\newtheorem{theorem}{Theorem}[section]
\newtheorem{lemma}[theorem]{Lemma}
\newtheorem{corollary}[theorem]{Corollary}
\newtheorem{proposition}[theorem]{Proposition}
\newtheorem{claim}[theorem]{Claim}
\newtheorem{remark}[theorem]{Remark}
\newtheorem{definition}[theorem]{Definition}
\newtheorem{example}[theorem]{Example}
\newtheorem{proposition:mi bruna}[theorem]{Proposition}
\newtheorem*{mthm}{Main Theorem}
\newtheorem*{lemaBNW}{Lemma 2.2 (Bruna - Nagel - Wainger)}
\newcommand{\re}{{\mathop{\rm Re }}}
\newcommand{\im}{{\mathop{\rm Im }}}
\newcommand{\sgn}{{\mathop{\rm sgn}}}
\def\scaleint#1{\vcenter{\hbox{\scaleto[3ex]{\displaystyle\int}{#1}}}}
\def\bs{\!\!\!\!\!}
\def\d{\partial}
\def\vec{\boldsymbol}
\def\cj{\overline}
\begin{document}

\title{An Application of John Ellipsoids to the Szeg\H{o} kernel on unbounded convex domains}

\author[Benguria]{Soledad Benguria}

\address{ Mathematics Department, University of Wisconsin - Madison, 480
Lincoln Dr, Madison, WI, USA}
\email{ benguria@math.wisc.edu}
\keywords{Szeg\H{o} kernel; John ellipsoids; unbounded convex domains}

\subjclass{32A25 primary; 30C40 secondary}

\begin{abstract}
We use convex geometry tools, in particular John ellipsoids, to obtain a size estimate for the Szeg\H{o} kernel on the boundary of a class of unbounded convex domains in $\mathbb{C}^n.$ Given a polynomial $b:\mathbb{R}^n \rightarrow \mathbb{R}$ satisfying a certain growth condition, we consider domains of the type $\Omega_b = \{ z\in\mathbb{C}^{n+1}\,:\, \im[z_{n+1}] > b(\re[z_1],\ldots,\re[z_n]) \}.$

\end{abstract}

\maketitle

\section{Introduction}

The study of the behavior of Szeg\H{o} kernels near the boundary of domains has been of great interest in the field of several complex variables during the past few decades. In this work we obtain a size estimate for the Szeg\H{o} kernel on the boundary of a class of unbounded domains in $\mathbb{C}^n.$ Our approach to this problem is the use of classical convex analysis techniques, and in particular an application of John ellipsoids.

\subsection{Background}

Given a strictly convex polynomial $b:\mathbb{R}^n\rightarrow\mathbb{R},$ consider the domain

$$\Omega_b = \{ (z_1,\ldots,z_{n+1})\in\mathbb{C}^{n+1}\,:\, \im[z_{n+1}]>b(\re[z_1],\ldots, \re[z_n])\}.$$

\noindent For unbounded domains of this type, it is convenient to define the Szeg\H{o} projection as in \cite{HaNaWa10}. We can identify the boundary $\d \Omega_b$ with $\mathbb{C}^{n}\times\mathbb{R}$ so that a point $ (\vec{z},t) \in  \mathbb{C}^{n}\times\mathbb{R}$ corresponds to $(\vec{z},t+ib(\re[z_1],\ldots, \re[z_n])) \in \d\Omega_b.$

\bigskip

Let $\mathcal{O}(\Omega_b)$ be the set of holomorphic functions in $\Omega_b.$ Given $F\in \mathcal{O}(\Omega_b)$ and $\epsilon>0,$ set 

$$ F_{\epsilon}(\vec{z},t) = F(\vec{z}, t + ib(\re[z_1],\ldots, \re[z_n])+i\epsilon).$$

\noindent The Hardy space $\mathcal{H}^2(\Omega_b)$ is defined as

\begin{equation}\label{eq:defh2}
 \mathcal{H}^2(\Omega_b) = \left\{ F \in\mathcal{O}(\Omega_b)\,:\, \sup_{\epsilon>0}\int_{\mathbb{C}^n\times\mathbb{R}}|F_{\epsilon}(\vec{z},t)|^2\,d\vec{z}\,dt  \equiv ||F||_{\mathcal{H}^2}^2 <\infty\right\}.
 \end{equation}

\bigskip

Let $\rho(z_1,\ldots,z_{n+1})=b(\re[z_1],\ldots, \re[z_n])- \im[z_{n+1}]$ be a defining function for the domain, i.e.,  $\Omega_b = \{ \vec{z} \in \mathbb{C}^{n+1} \,:\, \rho(\vec{z})<0\}$ where $\rho\in C^\infty(\mathbb{C}^{n+1})$ is such that $\nabla \rho \not= 0$ when $\rho=0.$ A Cauchy-Riemann operator is an operator of the form $ L = \sum_{j=1}^{n+1} a_j \frac{\d}{\d \overline{z_j}}.$ We say that $L$ is tangential if in addition $ L(\rho) = 0.$

\bigskip

 For a class of convex polynomials $b:\mathbb{R}^n\rightarrow \mathbb{R}$ satisfying a certain growth condition, we will define the Szeg\H{o} projection $\Pi:L^2(\d\Omega_b)\rightarrow \mathcal{H}^2(\Omega_b)$ to be the orthogonal projection from $L^2(\d\Omega_b)$ to the closed subspace of functions $f\in L^2(\d\Omega_b)$ that are annihilated in the sense of distributions by all tangential Cauchy-Riemann operators on $\d\Omega_b.$ For a thorough discussion on why such a map is well-defined, refer to Appendix B on  \cite{Pe15}. It can be shown that the Szeg\H{o} projection is given by integration against a kernel. That is, 

$$ \Pi [f](z) = \int_{\partial\Omega_b} S(z,w) f(w) d\sigma(w),$$ 

\noindent where $d\sigma$ is an appropriate measure on $\d\Omega_b$  (defined below, just before the statement of the Main Theorem). Here, $S(z,w),$ is called the Szeg\H{o} kernel and is the object we study.

\bigskip

The growth condition that we will impose on the polynomials $b$ on this paper is the following.

\begin{definition}\label{def:``combined degree''}
Let $m_1,\ldots,m_n$ be positive integers. We will say that a polynomial $p:\mathbb{R}^n \rightarrow \mathbb{R}$ is of {\bf combined degree} $(m_1,\ldots,m_n)$ if it is of the form 

$$p(\vec{x}) = \sum_{\alpha} c_{\alpha}\vec{x}^{\alpha},$$

\noindent where the exponents of its pure terms of highest order are $2m_1,\ldots,2m_n$ respectively and each index $\alpha = (\alpha_1,\ldots,\alpha_n)$ satisfies 

\begin{enumerate}
\medskip

\item {$ \dfrac{\alpha_1}{2m_1} + \ldots +\dfrac{\alpha_n}{2m_n} \le 1;$}
\medskip

\item {$ \dfrac{\alpha_1}{2m_1} + \ldots +\dfrac{\alpha_n}{2m_n} = 1$ if and only if there exists some $j$ such that $\alpha_j = 2m_j.$}

\end{enumerate}

\end{definition}

\bigskip

\begin{example}
\medskip
\noindent The polynomial $p(x_1,x_2) = x_1^2 +x_1x_2+ x_1^2x_2^2 +x_1^4 + x_2^6$ is of combined degree $(2,3).$ However, the polynomial  $\tilde{p}(x_1,x_2) = x_1^2 +x_1x_2+ x_1^2x_2^3 +x_1^4 + x_2^6$ is not a polynomial of combined degree. 
\end{example}

\bigskip

Throughout the rest of this work we will assume that 

\begin{equation*}\label{eq:pr}
\Omega_b = \{ (z_1,\ldots,z_{n+1})\in\mathbb{C}^{n+1}\,:\, \im[z_{n+1}]>b(\re[z_1],\ldots, \re[z_n])\},
\end{equation*}

\noindent where $b:\mathbb{R}^n\rightarrow\mathbb{R}$ is a strictly convex polynomial of combined degree $(m_1,\ldots,m_n).$ We will also identify $\partial \Omega_b$ with $\mathbb{R}^n\times \mathbb{R}^n\times \mathbb{R}.$ That is, given $(\vec{z},z_{n+1})\in \mathbb{C}^{n+1},$  we write $\vec{z} = \vec{x}+i\vec{y},$ and denote a point on $\partial \Omega_b$ by using the notation $(\vec{x},\vec{y},t),$ where $t =  \re[z_{n+1}].$ As is \cite{HaNaWa10}, and to avoid degeneracy issues due to the unboundedness of the domain, we take Lebesgue measure $d\sigma= d\vec{x} d\vec{y} dt$ as the measure on the boundary. 

\bigskip

We obtain the following size estimate for the Szeg\H{o} kernel on the boundary of $\Omega_b:$ 

\bigskip

\begin{mthm}

Let $(\vec{x},\vec{y},t)$ and $(\vec{x'},\vec{y'},t')$ be any two points in $\partial\Omega_b.$ Define $\tilde{b}(\vec{v}) = b\left( \vec{v} +\frac{\vec{x}+\vec{x'}}{2}\right) - \nabla b\left(\frac{\vec{x}+\vec{x'}}{2}\right)\cdot\vec{v} - b\left(\frac{\vec{x}+\vec{x'}}{2}\right);$ $\delta(\vec{x},\vec{x'}) = b(\vec{x}) +b(\vec{x'}) -2b\left(\frac{\vec{x}+\vec{x'}}{2} \right);$ and $w = (t'-t)  + \nabla b \left(  \frac{\vec{x}+\vec{x'}}{2}\right) \cdot (\vec{y'}-\vec{y}).$ Then

\begin{equation}\label{eq:bound principal}
\left\vert S\left((\vec{x},\vec{y},t);(\vec{x'},\vec{y'},t')\right)\right\vert \le \frac{C}{\sqrt{\delta^2+ \widetilde{b}(\vec{y}-\vec{y'})^2+ w^2 }\left| \left\{ \vec{v} \,:\, \tilde{b}(\vec{v}) < \sqrt{\delta^2+ \widetilde{b}(\vec{y}-\vec{y'})^2+ w^2 }\right\} \right|^2}.
\end{equation}

\noindent Here the constant $C$ depends on the exponents $\{m_1,\ldots,m_n\}$ and the dimension of the space, but is otherwise independent of the choice of $b$ and of the two given points.

\end{mthm}

\begin{remark}
The condition that $b$ is of combined degree is sufficient to ensure the existence of estimates as above. However, similar results could probably be obtained under weaker assumptions (e.g., a finite-type assumption).  The methods we use in this work, though, rely heavily on the combined degree hypothesis. New methods would have to be developed to generalize this result. In particular, the proof of Claims \ref{cl:bound g sin coefs en 2 dim} and \ref{cl:forma cuadratica} would need to be adapted. The result of these claims (especially of the first one) are used repeatedly throughout the paper. The fact that $b$ is a polynomial function also plays an important role throughout this work (see, e.g., Claims \ref{cl:porte de v0 gen},
 \ref{cl:bound de las derivadas}, and \ref{cl:simetria}). 

\end{remark}

\bigskip

For {\it bounded} domains $\Omega$ the Szeg\H{o} kernel has been extensively studied. Among others, the works by Gindikin \cite{Gi64}, Fefferman \cite{Fe74}, Boutet de Monvel and Sj{\"o}strand \cite{BoSj75}, Phong and Stein \cite{PhSt77}, Boas \cite{Bo85, Bo87}, Christ \cite{Ch88}, Fefferman, Kohn, Machedon  \cite{FeKoMa90}, Machedon \cite{Ma88}, McNeal and Stein \cite{McSt97}, Lanzani and Stein \cite{LaSt04} \cite{LaSt13}, and Krantz \cite{Kr80} \cite{Kr14}, have given great insight on the behavior of the Szeg\H{o} kernel and the Szeg\H{o} projection. Much less is known about the Szeg\H{o} kernel on {\it unbounded} domains $\Omega.$ Rather than broad general results, only particular domains have been studied in the latter case. 
\bigskip

When studying the Szeg\H{o} kernels for unbounded domains, domains of the type $\Omega = \{ (\vec{z},z_{n+1})\in\mathbb{C}^{n}\times\mathbb{C}\,:\, \im[z_{n+1}]>\phi(\vec{z})\}$ for different choices of functions $\phi$ have been of particular interest. In fact, when $n=1$ the Szeg\H{o} projection and kernel on such domains have been extensively studied. For example, Greiner and Stein \cite{GrSt78} obtain a closed formula for the Szeg\H{o} kernel for $\phi(z) = |z|^{2k}, k\in\mathbb{N}.$  The singularities of the Szeg\H{o} kernel have been studied by Haslinger \cite{Ha95} when $\phi(z)= |\re[z]|^{\alpha},$ $\alpha>\frac{4}{3};$  by Carracino \cite{Ca07} for a particular choice of a non-convex $\phi$; by Gilliam and Halfpap \cite{GiHa11}, \cite{GiHa14} when $\phi$ is a non-convex even degree polynomial with positive leading coefficient; and by Halfpap, Nagel and Wainger \cite{HaNaWa10}, who consider, among others, functions such that $\phi(z) = \exp(-|z|^{-a}),$ $a>0,$ for $|z|$ small and $\phi(z) = z^{2m}$ for $|z|$ large. Nagel \cite{Na86} studies the Szeg\H{o} kernel on the boundary of domains of the kind $\Omega = \{ \vec{z} \in \mathbb{C}^{2} : {\rm Im}[z_{2}] > \phi({\rm Re}[z_1])\},$ where $\phi$ is a subharmonic, non-harmonic polynomial with the property that $\Delta \phi(z) = \Delta\phi(x+iy) $ is independent of $y.$ He shows that in this case the Szeg\H{o} kernel is bounded by $|B|^{-1},$ where $|B|$ is the volume of a certain non-isotropic ball. Nagel's result was later generalized by Nagel, Rosay, Stein and Wainger \cite{NaRoStWa88} \cite{NaRoStWa89} to domains where $\phi$ is a subharmonic, non-harmonic polynomial in $\mathbb{C}.$ Furthermore, they obtain similar estimates for the derivatives of the Szeg\H{o} kernel, and use these to obtain $L^p$ bounds for the Szeg\H{o} projection. See also \cite{Ma88} for a related problem. More recently, Peterson \cite{Pe15} considered domains where $\phi$ is a smooth, subharmonic, nonharmonic function and $\d\Omega$ satisfies a uniform finite-type hypothesis of order $m.$

\bigskip

Less progress has been made in the case $n>1.$ See, however, \cite{FrHa95}, and the references therein.  In \cite{FrHa95}, Franciscs and Hanges take $\phi(\vec{z},\vec{\xi}) = ||\vec{z}||^2 +||\xi||^{2p}$ for $\vec{z}\in\mathbb{C}^n,$ $\xi\in\mathbb{C}^m,$ $n\ge0,$ $m\ge 1$ and $p$ a positive integer, and obtain a closed formula for the Szeg\H{o} kernel. 

\bigskip 
It was the goal of obtaining similar results to those of Nagel \cite{Na86}, but for $n>1,$ that led me to work on the problem at hand. One of the main difficulties of the problem in several dimensions stems from the fact that the polynomial $b$ we consider can exhibit different growth rates along different directions. This is where the John ellipsoids come into play, allowing one to introduce a rescaling that takes care of this issue. 

\bigskip 

In \cite{RaTi15}, Raich and Tinker study a similar problem. They consider domains $\Omega = \{ (z,\vec{w})\in\mathbb{C}\times\mathbb{C}^n\,:\, \im[\vec{w}]=P(\re[z])\},$ where  $P=(a_1p,\ldots,a_np)$ with $p :\mathbb{R} \rightarrow \mathbb{R}$ a convex polynomial, $a_n=1,$  and $\vec{a} = (a_1,\ldots,a_n) \in \mathbb{R}^n$. They obtain a bound for the Szeg\H{o} kernel and its derivatives in terms of the volume of a ball defined by a certain pseudometric, as well as an explicit formula for the Szeg\H{o} kernel when $p(x)=x^2.$   

\bigskip

\subsection{Methods}

Except for the setup of the problem, which is outlined in Section \ref{prelim}, all the methods we use are classical convex analysis techniques. In fact, an application of John ellipsoids is the key ingredient in the proof of the Main Theorem. 

\bigskip

In Section \ref{prelim} we derive an integral formula for the Szeg\H{o} kernel. Our estimates all follow from a study of this integral expression, given by

\begin{equation}\label{eq:integral 1}
S((\vec{x},\vec{y},t);(\vec{x'},\vec{y'},t')) = 
{\scaleint{8ex}}_{\bs 0}^{\infty} e^{-2\pi\tau [b(\vec{x'}) + b(\vec{x}) + i(t'-t)]} 
\left({\scaleint{8ex}}_{\bs \mathbb R^n}  \frac{e^{2\pi \vec{\eta} \cdot [\vec{x}+\vec{x'} - i(\vec{y'}-\vec{y})] }   }{\displaystyle\int \limits_{\mathbb R^n} e^{4\pi [\vec\eta \cdot \vec{v}  - b(\vec{v})\tau]} \, d\vec{v}} \, d\vec{\eta} \,\right) d\tau.
\end{equation}

In Sections \ref{coefs}  and \ref{denominador} we build the tools that we use in the proof of the Main Theorem, which is presented in Section \ref{main theorem}. We devote Section \ref{coefs} to a study of the coefficients of convex polynomials in several variables. In the one-variable case it was shown in \cite{BrNaWa88} that the absolute value of the coefficients of a convex polynomial with no constant or linear terms can be bounded, up to a constant that depends only on the degree of the polynomial, by the value of the polynomial at $1.$ It is not possible to obtain such a bound in more variables, since the polynomial might be growing in some directions but not along others. However, we show that the absolute value of the coefficients can be bounded by the average of the polynomial over a circle of arbitrary positive radius, up to a constant that only depends on the degree of the polynomial and the chosen radius. In Section \ref{denominador} we use this result to prove a technical lemma that will be key in dealing with the denominator integral of equation (\ref{eq:integral 1}). 

\bigskip

In Section \ref{main theorem} we present the proof of the Main Theorem. The proof is, at its core, an application of John ellipsoids. We introduce a change of variables in the integral expression for the Szeg\H{o} kernel comprised of factors defined by the length of the axes of the unique maximal inscribed ellipsoid associated to a symmetrization of the convex body $R=\left\{ \vec{v} \,:\, \tau\tilde{b}(\vec{v}) \le 1 \right\}.$  The construction of these factors is presented in Section \ref{mus}, and it is from the product of these factors (which appear in the denominator as the Jacobian of the change of variables), that the volume expression in the estimate given in equation (\ref{eq:bound principal}) is obtained. 

\bigskip 

So as to make the computations simpler, we have split the proof of the Main Theorem into the proof of three separate bounds. The first bound, in terms of $\delta,$ is given in Section \ref{delta}; the second bound, in terms of $\widetilde{b}(\vec{y}-\vec{y'}),$ is presented in Section \ref{segundo}; and the bound in terms of $w$ is given in Section \ref{tercer}. These bounds are then combined to yield the estimate of the Main Theorem in Section \ref{conclusion}.

\bigskip 

Geometric tools, such as the ones we employ in this paper, have often been used in the study of the Szeg\H{o} kernel. For example, similar geometric ideas are used by McNeal and Stein \cite{McSt97} to obtain a bound for the Szeg\H{o} kernel $S(z,w)$ for smoothly bounded convex domains $\Omega$ of finite type in $\mathbb{C}^n$ in terms of the smallest {\it tent} in $\partial\Omega$ containing $z$ and $w$ (see also \cite{Mc94}). That is, they show that for smoothly bounded convex domains of finite type in $\mathbb{C}^n,$ there exists a constant $C$ so that for all $z,w\in\overline{\Omega}\times\overline{\Omega} \setminus \{\text{diagonal in } \d\Omega\},$

$$ |S(z,w)| \le \frac{C}{|T(z,\gamma)|}.$$

\noindent Here $T(z,\gamma) = P_{\gamma}(\pi(z))\cap\overline{\Omega};$ the projection $\pi:U\rightarrow\d\Omega$ is a smooth map such that if $b\in\d\Omega,$ $\pi(b)=b$ and $\pi^{-1}(b)$ is a smooth curve, transversally intersecting $\d\Omega$  at $b;$ and $\gamma =|r(z)|+|r(w)| + \inf\{\epsilon>0\,:\, w\in T(z,\epsilon)\}.$ The geometric constructions used in \cite{Mc94} and \cite{McSt97} are based on the length of the sides of a certain polydisc, as opposed to the lengths of the axes of an ellipsoid. The use of John ellipsoids, however, seems more natural for the domains we consider. In fact, one of the key components of our proof is the use of universal bounds for the coefficient of convex polynomials in terms of the average of the polynomials over circles of arbitrary positive radius (as described above, and in more detail in Section \ref{coefs}). Thus, it makes sense to consider ellipsoids (which can be rescaled into spheres), rather than polydiscs. The particular tools we employ (i.e, the approximation by John ellipsoids) have not been used before in this context and provide a new approach to the problem.

\bigskip

\section{Preliminaries}\label{prelim}

 In this section we derive an integral formula for the Szeg\H{o} kernel for the domains under consideration. We follow the analogous derivation for the one-dimensional case found in \cite{Na86}.
 
 \bigskip
\begin{proposition}\label{prop:integral szego1}
The Szeg\H{o} kernel on the boundary of domains of the type $\Omega_b = \{ \vec{z} \in \mathbb{C}^{n+1} : {\rm Im}[z_{n+1}] >  b({\rm Re}[z_1],\ldots,{\rm Re}[z_n])\}$ where $b:\mathbb{R}^n\rightarrow\mathbb{R}$ is a convex polynomial of combined degree is given by

\begin{equation}\label{eq:integral original2}
S((\vec{x},\vec{y},t);(\vec{x'},\vec{y'},t')) = 
{\scaleint{8ex}}_{\bs 0}^{\infty} e^{-2\pi\tau [b(\vec{x'}) + b(\vec{x}) + i(t'-t)]} 
\left({\scaleint{8ex}}_{\bs \mathbb R^n}  \frac{e^{2\pi \vec{\eta} \cdot [\vec{x}+\vec{x'} - i(\vec{y'}-\vec{y})] }   }{\displaystyle\int \limits_{\mathbb R^n} e^{4\pi [\vec\eta \cdot \vec{v}  - b(\vec{v})\tau]} \, d\vec{v}} \, d\vec{\eta} \,\right) d\tau, 
\end{equation}

\noindent where $(\vec{x},\vec{y},t)$ and $(\vec{x'},\vec{y'},t')$ are any two points on $\d\Omega_b.$

\end{proposition}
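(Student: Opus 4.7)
The plan is to mirror the one-dimensional derivation of Nagel by moving everything to the Fourier side in the tangential variables $(\vec{y},t)$, and then recognize the Szeg\H o projection as a family of one-dimensional orthogonal projections in the $\vec{x}$ variable. Concretely, identify $\partial\Omega_b\simeq\mathbb{R}^n_{\vec{x}}\times\mathbb{R}^n_{\vec{y}}\times\mathbb{R}_t$ and, for $F\in\mathcal{O}(\Omega_b)$, consider its $\epsilon$-translate $F_\epsilon(\vec{x},\vec{y},t)=F(\vec{x}+i\vec{y},\,t+ib(\vec{x})+i\epsilon)$. Define the partial Fourier transform $\tilde F_\epsilon(\vec{x},\vec{\eta},\tau)$ in $(\vec{y},t)$.

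The key computation is to translate the holomorphicity of $F$ into a first-order ODE in $\vec{x}$ for $\tilde F_\epsilon$. Using chain rule, $\partial_{x_j}F_\epsilon=\partial_{z_j}F+ib_{x_j}(\vec{x})\,\partial_{z_{n+1}}F$, while $\partial_{y_j}F_\epsilon=i\partial_{z_j}F$ and $\partial_t F_\epsilon=\partial_{z_{n+1}}F$, so the Cauchy--Riemann equations force the tangential relation $\partial_{x_j}F_\epsilon+i\partial_{y_j}F_\epsilon-ib_{x_j}(\vec{x})\partial_t F_\epsilon=0$. Taking Fourier in $(\vec{y},t)$ turns this into $\partial_{x_j}\tilde F_\epsilon=2\pi(\eta_j-\tau b_{x_j}(\vec{x}))\tilde F_\epsilon$, whose solution is
\[
\tilde F_\epsilon(\vec{x},\vec{\eta},\tau)=e^{2\pi\vec{\eta}\cdot\vec{x}-2\pi\tau(b(\vec{x})+\epsilon)}\,h(\vec{\eta},\tau)
\]
for some $h$ independent of $\vec{x}$ and $\epsilon$. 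Applying Plancherel to the Hardy-space norm and invoking the hypothesis that $b$ is a strictly convex polynomial of combined degree (so that $b$ is coercive and the Gaussian-type integral in $\vec{x}$ below converges only when $\tau>0$) yields
\[
\|F\|_{\mathcal{H}^2}^2=\int_0^\infty\!\!\int_{\mathbb R^n}|h(\vec{\eta},\tau)|^2\!\left(\int_{\mathbb R^n}e^{4\pi(\vec{\eta}\cdot\vec{v}-\tau b(\vec{v}))}d\vec{v}\right)d\vec{\eta}\,d\tau,
\]
so that $\mathcal{H}^2(\Omega_b)$ is isometric, via the map $F\leftrightarrow h$, to an $L^2$ space of functions supported in $\{\tau>0\}$ with the weight inside the parentheses.

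Next, I would identify the Szeg\H o projection on the Fourier side. The image of $\mathcal{H}^2$ inside the full $L^2(\partial\Omega_b)$-picture of $\tilde f(\vec{x},\vec{\eta},\tau)$ consists, for each fixed $(\vec{\eta},\tau)$ with $\tau>0$, of multiples of the single function $\phi_{\vec{\eta},\tau}(\vec{x})=e^{2\pi\vec{\eta}\cdot\vec{x}-2\pi\tau b(\vec{x})}$. Hence the orthogonal projection acts fiberwise as the one-dimensional projection
\[
P\tilde f(\vec{x},\vec{\eta},\tau)=\frac{\displaystyle\int_{\mathbb R^n}\tilde f(\vec{x}',\vec{\eta},\tau)\,\phi_{\vec{\eta},\tau}(\vec{x}')\,d\vec{x}'}{\displaystyle\int_{\mathbb R^n}e^{4\pi(\vec{\eta}\cdot\vec{v}-\tau b(\vec{v}))}d\vec{v}}\;\phi_{\vec{\eta},\tau}(\vec{x}),\qquad\tau>0,
\]
and $P\tilde f\equiv 0$ for $\tau\le 0$. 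Unwinding $\tilde f$ as the Fourier transform of $f$ in $(\vec{y}',t')$, applying the inverse Fourier transform in $(\vec{\eta},\tau)$ to recover $\Pi f(\vec{x},\vec{y},t)$, and collecting exponentials produces an integral kernel of precisely the form stated in \eqref{eq:integral original2}: the factor $e^{-2\pi\tau[b(\vec{x}')+b(\vec{x})+i(t'-t)]}$ arises from combining $\phi_{\vec{\eta},\tau}(\vec{x})\phi_{\vec{\eta},\tau}(\vec{x}')$ with the Fourier phase $e^{2\pi i\tau(t-t')}$, while the $\vec{\eta}$-integrand assembles into $e^{2\pi\vec{\eta}\cdot[\vec{x}+\vec{x}'-i(\vec{y}'-\vec{y})]}$ divided by the denominator above.

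The main obstacle, and where I would spend most care, is justifying these manipulations rigorously: showing that Fubini applies to exchange the $d\vec{x}'$, $d\vec{\eta}$, $d\tau$ integrations; verifying that the denominator $\int e^{4\pi(\vec{\eta}\cdot\vec{v}-\tau b(\vec{v}))}d\vec{v}$ is finite and positive for all $(\vec{\eta},\tau)$ with $\tau>0$ (which follows from coercivity of a combined-degree polynomial, growing polynomially in every direction); and arguing that the representation $\tilde F_\epsilon=e^{2\pi\vec{\eta}\cdot\vec{x}-2\pi\tau(b(\vec{x})+\epsilon)}h$ forces $\mathrm{supp}\,h\subset\{\tau>0\}$ for $F\in\mathcal{H}^2$. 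Aside from these analytic details, the algebra is straightforward, and the fact that this is a density/rank-one computation on each fiber makes the final extraction of the kernel formula essentially a bookkeeping exercise.
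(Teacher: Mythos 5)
Your proposal is correct and follows essentially the same route as the paper: pass to the partial Fourier transform in $(\vec{y},t)$, use the tangential CR relation (equivalently, holomorphicity of $F$) to pin down the fiberwise one-dimensional structure, compute the rank-one projection onto $\phi_{\vec{\eta},\tau}(\vec{x})=e^{2\pi\vec{\eta}\cdot\vec{x}-2\pi\tau b(\vec{x})}$ for $\tau>0$, and unwind to extract the kernel. The only cosmetic difference is that the paper packages the reduction as an explicit conjugation $\Pi=\mathcal{F}^{-1}\mathcal{M}^{-1}\widehat\Pi\,\mathcal{M}\mathcal{F}$ (projecting onto constants in a weighted $L^2$), whereas you characterize the Fourier transforms of $\mathcal{H}^2$-functions directly; the analytic caveats you flag (Fubini, convergence of the denominator, and the Paley--Wiener step forcing $\supp h\subset\{\tau>0\}$) are exactly the points the paper also handles, partly by citation.
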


\begin{proof}

Let 

\begin{equation}\label{eq:def function en m}\begin{split}
 \rho (z_1,\ldots ,z_{n+1}) = & \,b({\rm Re}[z_1],\ldots, {\rm Re}[z_n]) - {\rm Im}[z_{n+1}]\\
 = &\,b\left(\frac{z_1 + \cj{z}_1}{2}, \ldots, \frac{z_n + \cj{z}_n}{2}\right)\,  - \, \frac{z_{n+1} - \cj{z}_{n+1}}{2i}\\
\end{split}\end{equation}

\bigskip

\noindent be a defining function for our domain. The Szeg\H{o} projection is the orthogonal projection $\Pi : L^2(\d \Omega_b) \rightarrow H^2(\d \Omega_b).$ It can be shown (see, e.g., \cite{HaNaWa10}, \cite{Be14}), that $H^2(\d\Omega_b)$ as defined in equation (\ref{eq:defh2}) is equivalent to the space 

$$\{ f\in L^2(\d\Omega_b) : \overline{Z}(f) = 0 \,\,\text {as a distribution, for all }\overline{Z} \in T^{0,1}(\partial \Omega_b)\}. $$

 \noindent We begin by finding a base for the tangential Cauchy-Riemann operators. We can let 

$$ \cj{Z}_j = 2 \left(\frac{\d}{\d \cj{z}_j} + A_j(z_1,\ldots,z_n)\frac{\d}{\d \cj{z}_{n+1}} \right)\qquad j=1, \ldots, n.$$ 

\noindent For these operators to be tangential they must satisfy $\cj{Z}_j(\rho) = 0.$ Thus, 

$$ \cj{Z}_j = 2\left(\frac{\d}{\d \cj{z}_j} - i \frac{\d b}{\d x_j}(\vec{x})\frac{\d}{\d \cj{z}_{n+1}} \right) \qquad j=1,\ldots,n $$  are a basis for the space of tangential Cauchy-Riemann operators for our domain in $\mathbb{C}^{n+1}$.  We can identify $\d \Omega_b$ with $\mathbb{C}^n \times \mathbb{R}$ via the diffeomorphism

$$ (z_1,\ldots,z_n,t) \in \mathbb{C}^n \times \mathbb{R} \leftrightarrow (z_1,\ldots,z_n,t+ib({\rm Re}[z_1],\ldots, {\rm Re}[z_n])) \in \d \Omega_b .$$

\bigskip

\noindent Our operators $\cj{Z}_j$ are operators in $\mathbb{C}^{n+1} $. The pushforward of these operators to $\mathbb{C}^n \times \mathbb{R}$ is

\begin{equation}\label{eq:CR1 en m}
\cj{Z}_j = \frac{\d}{\d x_j} + i \left( \frac{\d}{\d y_j} - \frac{\d b}{\d x_j}(\vec{x}) \frac{\d}{\d t}\right).
\end{equation}

\bigskip

\begin{lemma} Given $\vec{x}\in \mathbb{R}^n,$ $\vec{\eta}\in \mathbb{R}^n,$ and $\tau \in \mathbb{R},$ let 

$$ \mathcal{M}[g](\vec{x},\vec{\eta},\tau) = e^{-2\pi[\vec{\eta}\cdot \vec{x} - b(\vec{x}) \tau]}g(\vec{x},\vec{\eta},\tau),$$
 and define the partial Fourier transform 
 $$\mathcal{F}[f](\vec{x},\vec{y},\tau) = \hat{f}(\vec{x},\vec{\eta},\tau) = \int\limits_{\mathbb{R}^{n+1}}  e^{-2\pi i (\vec{y}\cdot \vec{\eta} +t\tau)}f(\vec{x},\vec{y},t)\,d\vec{y} \, dt.$$
 
\noindent Then $$ \mathcal{M} :L^2(\mathbb{R}^{2n+1}, d\vec{x}d\vec{\eta}\,d\tau\,) \rightarrow L^2(\mathbb{R}^{2n+1}, e^{4\pi[\vec{\eta}\cdot\vec{x}  -b(\vec{x})\tau]} d\vec{x}\,d\vec{\eta}\,d\tau\,   ) $$ is an isometry for $f\in L^2(\mathbb{R}^{2n+1}, d\vec{x}d\vec{\eta}\,d\tau\,),$ and
 
 $$ \cj{Z}_j[f] = \mathcal{F}^{-1} \mathcal{M}^{-1} \frac{\partial}{\partial x_j} \mathcal{M} \mathcal{F}[f] \qquad j=1,\ldots n.$$
 
\end{lemma}

\begin{proof}

It is easy to check that $\mathcal M$ is an isometry in this weighted $L^2$ space. Also,

\begin{equation*}
\begin{split} \cj{Z}_j[f] = &\, \cj{Z}_j \mathcal{F}^{-1}(\hat{f}) \\ = &\, \displaystyle\int\limits_{\mathbb{R}^{n+1}} e^{2\pi i (\vec{y}\cdot\vec{\eta} + t\tau)}\left(
\frac{\d \hat{f}(\vec{x},\vec{\eta},\tau)}{\d x_j} - 2\pi \eta_j \hat{f}(\vec{x},\vec{\eta},\tau)   +\frac{\d b}{\d x_j} (\vec{x})2\pi\tau \hat{f}(\vec{x},\vec{\eta},\tau) \right) \, d\vec{\eta} \, d\tau  \\  = & \, \displaystyle\int\limits_{\mathbb{R}^{n+1}} e^{2\pi i (\vec{y}\cdot\vec{\eta} +t\tau)}e^{2\pi [\vec{\eta}\vec{x}  -b(\vec{x})\tau]}\frac{\partial}{\partial x_j}\left(e^{-2\pi [\vec{\eta}\cdot \vec{x}  -b(\vec{x})\tau]} \hat{f}(\vec{x},\vec{\eta},\tau)\right) \,d\vec{\eta}\, d\tau \\ = &\, \mathcal{F}^{-1} \mathcal{M}^{-1} \frac{\partial}{\partial x_j} \mathcal{M} \mathcal{F}[f]. \\
\end{split}\end{equation*}

\end{proof}

 Since $\mathcal{F}$ and $\mathcal{M} $ are isometries, instead of projecting onto the null space of the tangential Cauchy-Riemann operators we can project onto the closed subspace of functions in $ L^2(\mathbb{R}^{2n+1}, e^{4\pi[\vec{\eta}\cdot\vec{x} -b(\vec{x})\tau]} \allowbreak d\vec{x}\,d\vec{\eta}\,d\tau\,   )$ which are a.e. constant in $x.$

 \bigskip
 
 More precisely, as in \cite{Na86}, let 
 
 $$\Sigma = \left\{ (\vec{\eta},\tau)\in \mathbb{R}^{n+1}\, \left| \,\,\,\displaystyle\int\limits_{\mathbb{R}^n} e^{4\pi[\vec{\eta}\cdot \vec{x}  - b(\vec{x})\tau]}\, d\vec{x} <\infty \right. \right\}.$$
 
 \noindent Then, because of the growth hypothesis on $b,$ 
 
 $$\Sigma = \{  (\vec{\eta},\tau)\in \mathbb{R}^{n+1} \,| \,\tau > 0\}.$$ 
 
 \noindent This follows from the fact that $b$ is positive and grows at least quadratically in all directions. 
 
 \bigskip 
 
 Let $\widehat\Pi_{\vec{\eta},\tau} $ be the projection of $L^2(\mathbb{R}^n,e^{4\pi[\vec{\eta}\cdot\vec{x} -b(\vec{x})\tau]}\, d\vec{x})$ onto the constants if $ (\vec{\eta},\tau) \in \Sigma,$ and let $\widehat\Pi_{\vec{\eta},\tau} =0$ otherwise. Then, if $(\vec{\eta},\tau)\in \Sigma,$ 

\begin{equation*}\begin{split}  \widehat\Pi_{\vec{\eta},\tau} g = \,&\, \frac{<g,1>1}{<1,1>}   =  \scaleint{7ex}_{\bs \mathbb{R}^n} g(\vec{x'})\left(\,\,\,\frac{ e^{4\pi [\vec{\eta}\cdot\vec{x'} - b(\vec{x'})\tau]} \,}{\displaystyle\int_{\mathbb{R}^n} e^{4\pi[\vec{\eta}\cdot \vec{v} - b(\vec{v})\tau]}\, d\vec{v}} \right) \, d\vec{x'}.\\ 
\end{split}\end{equation*}

 We define the projection 
 
 $$\widehat{\Pi}:  L^2(\mathbb{R}^{2n+1}, e^{4\pi[\vec{\eta}\cdot\vec{x} -b(\vec{x})\tau]} d\vec{x}\,d\vec{\eta}\,d\tau\,   ) \rightarrow  L^2(\mathbb{R}^{2n+1}, e^{4\pi[\vec{\eta}\cdot\vec{x} -b(\vec{x})\tau]} d\vec{x}\,d\vec{\eta}\,d\tau\,   )$$
 
 \noindent by

  $$\widehat{\Pi}\,g(\vec{x},\vec{\eta},\tau) = \widehat\Pi_{\vec{\eta},\tau}( g_{\vec{\eta},\tau})(\vec{x}),$$  
 
 \noindent where $g_{\vec{\eta},\tau}(\vec{x}) = g(\vec{x},\vec{\eta},\tau).$ Then $\Pi [f] = \mathcal{F}^{-1} \mathcal{M}^{-1} \widehat{\Pi} \mathcal{M} \mathcal{F}[f]$ is the projection from $L^2(\mathbb{R}^{2n+1}, d\vec{x}\,d\vec{y}\,dt\,)$ onto the null space of the tangential Cauchy-Riemann operators. Thus, if $f\in L^2(\mathbb{R}^{2n+1}, d\vec{x}\,d\vec{y}\,dt\,),$ the Szeg\H{o} projection is given by 

\allowdisplaybreaks\begin{align*}
 & \Pi [f](\vec{x},\vec{y},t) =  \mathcal{F}^{-1} \mathcal{M}^{-1} \widehat{\Pi} \mathcal{M} \mathcal{F}[f](\vec{x},\vec{y},t)  = \scaleint{8ex}_{\bs \mathbb{R}^{2n+1}} f(\vec{x'},\vec{y'},t')  S(   (\vec{x},\vec{y},t);  (\vec{x'},\vec{y'},t'))    d\vec{x'}\,d\vec{y'} \, dt',
 \\
\end{align*}

\noindent where the Szeg\H{o} kernel is given by

\begin{equation*}
S(   (\vec{x},\vec{y},t);  (\vec{x'},\vec{y'},t')) =   {\scaleint{7ex}_{\bs 0}^{\infty}} e^{-2\pi\tau [b(\vec{x'}) + b(\vec{x}) + i(t'-t)]} \left( {\scaleint{7ex}_{\bs \mathbb R^n}} \frac{e^{2\pi\vec{\eta}\cdot [\vec{x}+\vec{x'}- i(y'-y)] }   }{\displaystyle\int_{\mathbb R^n} e^{4\pi [\vec{\eta}\cdot \vec{v}  - b(\vec{v})\tau]} \, d\vec{v}}\, d\vec{\eta} \right) \, d\tau.
\end{equation*}

\noindent This finishes the proof of Proposition \ref{prop:integral szego1}.

\end{proof}

\bigskip
\bigskip

\section{Coefficients of convex polynomials of several variables}\label{coefs}

In this section we obtain bounds for the absolute value of the coefficients of convex polynomials of several variables with no constant or linear terms. 
\bigskip

\begin{theorem}\label{prop:mi bruna}
Let $\Gamma(M) = \{ (\alpha_1,\ldots,\alpha_n) \in \mathbb{N}^n\,:\, 2\le |\alpha|\le M\}.$ Let $S(M)$ be the set of convex polynomials of the form $g(\vec{v})= \sum_{\alpha\in\Gamma(M)}c_{\alpha}\vec{v}^{\alpha}.$ Then for any fixed $a>0,$ there exists a positive constant $C(M,a)$ that depends only on $M$ and the constant $a$ such that if $g\in S(M),$

\begin{equation}\label{eq:coeficientes acotados}
\sum_{\alpha\in\Gamma(M)}|c_{\alpha}| \le C(M,a) \int_{|\sigma|=a} g(\sigma) \,d\sigma.
\end{equation}

\noindent 
\end{theorem}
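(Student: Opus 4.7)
The plan is to argue by a standard compactness/homogeneity argument in the finite-dimensional vector space of polynomials of degree at most $M$ with no constant or linear terms.

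First, I would set up the ambient space. Let $V$ denote the finite-dimensional real vector space of all polynomials of the form $g(\vec{v}) = \sum_{\alpha \in \Gamma(M)} c_{\alpha}\vec{v}^{\alpha}$, and define two functionals on $V$:
\begin{equation*}
N_1(g) = \sum_{\alpha \in \Gamma(M)} |c_{\alpha}|, \qquad N_2(g) = \int_{|\sigma|=a} g(\sigma)\, d\sigma.
\end{equation*}
Clearly $N_1$ is a norm on $V$, and $N_2$ is a linear functional on $V$. The goal reduces to showing $N_1(g) \le C(M,a)\, N_2(g)$ for every $g \in S(M)$.

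Next I would record two structural facts about $S(M)$. First, $S(M)$ is a closed convex cone inside $V$: convexity of polynomials is preserved under pointwise limits and under multiplication by positive scalars. Second, every $g \in S(M)$ is nonnegative: since $g(\vec 0) = 0$ and $\nabla g(\vec 0) = \vec 0$, convexity gives $g(\vec v) \ge g(\vec 0) + \nabla g(\vec 0)\cdot \vec v = 0$ for all $\vec v$. In particular $N_2(g) \ge 0$ on $S(M)$.

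Now the key point: $N_2$ vanishes on $g \in S(M)$ only when $g \equiv 0$. Indeed, if $g \ge 0$ and $\int_{|\sigma|=a} g(\sigma)\,d\sigma = 0$, then $g \equiv 0$ on the sphere $\{|\sigma| = a\}$. By convexity together with $g(\vec 0) = 0$, the function vanishes on every radial segment from the origin to that sphere, hence on the entire closed ball of radius $a$. A polynomial vanishing on an open set vanishes identically, so $g \equiv 0$. This shows that $N_2$ is strictly positive on the set
\begin{equation*}
K = \{g \in S(M) : N_1(g) = 1\}.
\end{equation*}
The set $K$ is the intersection of the closed cone $S(M)$ with the unit sphere of the norm $N_1$ in the finite-dimensional space $V$, so $K$ is compact. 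Since $N_2$ is continuous and strictly positive on $K$, it attains a positive minimum $m > 0$. For any nonzero $g \in S(M)$ we have $g/N_1(g) \in K$, so $N_2(g)/N_1(g) = N_2(g/N_1(g)) \ge m$, which gives the desired inequality with $C(M,a) = 1/m$.

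The only real content is verifying that $N_2$ does not degenerate on $S(M)$; everything else is linear algebra and compactness. The main potential obstacle would be the nonnegativity-plus-convexity trick used to push vanishing on a sphere to vanishing on a ball, but this is immediate once one notes that $\vec 0$ is a minimum of $g$. The constant produced is not explicit, but the statement only asks for existence of $C(M,a)$ depending on $M$ and $a$, which is exactly what the compactness argument delivers.
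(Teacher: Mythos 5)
Your proof is correct and follows essentially the same compactness argument as the paper: identify $S(M)$ with a closed cone in a finite-dimensional space, restrict to the $\ell^1$-unit sphere to get a compact set, show the spherical-average functional is continuous and strictly positive there, and conclude by taking the minimum and rescaling. Your verification that the integral functional is strictly positive (pushing vanishing on the sphere to vanishing on the whole ball via convexity and $g(\vec{0})=0$, then invoking the identity theorem for polynomials) is if anything a bit more careful than the paper's corresponding step.
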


\begin{remark}
This is a generalization of the result in one variable by Bruna, Nagel and Wainger in \cite{BrNaWa88} (Lemma 2.1). 
\end{remark}

\begin{proof}
Let $a>0$ be a fixed positive constant and let $|\Gamma(M)|$ denote the cardinality of the set of indices $\Gamma(M).$ We identify the space $S(M)$ with a cone in $\mathbb{R}^{|\Gamma(M)|}$ via the identification

$$ g(\vec{v}) = \sum_{\alpha\in\Gamma(M)}c_{\alpha}\vec{v}^{\alpha}\in S(M) \leftrightarrow (c_1,\ldots,c_{|\Gamma(M)|})\in\mathbb{R}^{|\Gamma(M)|},$$

\noindent where $c_j$ corresponds to the coefficient $c_{\alpha}$ for the $j^{th}$ element $\alpha$ in $\Gamma(M),$ under some fixed but arbitrary ordering of $\Gamma(M).$

\bigskip

 Let 

\begin{equation}\label{eq:definicion de sigma}
\Sigma_M = \{ g(\vec{v})=\sum_{\alpha\in\Gamma(M)}c_{\alpha}\vec{v}^{\alpha}\in S(M) \,:\, \sum_{\alpha\in\Gamma(M)}|c_{\alpha}| = 1 \}.
\end{equation}

\bigskip

\noindent We claim that $\Sigma_M$ is a compact subset of the cone $S(M).$ In fact, let $\{\vec{c_n}\}_{n\in\mathbb{N}}$ in $\mathbb{R}^{|\Gamma(M)|}$ be a sequence of tuples associated to a sequence of polynomials $\{q_n\}_{n\in\mathbb{N}}$ in $\Sigma_M.$ Since $\{\vec{c_n}\}_{n\in\mathbb{N}}$ is a sequence contained in the compact set $B_M =  \{  (c_1,\ldots,c_{|\Gamma(M)|})\in\mathbb{R}^{|\Gamma(m)|}\,:\,   \sum_{1\le j\le |\Gamma(M)|}|c_j| = 1  \},$ it has a convergent subsequence $\{\vec{c_{n_i}}\}_{n_i\in\mathbb{N}}$ . Let $\vec{c}$ be the limit of this subsequence, and let $q$ be the polynomial associated to this tuple. We claim that $q$ is an element of $\Sigma_M.$ In fact, the identification preserves the degree of the polynomial and the fact that there are no constant or linear terms. Also, since $\vec{c}$ is an element of $B_M,$ it satisfies that $\sum_{1\le j\le |\Gamma(M)|}|c_j| = 1.$ Thus, is suffices to show that $q$ is convex. This follows easily, since given any polynomial $q_{n_i}$ associated to an element of the convergent subsequence $\{\vec{c_{n_i}}\}_{n_i\in\mathbb{N}},$ we have that $ q_{n_i}(\alpha\vec{x}+(1-\alpha)\vec{y})\le \alpha q_{n_i}(\vec{x}) + (1-\alpha)q_{n_i}(\vec{y})$ for all $0\le\alpha\le 1$ and for all points $\vec{x},\vec{y}$ in $\mathbb{R}^n.$ Thus, and since $q_{n_i}(\alpha\vec{x}+(1-\alpha)\vec{y})\rightarrow q(\alpha\vec{x}+(1-\alpha)\vec{y});$ $\alpha q_{n_1}(\vec{x}) \rightarrow \alpha q(\vec{x});$ and $(1-\alpha)q_{n_i}(\vec{y})\rightarrow (1-\alpha)q(\vec{y}),$ the convexity of $q$ follows immediately.

\bigskip 

 Let 
 $$\Phi_I(g) = \frac{1}{\omega_n(a)}\int_{|\sigma|=a} g(\sigma) \,d\sigma,$$
\noindent where $\omega_n(a)$ is the surface area of the sphere of radius $a$ in $\mathbb{R}^n$ and 
$$\Phi_{II}(g) = \sum_{\alpha\in\Gamma(M)}|c_{\alpha}|.$$
\noindent Notice that these functions are continuous on $S(M),$ and that $\Phi_{II}(g) = 1$ on $\Sigma_M.$ 

\bigskip

 We claim that $\Phi_I(g)$ is strictly positive on $\Sigma_M.$ In fact, since $g$ is convex, $g(\vec{0}) = 0$ and $\nabla g(\vec{0}) = \vec{0}$ it follows that $g$ is nonnegative. Moreover, on $\Sigma_M$ at least one of the coefficients of $g$ must be different from zero, so $g$ can not be the zero polynomial. Thus $g$ must be positive almost everywhere. In particular, the average over the circle of radius $a$ must be strictly positive. 

\bigskip

 Therefore, and since $\Phi_I(g)$ is continuous as a function of $g,$ it attains a minimum in $\Sigma_M,$ and this minimum is strictly positive. Thus, and since $\Phi_{II}(g) = 1$ on $\Sigma_M,$ there exists a constant $C>0$ such that for any $g\in\Sigma_M,$

$$ \Phi_I(g) \ge C = C\Phi_{II}(g).$$ 

\noindent That is, 
$$ \frac{1}{\omega_n(a)}\int_{|\sigma|=a} g(\sigma) \,d\sigma \ge C \Phi_{II}(g) = C \sum_{\alpha\in\Gamma(M)}|c_{\alpha}|,$$

\noindent as desired. 

\bigskip

 Consider now a polynomial $g(\vec{v})=\sum_{\alpha\in\Gamma(M)}c_{\alpha}\vec{v}^{\alpha}\in S(M),$ but which is not necessarily in $\Sigma_M.$ Then let $ h(\vec{v}) = \sum_{\alpha\in\Gamma(M)} b_{\alpha}\vec{v}^{\alpha}$ where 

$$b_{\alpha} = \frac{c_{\alpha}}{\sum_{\beta\in\Gamma(M)}|c_{\beta}|}$$ 

\noindent so that $\sum_{\alpha\in\Gamma(M)} |b_{\alpha}| = 1$ and $h(\vec{v})\in\Sigma_M.$ It follows from the previous case that 

$$ \frac{1}{\omega_n(a)}\int_{|\sigma|=a} h(\sigma) \,d\sigma \ge C.$$ 

\noindent That is, 

$$ \frac{1}{\omega_n(a)}\int_{|\sigma|=a} \frac{g(\sigma)}{\sum_{\beta\in\Gamma}|c_{\beta}|} \,d\sigma \ge C.$$ 

\noindent This gives the desired inequality.

\end{proof}

\begin{corollary}\label{cor:coeficientes acotados}

Let $g(\vec{v})= \sum_{\alpha}c_{\alpha}\vec{v}^{\alpha}$ be a convex polynomial such that $g(\vec{0}) = 0$ and $\nabla g(\vec{0})=\vec{0}.$ Suppose there exist two positive constants $A$ and $B$ such that  

\begin{equation}\label{eq:metido en john}
\{\vec{v}\,:\, |\vec{v}| \le A  \}  \subseteq \{ \vec{v}\,:\, g(\vec{v}) \le 1 \} \subseteq \{\vec{v}\,:\, |\vec{v}| \le B \}.
\end{equation}

\noindent Then there exists a constant $C$ that depends only on $A$ and the degree of $g$ such that

 \begin{equation}\label{eq:coeficientes acotados 2}
\sum_{\alpha}|c_{\alpha}| \le C.
\end{equation}

\noindent Moreover, for any point $\vec{x} =(x_1,\ldots,x_n)$ on the sphere of radius $A,$ there exist constants $C_1>0,$ $C_2>0$ that depend only on $A,$ $B$ and the degree of $g$ such that

\begin{equation}\label{eq:cota de g por debajo}
g(\vec{x}) \ge C_1 \ge C_2 \sum_{\alpha}|c_{\alpha}|.
\end{equation}

\begin{remark}
The bound given by equation (\ref{eq:coeficientes acotados 2}) can be obtained using just the left containment, i.e, the existence of a constant $A>0$ such that $\{\vec{v}\,:\, |\vec{v}| \le A  \}  \subseteq \{ \vec{v}\,:\, g(\vec{v}) \le 1 \}.$ The second bound, however, requires the existence of both an inner and an outer ball. 

\end{remark}

\end{corollary}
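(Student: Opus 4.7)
The plan is to obtain the first inequality directly from Theorem \ref{prop:mi bruna}, and the pointwise lower bound on the sphere of radius $A$ from a compactness argument combined with a polynomial-vanishing argument.

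For the bound $\sum_{\alpha}|c_{\alpha}|\le C$: the inner containment $\{|\vec{v}|\le A\}\subseteq\{g\le 1\}$ gives $g(\sigma)\le 1$ for every $\sigma$ on the sphere of radius $A$, hence $\int_{|\sigma|=A} g(\sigma)\,d\sigma \le \omega_n(A)$. Applying Theorem \ref{prop:mi bruna} with $a=A$ and $M=\deg g$ then yields $\sum_{\alpha}|c_{\alpha}|\le C(M,A)\,\omega_n(A)$, a constant depending only on $M$, $A$, and the ambient dimension $n$.

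For the inequality $g(\vec{x})\ge C_1$ with $|\vec{x}|=A$: let $T=T(M,A,B,n)$ be the set of convex polynomials $g$ on $\mathbb{R}^n$ of degree at most $M$ satisfying $g(\vec{0})=0$, $\nabla g(\vec{0})=\vec{0}$, $g\le 1$ on $\{|\vec{v}|\le A\}$, and $g\ge 1$ on $\{|\vec{v}|=B\}$. The last condition follows from the outer containment by continuity of $g$, so every polynomial in the hypotheses of the corollary lies in $T$. All defining conditions are closed in coefficient space, and by the first part the sum of absolute values of coefficients is uniformly bounded on $T$; hence $T$ is compact. The map $(g,\vec{x})\mapsto g(\vec{x})$ on the compact set $T\times\{|\vec{x}|=A\}$ is continuous and nonnegative, since $\vec{0}$ is a global minimum of the convex function $g$. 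I claim it is strictly positive: if $g_0(\vec{x}_0)=0$ for some $(g_0,\vec{x}_0)$, the convex zero-set of $g_0$ contains both $\vec{0}$ and $\vec{x}_0$ and therefore the whole segment joining them, so the one-variable polynomial $h(t):=g_0(t\vec{x}_0)$ vanishes on $[0,1]$ and thus identically, contradicting $h(B/A)=g_0((B/A)\vec{x}_0)\ge 1$. Hence $C_1:=\min\{g(\vec{x}):g\in T,\,|\vec{x}|=A\}>0$, depending only on $M$, $A$, $B$, and $n$. The remaining inequality $C_1\ge C_2\sum_{\alpha}|c_{\alpha}|$ is then automatic: setting $C_2=C_1/(C(M,A)\,\omega_n(A))$ and invoking the first part gives $C_2\sum_{\alpha}|c_{\alpha}|\le C_1$.

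The main obstacle is justifying the compactness of $T$. The delicate point is that the original outer containment $\{g\le 1\}\subseteq\{|\vec{v}|\le B\}$ is not closed under coefficient limits (a limit polynomial could satisfy $g(\vec{v}_0)=1$ with $|\vec{v}_0|>B$), which forces me to replace it by the weaker, closed, but still implied condition $g\ge 1$ on the sphere of radius $B$. The polynomial-vanishing argument is robust enough to work with this weaker condition, and it is precisely the polynomial nature of $g$ (rather than merely its convexity) that rules out a nontrivial zero-segment along a ray.
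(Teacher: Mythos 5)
Your proof is correct. The first inequality is established exactly as in the paper: apply Theorem \ref{prop:mi bruna} with $a=A$, and use the inner containment to bound the spherical average of $g$ by $1$.

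For the lower bound $g(\vec{x})\ge C_1$ you take a genuinely different route. The paper applies the Bruna--Nagel--Wainger one-variable lemma (Lemma 2.1 of \cite{BrNaWa88}) to $p(t)=g(t\vec{x})$ and the doubling estimate $p(\lambda t)\le \lambda^M C_M^{-1}p(t)$ with $\lambda=B/A$, which produces the explicit constant $C_1 = C_M A^M/B^M$. You instead recycle the compactness scheme already used for Theorem \ref{prop:mi bruna}: you exhibit a compact family $T$ of admissible polynomials (closed in coefficient space and bounded by the first part) and show that $(g,\vec{x})\mapsto g(\vec{x})$ cannot vanish on $T\times\{|\vec{x}|=A\}$, because the zero set of a nonnegative convex function is convex, so a zero at $\vec{x}_0$ would force $g(t\vec{x}_0)\equiv 0$ and contradict $g\ge 1$ on the sphere of radius $B$. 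The observation that the genuinely closed substitute for the outer containment is the condition $g\ge 1$ on $\{|\vec{v}|=B\}$ is the right one, and both proofs in fact rely on this same reformulation. The tradeoff is the usual one: the paper's argument is constructive and gives the constant explicitly, while yours is shorter and more in keeping with the soft argument used for Theorem \ref{prop:mi bruna}, at the price of a non-explicit $C_1$. Both yield a constant depending only on $A$, $B$, the degree, and the dimension, so the conclusion is the same.
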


\begin{proof}
The first result follows immediately from the previous claim. In fact, we showed that 

$$ \sum_{\alpha} |c_{\alpha}| \le C \int_{|\sigma|=A} g(\sigma)\,d\sigma.$$ 

\noindent But by (\ref{eq:metido en john}) we have that $g(\sigma) \le 1$ for all $\sigma$ such that $|\sigma|=A.$ The result follows. 

\bigskip

 Observe that the bound $g(\vec{x}) \geq  C_2 \sum_{\alpha}|c_{\alpha}|$ will be an immediate consequence of the above bound on the coefficients once we show that $g(\vec{x}) \geq C_1$.
\bigskip

 The proof of equation (\ref{eq:cota de g por debajo}) requires the use of Lemma 2.1 of \cite{BrNaWa88}. The lemma states that given a convex polynomial of one variable of degree $M$ of the form

$$ p(t) = \sum_{j=2}^M a_jt^j$$

\noindent there exists a constant $C_M>0$ that depends only on $M$ such that

\begin{equation}\label{eq:Bruna Nagel Wainger}
C_M\sum_{j=2}^M |a_j| t^j \le p(t) \le \sum_{j=2}^M |a_j|t^j \qquad \forall t\ge0.
\end{equation}

\noindent In particular, this result implies that for any $\lambda>1$ and $t\ge0,$ 

\begin{equation}\label{eq:p en lambda}
 p(\lambda t) \le \sum_{j=2}^M |a_j| \lambda^j t^j \le \lambda^M  \sum_{j=2}^M |a_j| t^j \le \frac{\lambda^M}{C_M} p(t).
\end{equation}

\bigskip

Given a point $\vec{x}$ on the sphere of radius $A$ centered at the origin, we will let

$$ p(t) = g(t\vec{x}).$$

\noindent Notice that this defines a convex polynomial of one variable for which the bounds in equation (\ref{eq:Bruna Nagel Wainger}) apply.  Taking $t=1$ and $\lambda = \frac{B}{A}$ (where $A$ and $B$ are the radius of the inner and outer ball respectively) in equation $(\ref{eq:p en lambda}),$ we have that 

$$ p(1) \ge \frac{C_M}{\lambda^M}\, p\left(\frac{B}{A}\right).$$

\noindent That is, 

\begin{equation}\label{eq:penultima cota}
g(\vec{x}) \ge C_M\frac{ A^M}{B^M} \,g\left(\frac{B\vec{x}}{A} \right).
\end{equation}

\noindent Since $|\vec{x}|=A,$ then $ \left\vert\frac{B}{A}\vec{x}\right\vert=B.$ Because $g\left(\frac{B\vec{x}}{A} \right) \ge 1 $ by assumption, it follows that $g(\vec{x}) \ge C_M\frac{ A^M}{B^M}.$ This completes the proof of Corollary \ref{cor:coeficientes acotados}.

\end{proof}

\begin{remark} Notice that the convexity of $g$ implies that 

\begin{equation*}
g(\vec{x}) \ge C_1 \ge C_2 \sum_{\alpha}|c_{\alpha}|.
\end{equation*}

\noindent for any point $\vec{x}$ such that $|\vec{x}| \ge A.$

\end{remark}

\section{Decay of $\theta(\vec{\eta})$}\label{denominador}

In this section we study the decay of a function $\theta(\vec{\eta}),$ which we will presently define. The decay properties obtained for $\theta(\vec{\eta}),$ stated in Lemma \ref{lem:Schwartz2general m}, will be used in the next section to study the decay of the denominator integral in the integral formula for the Szeg\H{o} kernel derived in Proposition \ref{prop:integral szego1}.

\begin{lemma}\label{lem:Schwartz2general m} Let

\begin{equation}\label{eq:g}
g(\vec{v}) = \sum\limits_{\alpha \in \Gamma} c_{\alpha}\vec{v}^\alpha
\end{equation}

\noindent be a strictly convex polynomial in $\mathbb{R}^n$ such that

\begin{enumerate}[i)]
\item\label{itm:gorigen} $g(\vec{0})=0;$ 
\item\label{itm:ggrad} $\nabla g(\vec{0})=0;$ 
\item\label{itm:John} there exists a constant $0<A<1$ such that $\{\vec{v}\,:\, |\vec{v}|\le A\} \subseteq \{\vec{v}\,:\,g(\vec{v})\le 1\} \subseteq \{\vec{v}\,:\,|\vec{v}|\le 1\};$ and 
\item\label{itm:grado} there exist positive integers $m_1,\ldots,m_n$ such that the combined degree  of $g$ is \\$(m_1,\ldots,m_n)$  (refer to definition on page \pageref{def:``combined degree''}).
\end{enumerate}

\noindent Then
$$\theta(\vec{\eta}) = \left[ \,\, \displaystyle\int \limits_{\mathbb R^n} e^{\vec{\eta}\cdot \vec{v} - g(\vec{v}) }\, d\vec{v} \right]^{-1}$$

\noindent is a Schwartz function. Moreover, its decay depends only on the constant $A$ and the exponents $\{m_1,\ldots,m_n\}.$
\end{lemma}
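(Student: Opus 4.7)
\medskip

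\noindent\textbf{Plan.} Write $\theta(\vec{\eta})=1/F(\vec{\eta})$, where $F(\vec{\eta})=\int_{\mathbb{R}^n}e^{\vec{\eta}\cdot\vec{v}-g(\vec{v})}d\vec{v}$. Since $g$ is a strictly convex polynomial that grows polynomially in every direction (by hypothesis~(\ref{itm:John})), the integrand and all of its $\vec{v}^{\beta}$-weighted variants are absolutely integrable for each fixed $\vec{\eta}$; differentiation under the integral sign then shows that $F$ is $C^{\infty}$, strictly positive and that $\partial^{\beta}F(\vec{\eta})=\int\vec{v}^{\beta}\,e^{\vec{\eta}\cdot\vec{v}-g(\vec{v})}d\vec{v}$. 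Hence $\theta$ is automatically $C^{\infty}$, and the Schwartz property reduces to showing that $F(\vec{\eta})$ grows faster than any polynomial at infinity, while each $|\partial^{\beta}F(\vec{\eta})|$ is dominated by $F(\vec{\eta})$ times a polynomial in $|\vec{\eta}|$.

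\medskip

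\noindent\textbf{Step 1: Uniform size bounds on $g$.} By Corollary~\ref{cor:coeficientes acotados} applied to $g$, the hypothesis (\ref{itm:John}) gives $\sum_{\alpha}|c_{\alpha}|\le C(A,M)$, where $M$ is the total degree bound coming from the combined degree $(m_1,\ldots,m_n)$. Consequently,
\[
g(\vec{v})\le C(A,M)(1+|\vec{v}|)^{2M}\qquad\text{for all }\vec{v}\in\mathbb{R}^n.
\]
On the other hand, convexity with $g(\vec{0})=0$ makes $p(t):=g(t\vec{u})/t$ nondecreasing on $(0,\infty)$ along any unit vector $\vec{u}$, and the same corollary produces a uniform $C_1>0$ with $p(1)\ge C_1$; hence $g(\vec{v})\ge c_0|\vec{v}|$ for $|\vec{v}|\ge 1$, with $c_0=c_0(A,\{m_i\})$.

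\medskip

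\noindent\textbf{Step 2: Super-polynomial growth of $F$.} For $\vec{\eta}\neq \vec{0}$ set $\hat{\eta}=\vec{\eta}/|\vec{\eta}|$ and, for a parameter $T>0$ to be chosen, integrate $F$ over the tube
\[
\mathcal{T}_{T}=\{t\hat{\eta}+\vec{w}:t\in[T,T+1],\ \vec{w}\perp\hat{\eta},\ |\vec{w}|\le 1\},
\]
whose volume is a dimensional constant $c_{n}>0$. On $\mathcal{T}_{T}$ we have $\vec{\eta}\cdot\vec{v}\ge T|\vec{\eta}|$ and $|\vec{v}|\le T+2$, so Step~1 yields $g(\vec{v})\le C(A,M)(T+2)^{2M}$ and hence
\[
F(\vec{\eta})\ge c_{n}\exp\!\bigl(T|\vec{\eta}|-C(A,M)(T+2)^{2M}\bigr).
\]
Choosing $T=\lambda|\vec{\eta}|^{1/(2M-1)}$ with $\lambda=\lambda(A,M)$ sufficiently small, the exponent is bounded below by $c|\vec{\eta}|^{2M/(2M-1)}$ for $|\vec{\eta}|$ large, with $c=c(A,\{m_i\})>0$. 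In particular $F(\vec{\eta})$ grows faster than any polynomial in $|\vec{\eta}|$, so $\theta(\vec{\eta})$ already has the desired rapid decay.

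\medskip

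\noindent\textbf{Step 3: Derivative bounds and conclusion.} The quotient rule writes $\partial^{\alpha}\theta$ as a finite linear combination (with universal combinatorial coefficients) of terms of the form $\theta\cdot\prod_{k}\bigl(\partial^{\beta_{k}}F/F\bigr)$ with $\sum_{k}|\beta_{k}|=|\alpha|$, so it suffices to bound each quotient $\partial^{\beta}F/F$ by a polynomial in $|\vec{\eta}|$ with constants depending only on $A$ and $\{m_i\}$. Interpreting this ratio as the $\vec{v}^{\beta}$-moment of the probability measure $d\mu_{\vec{\eta}}=F(\vec{\eta})^{-1}e^{\vec{\eta}\cdot\vec{v}-g(\vec{v})}d\vec{v}$, which is log-concave because $g(\vec{v})-\vec{\eta}\cdot\vec{v}$ is convex, one checks from Step~1 (the lower bound $g\ge c_0|\vec{v}|$ on $|\vec{v}|\ge 1$ together with $|\nabla g(\vec{v})|\le C(1+|\vec{v}|)^{2M-1}$) that the unique critical point $\vec{v}^{*}(\vec{\eta})$ of $\vec{\eta}\cdot\vec{v}-g(\vec{v})$ satisfies $|\vec{v}^{*}|\lesssim|\vec{\eta}|^{1/(2M-1)}$, and that $d\mu_{\vec{\eta}}$ concentrates around $\vec{v}^{*}$ with sub-exponential tails. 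This yields $\int|\vec{v}|^{|\beta|}d\mu_{\vec{\eta}}\le C_{\beta}(1+|\vec{\eta}|)^{|\beta|/(2M-1)}$ and therefore $|\partial^{\alpha}\theta(\vec{\eta})|\le C_{\alpha}(1+|\vec{\eta}|)^{|\alpha|/(2M-1)}\theta(\vec{\eta})$. Combined with Step~2 this gives rapid decay of every derivative of $\theta$, with constants depending only on $A$ and $\{m_i\}$.

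\medskip

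\noindent\textbf{Main obstacle.} The delicate point is making the moment estimate in Step~3 quantitative and uniform in $\vec{\eta}$: one needs to transfer the coefficient bound from Corollary~\ref{cor:coeficientes acotados} into a quantitative control of $|\vec{v}^{*}(\vec{\eta})|$ and into a concentration inequality for the log-concave measure $d\mu_{\vec{\eta}}$ whose constants depend only on $A$ and $\{m_i\}$. The combined degree hypothesis enters precisely here, since it controls the coordinatewise growth of $\nabla g$ and thus locates $\vec{v}^{*}$ on the scale $|\vec{\eta}|^{1/(2M-1)}$ regardless of the direction of $\vec{\eta}$.
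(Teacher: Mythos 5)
Your Steps 1 and 2 are correct, and Step 2 is a genuinely nicer route than the paper's. The paper lower-bounds $F(\vec{\eta})=e^{L(\vec{\eta})}\int e^{h(\vec v)-h(\vec v_0)}\,d\vec v$ by first computing the Legendre transform $L(\vec\eta)$ explicitly (Claims~\ref{cl:bound g sin coefs en 2 dim}--\ref{cl:leg en gen en 2 dim}) and then separately showing that the second factor does not decay too fast (Claims~\ref{cl:forma cuadratica}--\ref{cl: r final}). Your tube argument sidesteps this split for the zeroth-order bound: integrating the positive integrand over $\mathcal T_T$ and optimizing $T$ gives super-polynomial growth of $F$ directly from the upper bound $g(\vec v)\le C(1+|\vec v|)^{2M}$, which in turn is just the coefficient bound of Corollary~\ref{cor:coeficientes acotados}. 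This is a cleaner path to the decay of $\theta$ itself.

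The gap is in Step 3. You correctly reduce to bounding the moments $\int|\vec v|^{|\beta|}\,d\mu_{\vec\eta}$ of the log-concave probability measure $d\mu_{\vec\eta}$, but then assert a concentration inequality (``sub-exponential tails'', ``$|\vec v^*|\lesssim|\vec\eta|^{1/(2M-1)}$'') without proof. Log-concavity alone gives concentration only in units of the covariance of $\mu_{\vec\eta}$; what the lemma needs is that this covariance (equivalently, the size of the level set $\{\vec w:f(\vec w)\le 1\}$ where $f(\vec w)=g(\vec v_0+\vec w)-g(\vec v_0)-\nabla g(\vec v_0)\cdot\vec w$) is controlled polynomially in $|\vec\eta|$, with constants depending only on $A$ and $\{m_i\}$. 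That is precisely the substance of the paper's Claims~\ref{cl:forma cuadratica}--\ref{cl: r final} and the separate integral estimates in Claim~\ref{cl:bound de las derivadas}: one must bound the Hessian of $g$ by a polynomial independent of the coefficients (Claim~\ref{cl:forma cuadratica}), conclude $|\{f\le 1\}|^{-1}\lesssim(1+r(\vec v_0))^{n/2}$ (Claim~\ref{cl:cota de f}), and locate $\vec v_0$ via the one-variable Bruna--Nagel--Wainger lemma (Claim~\ref{cl:porte de v0 gen}). Your outline names these goals but does not supply the arguments, so the proof is incomplete exactly where the hard work lies. A secondary issue: the claimed bound $|\vec v^*|\lesssim|\vec\eta|^{1/(2M-1)}$ does not follow from the hypotheses and is generally false -- the combined degree hypothesis only gives $|\nabla g(\vec v)|\gtrsim|\vec v|$ at infinity along a general direction, not $\gtrsim|\vec v|^{2M-1}$ -- and the correct (and sufficient) estimate is $|\vec v^*|\lesssim|\vec\eta|$, as in the paper's Claim~\ref{cl:porte de v0 gen}. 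This does not derail the strategy since any polynomial bound suffices, but the stated exponent is unjustified.
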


\bigskip

\begin{remark}\label{itm:gsuma}

Notice that under these assumptions, the coefficients of the polynomial $g(\vec{v}) = \sum_{\alpha\in\Gamma} c_{\alpha} \vec{v}^{\alpha}$ satisfy  $ \sum_{\alpha\in\Gamma}|c_{\alpha}|\le C,$ where $C$ depends only on the constant $A,$ on the degree of the polynomial and  on the dimension of the space. This was shown in Corollary \ref{cor:coeficientes acotados} on page \pageref{cor:coeficientes acotados}.

\end{remark}

\bigskip

\bigskip

Let $I = \displaystyle\int_{\mathbb{R}^n} e^{\vec{\eta}\cdot\vec{v} - g(\vec{v}) }\, d\vec{v}.$ We will show that $I$ grows at an exponential rate. We can write 

\begin{equation}\label{eq:def de I}
 I = e^{h(\vec{v_0})} \int_{\mathbb{R}^n} e^{h(\vec{v})-h(\vec{v_0})} \, d\vec{v},
\end{equation}
\noindent where $h(\vec{v}) = \vec{\eta}\cdot \vec{v} - g(\vec{v})$ and $\vec{v_0}$ is the point where $h(\vec{v})$ attains its maximum (notice that $\vec{\eta} = \nabla g(\vec{v_0})).$ Notice that from the growth condition of $g,$ and from the fact that $g$ is positive, it follows that $h$ is bounded above, and therefore attains a maximum. The convexity hypothesis further ensures that $h$ is strictly concave, and therefore that the maximum, $\vec{v_0},$ is unique. 

\bigskip

 Notice that $h(\vec{v_0}) = L(\vec{\eta}) = \sup_{\vec{v}} \left\{ \vec\eta\cdot\vec{v} - g(\vec{v})\right\}$ is the Legendre Transform of $g.$ We will show that the dominant term, $e^{h(\vec{v_0})}$, grows at an exponential rate in $\vec{\eta}.$ This term will provide the desired decay for $I^{-1}. $ We will then show that $ \int e^{h(\vec{v})-h(\vec{v_0})} \, d\vec{v}$ does not decrease too fast, that is, that it does not annul the growth of the dominant term. 
 
 \bigskip

\subsection{The dominant term}

We begin by studying the growth of the term $e^{h(\vec{v_0})}=e^{L(\vec{\eta})}.$ We show that $e^{L(\vec{\eta})}$ grows exponentially as a function of $\vec{\eta}.$ Moreover, we claim that the growth is independent of the choice of $g,$ but rather depends only on the constant $A,$ on the combined degree of $g$ and on the dimension of the space. More precisely, we show that there exist positive constants $C, \widetilde{C}$ which depend only on the combined degree of $g,$ the dimension of the space and the constant $A,$ such that

\begin{equation}\label{eq:estimate de dom term}
e^{L(\vec{\eta})} \ge \exp\left[{\widetilde{C} \left(|\eta_1|^\frac{2m_1}{2m_1-1}  +\ldots+|\eta_n|^\frac{2m_n}{2m_n-1}\right)}-C\right].
\end{equation}

\bigskip

We begin by showing that the polynomial $g$ is dominated, independently of its coefficients, by its pure terms of highest order. 

\bigskip

\begin{claim}\label{cl:bound g sin coefs en 2 dim} 

If $g(\vec{v})$ is as in the statement of Lemma \ref{lem:Schwartz2general m}, then there exists a constant $C>0$ that depends only on the constant $A$, on the combined degree of the polynomial and on the dimension of the space such that

$$g(\vec{v})  \le C(1 + v_1^{2m_1}+\ldots +v_n^{2m_n}).$$

\end{claim}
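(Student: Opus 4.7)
The plan is to bound $g(\vec{v})$ termwise by combining two ingredients: the universal coefficient bound obtained in Corollary \ref{cor:coeficientes acotados} (which applies here because the hypotheses of Lemma \ref{lem:Schwartz2general m} are precisely those required by that corollary, as noted in the remark immediately preceding this claim), and a weighted AM--GM / Young-type inequality that uses the combined-degree condition in a crucial way.

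First, I would observe that it suffices to bound each monomial $|\vec{v}^{\alpha}|$ separately, since $g(\vec{v}) \le \sum_{\alpha\in\Gamma}|c_\alpha|\,|\vec{v}^\alpha|$ and the coefficient sum is controlled by a constant $C_0 = C_0(A,\{m_j\},n)$ via Corollary \ref{cor:coeficientes acotados}. Thus I would reduce the claim to showing that
\begin{equation*}
|\vec{v}^{\alpha}| \;\le\; 1 + v_1^{2m_1} + \cdots + v_n^{2m_n}
\end{equation*}
for every index $\alpha\in\Gamma$ (using that each $2m_j$ is even so $v_j^{2m_j}=|v_j|^{2m_j}$).

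The key step is a weighted AM--GM argument. Set $\lambda_j = \alpha_j/(2m_j)$, so that the combined-degree hypothesis (i) gives $\sum_j \lambda_j \le 1$. Writing $|\vec{v}^{\alpha}| = \prod_j \bigl(|v_j|^{2m_j}\bigr)^{\lambda_j}$, I would introduce an auxiliary weight $\lambda_0 = 1 - \sum_j \lambda_j \ge 0$ paired with the value $1$, so that the weights $\lambda_0,\lambda_1,\ldots,\lambda_n$ sum to exactly $1$. Weighted AM--GM then yields
\begin{equation*}
|\vec{v}^{\alpha}| \;=\; \prod_{j=1}^{n}\bigl(|v_j|^{2m_j}\bigr)^{\lambda_j}\cdot 1^{\lambda_0} \;\le\; \lambda_0 + \sum_{j=1}^{n}\lambda_j\, v_j^{2m_j} \;\le\; 1 + \sum_{j=1}^{n} v_j^{2m_j}.
\end{equation*}
Summing over $\alpha$ with the coefficient bound from Corollary \ref{cor:coeficientes acotados} finishes the proof with $C = C_0(A,\{m_j\},n)$.

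The only mild subtlety (rather than a real obstacle) is treating the case $\sum_j \lambda_j < 1$, which is why I would add the dummy weight $\lambda_0$ so that the standard weighted AM--GM applies directly; condition (ii) of the combined-degree definition is not needed here, only condition (i). Everything else is routine.
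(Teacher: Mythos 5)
Your proof is correct and follows essentially the same strategy as the paper: reduce to the universal coefficient bound of Corollary~\ref{cor:coeficientes acotados}, then bound each monomial using the combined-degree constraint on the weights $\alpha_j/(2m_j)$. The paper's termwise bound goes through the intermediate inequality $|\vec{v}^\alpha| \le r(\vec{v})^{\sum_j \alpha_j/(2m_j)} \le 1 + r(\vec{v})$ with $r(\vec{v}) = v_1^{2m_1}+\cdots+v_n^{2m_n}$, whereas you apply weighted AM--GM with a dummy weight directly; these are equally elementary and equivalent in spirit.
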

\bigskip

\begin{proof} 
Let 
$$r(\vec{v}) = v_1^{2m_1} +\ldots+ v_n^{2m_n}.$$

\noindent Notice that for any $\vec{v}\in\mathbb{R}^n,$

\begin{equation}\label{eq:primer bound de r 1}
v_1^{\alpha_1}\cdots v_n^{\alpha_n} \le r(\vec{v})^{\frac{\alpha_1}{2m_1}+\cdots+\frac{\alpha_n}{2m_n}}.
\end{equation}

\bigskip

\noindent Also, since $g \ge 0,$ 

\begin{equation}\label{eq:un bound de g}\begin{split}
 g(\vec{v})  = \left\vert g(\vec{v})\right\vert  \le \sum\limits_{\alpha\in\Gamma} \left\vert c_{\alpha}\right\vert \left\vert v_1^{\alpha_1}\cdots v_n^{\alpha_n} \right\vert 
 \le\sum\limits_{\alpha\in\Gamma} \left\vert c_{\alpha}\right\vert \,\left\vert\, r(\vec{v})^{\frac{\alpha_1}{2m_1}+\cdots+\frac{\alpha_n}{2m_n}}\right\vert. \\
 \end{split}\end{equation}

\bigskip

\noindent Moreover, recall that since $g$ is of combined degree $(m_1,\ldots,m_n),$ any index $\alpha\in\Gamma$ satisfies that 

$$ \frac{\alpha_1}{2m_1} + \ldots +\frac{\alpha_n}{2m_n} \le 1.$$ 

\noindent Hence, 

\begin{equation}\label{eq:otro bound de g}
  r(\vec{v})^{\frac{\alpha_1}{2m_1}+\cdots+\frac{\alpha_n}{2m_n}} \le 1 +r(\vec{v}).
 \end{equation}

\noindent Thus, and since $\sum\limits_{\alpha\in\Gamma} |c_{\alpha}| \le C,$ it follows from equations (\ref{eq:un bound de g}) and (\ref{eq:otro bound de g}) that

$$ g(\vec{v}) \le \sum\limits_{\alpha\in\Gamma} |c_{\alpha}| (1+r(\vec{v})) \le C(1+r(\vec{v})).$$

\noindent This finishes the proof of Claim \ref{cl:bound g sin coefs en 2 dim}

\end{proof}

\bigskip

Since this estimate does not depend on the coefficients of $g,$ it is now easy to obtain a lower bound for $h(\vec{v_0})$ in terms of $\vec{\eta}$ which does not depend on the choice of $g.$    

\bigskip

\begin{claim}\label{cl:leg en gen en 2 dim} The Legendre Transform of $g(\vec{v})$ where $\vec{v} \in \mathbb{R}^n$ is large for large values of $|\vec{\eta}|$. More precisely, 

$$L(\vec{\eta}) \ge \widetilde{C} \left( \,|\eta_1|^\frac{2m_1}{2m_1-1}  +\ldots+ |\eta_n|^\frac{2m_n}{2m_n-1}\,\right)  -C,$$ 
\noindent where $C $ and $\widetilde{C}$ are positive constants that depend only on the constant $A,$ on the combined degree of $g$ and on the dimension of the space. 
\end{claim}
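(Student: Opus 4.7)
The plan is to combine Claim \ref{cl:bound g sin coefs en 2 dim} with an explicit coordinate-by-coordinate maximization of the resulting lower bound on $\vec{\eta}\cdot\vec{v} - g(\vec{v})$. By Claim \ref{cl:bound g sin coefs en 2 dim} there is a constant $C>0$, depending only on $A$, the combined degree, and the dimension, such that
$$-g(\vec{v}) \ge -C - C\bigl(v_1^{2m_1}+\cdots+v_n^{2m_n}\bigr).$$
Consequently
$$L(\vec{\eta}) \;=\; \sup_{\vec{v}\in\mathbb R^n}\bigl\{\vec{\eta}\cdot\vec{v}-g(\vec{v})\bigr\} \;\ge\; -C + \sup_{\vec{v}\in\mathbb R^n}\Bigl\{\vec{\eta}\cdot\vec{v}-C\sum_{j=1}^{n} v_j^{2m_j}\Bigr\},$$
and the crucial observation is that the supremum on the right splits as a sum of one-variable suprema because the bound on $g$ contains only pure monomials.

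Next I would carry out each of the one-variable optimizations. For fixed $j$, the function $v_j \mapsto \eta_j v_j - C v_j^{2m_j}$ attains its maximum at the critical point
$$v_j^{*} \;=\; \operatorname{sgn}(\eta_j)\,\Bigl(\tfrac{|\eta_j|}{2m_j C}\Bigr)^{1/(2m_j-1)},$$
and substituting back gives
$$\eta_j v_j^{*}-C(v_j^{*})^{2m_j} \;=\; \frac{2m_j-1}{2m_j}(2m_j C)^{-1/(2m_j-1)}\,|\eta_j|^{2m_j/(2m_j-1)}.$$
Setting $\widetilde C := \min_{1\le j\le n}\frac{2m_j-1}{2m_j}(2m_j C)^{-1/(2m_j-1)}$, summing over $j$, and absorbing the additive $-C$ gives exactly
$$L(\vec{\eta}) \;\ge\; \widetilde{C}\bigl(\,|\eta_1|^{2m_1/(2m_1-1)}+\cdots+|\eta_n|^{2m_n/(2m_n-1)}\bigr)-C,$$
with constants depending only on $A$, $\{m_1,\ldots,m_n\}$ and $n$, as required.

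There is no real obstacle here once Claim \ref{cl:bound g sin coefs en 2 dim} is in hand: the content is entirely in the uniform (coefficient-free) majorization of $g$ by its pure top-order monomials, which is exactly what the previous claim delivers. The only care to take is that the bound on $g$ must be independent of the coefficients $c_\alpha$ so that the resulting constants $\widetilde{C}, C$ in the conclusion depend solely on $A$, the combined degree, and $n$; this is guaranteed because Corollary \ref{cor:coeficientes acotados} (invoked in the proof of Claim \ref{cl:bound g sin coefs en 2 dim}) already provides a universal bound on $\sum_\alpha |c_\alpha|$ under hypothesis \ref{itm:John}.
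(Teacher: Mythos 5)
Your proposal is correct and follows essentially the same approach as the paper: both use Claim \ref{cl:bound g sin coefs en 2 dim} to dominate $g$ by a constant-free sum of pure monomials, split the Legendre supremum coordinatewise, and compute each one-variable Legendre transform of $C|v_j|^{2m_j}$, arriving at the same constant $\widetilde{C}=\min_j \frac{2m_j-1}{2m_j}(2m_j C)^{-1/(2m_j-1)}$. The only cosmetic difference is that the paper quotes the formula for the Legendre transform of $\frac{B}{2k}|w|^{2k}$ while you rederive it by locating the critical point.
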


\bigskip

\begin{proof}

It follows from the previous claim that 

\begin{equation*}\begin{split}
L(\vec{\eta}) = & \sup_{\vec{v}} \text \{\vec{\eta}\cdot\vec{v} - g(\vec{v})\} \\ \ge  &\sup_{\vec{v}} \text \{\vec{\eta}\cdot\vec{v} - C - C|v_1|^{2m_1} - \ldots  - C|v_n|^{2m_n}\} \\ 
= & -C+  \sup_{v_1} \text \{\eta_1v_1 -  C|v_1|^{2m_1}\}  +\ldots + \sup_{v_n} \text \{\eta_nv_n -  C|v_n|^{2m_n}\} \\
\end{split}\end{equation*}

\noindent But given $w\in\mathbb{R},$ the Legendre Transform of $\frac{B}{2k} |w|^{2k}$ is $ \widetilde{B} |\eta|^\frac{2k}{2k-1},$ where $$\widetilde{B} = B^\frac{-1}{2k-1} \left( \frac{2k-1}{2k} \right).$$

\noindent Thus,

$$ L(\vec{\eta}) \ge \widetilde{C}\left(|\eta_1|^\frac{2m_1}{2m_1-1} + \ldots + |\eta_n|^\frac{2m_n}{2m_n-1}\right) - C, $$

\noindent where $\widetilde{C} = \min \left\{ B_1^\frac{-1}{2m_1-1} \left( \frac{2m_1-1}{2m_1}\right),\ldots, B_n^\frac{-1}{2m_n-1} \left( \frac{2m_n-1}{2m_n} \right) \right\},$ and $B_j = C2m_j.$

\end{proof}

\bigskip

This finishes the proof that the dominant term, $e^{L(\vec{\eta})},$ grows at an exponential rate in $\vec{\eta},$ independently of the coefficients of $g.$ More precisely,  we have shown that 

\begin{equation}\label{eq:termino dominante}
e^{L(\vec{\eta})} \ge \exp\left[{\widetilde{C} \left(|\eta_1|^\frac{2m_1}{2m_1-1}  +\ldots+|\eta_n|^\frac{2m_n}{2m_n-1}\right)}-C\right].
\end{equation}

\subsection{A polynomial bound for the remaining terms}

\noindent It suffices now to show that 
$$ J = \int_{\mathbb{R}^n} e^{h(\vec{v}) - h(\vec{v_0})} \, d\vec{v}$$

\noindent  is not too small to obtain the desired decay for $I^{-1}.$ Recall that

$$J = \int_{\mathbb{R}^n} e^{\vec{\eta}\cdot (\vec{v} - \vec{v_0}) +g(\vec{v_0}) - g(\vec{v}) } \, d\vec{v}.$$

\noindent In order to estimate this integral, we will use the fact that if $f:\mathbb{R}^n \rightarrow \mathbb{R}$ is a convex function such that $f(\vec{0})=0$ and $\nabla f(\vec{0}) = 0$ then (see Appendix)

$$ \int_{\mathbb{R}^n}e^{-f(\vec{w})}\,d\vec{w} \approx |\{ \vec{w} \, : \, f(\vec{w}) \le 1 \}|.$$

\bigskip

Since $\vec{\eta} = \nabla g(\vec{v_0}),$ and making the change of variables $\vec{w} = \vec{v} - \vec{v_0},$ we can write 

$$J = \int_{\mathbb{R}^n} e^{-f(\vec{w})} \, d\vec{w},$$ 

\noindent where

\begin{equation}\label{eq:f}
f(\vec{w}) = - \nabla g(\vec{v_0})\cdot\vec{w} - g(\vec{v_0}) + g(\vec{v_0}+\vec{w}).
\end{equation}

\noindent Clearly $f(\vec{0}) = 0$ and $\nabla f(\vec{0}) = 0.$ Also, since $g$ is convex, so is $f.$ Thus,

\begin{equation}\label{eq:jota en 2}
J \approx |\{\vec{w} : f(\vec{w}) \le 1\} |.
\end{equation}

\noindent That is, 

\begin{equation}\label{eq:numerador der}
I = \int_{\mathbb{R}^n} e^{\vec{\eta}\cdot\vec{v} - g(\vec{v}) }\, d\vec{v} \approx e^{h(\vec{v_0})} |\{\vec{w} : f(\vec{w}) \le 1\} |.
\end{equation}

\bigskip

Our goal is to show that as $|\vec{\eta}|$ grows, the volume given in equation (\ref{eq:jota en 2}) decreases slower than the rate of growth we obtained for $e^{h(\vec{v_0})}.$ We begin by obtaining an upper bound for $f$ that is independent of the choice of $g,$ but rather depends only on its combined degree, on the dimension of the space and on the constant $A$ (where the constant $A$ is from assumption {\it (iii)} of Lemma \ref{lem:Schwartz2general m}). To do so we will write $f$ as an integral in terms of the quadratic form associated to the Hessian of $g.$ In Claim \ref{cl:forma cuadratica} we obtain an upper bound for this quadratic form in terms of a polynomial that is independent of the coefficients of $g.$ In Claim \ref{cl:bound de f en gen 2} we use this estimate to obtain the desired bound for $f.$ 

\bigskip

\begin{claim}\label{cl:forma cuadratica}
There is a constant $C$ depending only on the combined degree of $g$ so that 

$$\sum_{i,j=1}^n g_{ij}(\vec{v}) w_iw_j \le C \left(1+r(\vec{v})\right)|\vec{w}|^2,$$

\noindent where $ r(\vec{v}) = v_1^{2m_1} +\ldots +v_n^{2m_n}$ and $g_{ij}(\vec{v}) =\dfrac{\d^2g}{\d v_i \d v_j}(\vec{v}).$

\end{claim}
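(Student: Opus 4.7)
The plan is to expand the Hessian quadratic form monomial by monomial and to control each monomial uniformly in the coefficients by exploiting the combined-degree hypothesis, in the same spirit as Claim~\ref{cl:bound g sin coefs en 2 dim}.

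First I would write
$$\sum_{i,j=1}^n g_{ij}(\vec{v})\,w_i w_j \;=\; \sum_{\alpha\in\Gamma} c_{\alpha}\sum_{i,j=1}^n \bigl(\partial_i\partial_j \vec{v}^{\,\alpha}\bigr)w_i w_j,$$
and inspect the inner sum for a fixed monomial $\vec{v}^{\,\alpha}$. This splits into diagonal contributions $\alpha_i(\alpha_i-1)\,\vec{v}^{\,\alpha-2e_i} w_i^2$ (present only when $\alpha_i \ge 2$) and off-diagonal contributions $\alpha_i\alpha_j\,\vec{v}^{\,\alpha-e_i-e_j} w_iw_j$ for $i\neq j$ (present only when $\alpha_i,\alpha_j\ge 1$). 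Using $|w_iw_j|\le |\vec{w}|^2$ and absorbing the numerical factors $\alpha_i\alpha_j \le 4m_im_j$ into a constant depending only on $m_1,\ldots,m_n$, the problem reduces to bounding each residual monomial $|\vec{v}^{\,\alpha-2e_i}|$ or $|\vec{v}^{\,\alpha-e_i-e_j}|$ by $C(1+r(\vec{v}))$.

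Next I would reuse the elementary estimate $|v_1^{\beta_1}\cdots v_n^{\beta_n}| \le r(\vec{v})^{\beta_1/(2m_1)+\cdots+\beta_n/(2m_n)}$, valid for any multi-index $\beta$ with nonnegative entries (this is equation \eqref{eq:primer bound de r 1} applied to the reduced exponent). The combined-degree hypothesis says $\sum_k \alpha_k/(2m_k) \le 1$, and subtracting either $1/m_i$ (diagonal case) or $1/(2m_i)+1/(2m_j)$ (off-diagonal case) leaves an exponent that is still $\le 1$. Consequently $r(\vec{v})^{\text{exp}} \le 1 + r(\vec{v})$, which gives the desired pointwise control on each monomial contribution, uniformly in $\alpha$.

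Finally I would sum over $\alpha\in\Gamma$. Since $|\Gamma|$ is bounded in terms of the combined degree alone, and since Corollary~\ref{cor:coeficientes acotados} (as recorded in the Remark following Lemma~\ref{lem:Schwartz2general m}) gives $\sum_{\alpha}|c_{\alpha}|\le C$ with $C$ depending only on $A$ and the combined degree, the total bound is
$$\sum_{i,j=1}^n g_{ij}(\vec{v})\,w_iw_j \;\le\; C\bigl(1+r(\vec{v})\bigr)|\vec{w}|^2,$$
as claimed. The only real bookkeeping obstacle is to keep track of which derivative terms actually appear (ruling out $\alpha_i\le 1$ for diagonals and $\min(\alpha_i,\alpha_j)=0$ for off-diagonals), so that the negative exponents $\alpha-2e_i$ or $\alpha-e_i-e_j$ never arise; once this is handled the estimation is completely routine.
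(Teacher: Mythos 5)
Your proof is correct and reaches the same conclusion, but the first step is genuinely different from the paper's. The paper exploits the convexity of $g$ via linear algebra: since the Hessian $L$ is symmetric positive semi-definite, the matrix $\operatorname{Tr}(L)I - L$ is again positive semi-definite, whence $\sum_{i,j}g_{ij}w_iw_j \le (\sum_j |g_{jj}|)\,|\vec{w}|^2$. This reduces the problem to bounding only the diagonal Hessian entries. You instead bound $|w_iw_j|\le|\vec{w}|^2$ and control every entry $|g_{ij}|$, diagonal and off-diagonal, monomial by monomial. The trade-off: the paper's route uses convexity to avoid ever looking at off-diagonal entries (whose sign is uncontrolled), while yours is more elementary — no spectral argument — at the price of having to check the combined-degree accounting in both the diagonal case $\alpha-2e_i$ (exponent sum $\le 1 - 1/m_i$) and the off-diagonal case $\alpha-e_i-e_j$ (exponent sum $\le 1 - 1/(2m_i) - 1/(2m_j)$). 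Both exponent sums are $\le 1$, so your subsequent use of $|\vec{v}^{\,\beta}|\le r(\vec{v})^{\sum\beta_k/(2m_k)}\le 1+r(\vec{v})$ goes through exactly as in the paper's diagonal case, and the final summation using $\sum_\alpha|c_\alpha|\le C$ is identical in both arguments. The argument is complete; no gap.
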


\bigskip

\begin{proof}

\noindent Let $L$ be the Hessian matrix of $g$ so that $ \vec{w}^{\textbf{T}} \, L \, \vec{w} = \sum_{i,j=1}^n g_{ij} w_iw_j.$ Since $L$ is symmetric, it has $n$ linearly independent eigenvectors. Let $\vec{u_i},$ $i=1,\ldots,n$ be the eigenvectors of L, and $\lambda_i,$ $i=1,\ldots,n$ be the corresponding eigenvalues. Since $g$ is convex, the matrix $L$ is positive semi-definite, so its eigenvalues are non-negative. 

\bigskip

 Let $P = Tr(L)I,$ where $Tr(L) = \lambda_1 +\ldots +\lambda_n$ is the trace of the matrix $L$ and $I$ is the identity matrix. Let $Q = P-L.$ We claim that $Q$ is positive semi-definite, and hence that $L \le P$ as quadratic forms. In fact, notice that for $i=1,\ldots,n$ 
$$Q\vec{u_i} =  P\vec{u_i} - L\vec{u_i} =  Tr(L)\vec{u_i} - \lambda_i \vec{u_i} = \left(Tr(L)-\lambda_i\right)\vec{u_i} = \sum_{\substack{1\le j \le n\\  j \ne i}}\lambda_j \vec{u_i}.$$

\bigskip

\noindent Thus, for $i=1,\ldots,n,$ $\vec{u_i}$ is an eigenvector of $Q,$ with eigenvalue 
$$\mu_i = \sum_{\substack{1\le j \le n\\  j \ne i}}\lambda_j>0.$$

\bigskip

\noindent Thus, since $Q$ is a symmetric matrix whose eigenvalues are non-negative, $Q$ is positive semi-definite. Hence, since $ \vec{w}^{\textbf{T}} \, L \, \vec{w}\, \le \, \vec{w}^{\textbf{T}} \, P\,\vec{w},$ it follows that   

\begin{equation*} \begin{split}
 0\le &\,\sum_{i,j=1}^n g_{ij}(\vec{v}) w_iw_j 
 \le   \left(|g_{11}(\vec{v})|+\ldots+ |g_{nn}(\vec{v})|\right)|w|^2.\\
 \end{split}\end{equation*}
 
\bigskip

\noindent Notice that each $g_{jj}(\vec{v})$ is a polynomial of the form $\sum\limits_{\beta} \widetilde{c_{\beta}}\vec{v}^{\beta}$ where the indexes $\beta$ satisfy 

$$ \frac{\beta_1}{2m_1} +\cdots+ \frac{\beta_n}{2m_n} \le 1 - \frac{1}{m_j} < 1.$$

\noindent In particular, this implies that 

\begin{equation*}\label{eq:r alto bajo}
r(\vec{v})^{\frac{\beta_1}{2m_1}+\cdots+\frac{\beta_n}{2m_n}} \le 1 +r(\vec{v}).
\end{equation*}

\noindent Moreover, since for any $\vec{v}\in\mathbb{R}^n$ we have that $v_1^{\beta_1}\cdots v_n^{\beta_n} \le r(\vec{v})^{\frac{\beta_1}{2m_1}+\cdots+\frac{\beta_n}{2m_n}}$ it follows that

\begin{equation*}\begin{split}
 |g_{jj}(\vec{v})| \le &\, \sum\limits_{\beta} \left\vert\widetilde{c_{\beta}}\right\vert \left\vert v_1^{\beta_1}\cdots v_n^{\beta_n}\right\vert \le   \sum\limits_{\beta} \left\vert\widetilde{c_{\beta}}\right\vert \left(1+r(\vec{v})\right).\\
\end{split}\end{equation*}

\bigskip

Since $\sum_{\alpha\in\Gamma}|c_\alpha| \le C,$ the sum $\sum_{\beta\in\Gamma}\widetilde{c_{\beta}}$ is also bounded by a constant that does not depend on the choice of $g,$ but rather on the combined degree of $g.$ Then, and by the previous inequality, we have that

$$|g_{11}(\vec{v})| + \ldots + |g_{nn}(\vec{v})|  \le C( 1 + r(\vec{v})),$$

\noindent where $C$ depends only on the combined degree of $g.$ That is, 

$$ \sum_{i,j=1}^ng_{ij}(\vec{v})w_iw_j \le C (1+r(\vec{v}))|\vec{w}|^2.$$ 

\end{proof}

\bigskip

Using this result it is now possible to obtain an upper bound for $f$ which is independent of the choice of $g.$ We do so in the following claim.  

\bigskip

\begin{claim}\label{cl:bound de f en gen 2} If $$r(\vec{v}) = v_1^{2m_1} +\ldots+v_n^{2m_n}$$

\noindent and 

$$f(\vec{w}) = - \nabla g(\vec{v_0})\cdot\vec{w} - g(\vec{v_0}) + g(\vec{v_0}+\vec{w})$$

\noindent then, 

$$ f(\vec{w}) \lesssim |\vec{w}|^2 ( 1+ r(\vec{v_{0}})  + r( \vec{w})),$$

\noindent where the constant depends only on the constant $A,$ the combined degree of $g$ and the dimension of the space. 

\end{claim}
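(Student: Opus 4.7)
My plan is to express $f(\vec{w})$ using Taylor's theorem with integral remainder and then substitute the quadratic-form estimate from Claim \ref{cl:forma cuadratica}. Since $f(\vec{0})=0$ and $\nabla f(\vec{0}) = -\nabla g(\vec{v_0}) + \nabla g(\vec{v_0}) = \vec{0}$, the integral form of Taylor's theorem applied to $t \mapsto g(\vec{v_0} + t\vec{w})$ gives
$$f(\vec{w}) = g(\vec{v_0}+\vec{w}) - g(\vec{v_0}) - \nabla g(\vec{v_0})\cdot\vec{w} = \int_0^1 (1-t) \sum_{i,j=1}^n g_{ij}(\vec{v_0}+t\vec{w}) w_i w_j \, dt.$$
Invoking Claim \ref{cl:forma cuadratica} with $\vec{v} = \vec{v_0}+t\vec{w}$, the integrand is bounded (pointwise in $t$) by $C(1+r(\vec{v_0}+t\vec{w}))|\vec{w}|^2$, so
$$f(\vec{w}) \le C|\vec{w}|^2 \int_0^1 (1-t)\bigl(1 + r(\vec{v_0}+t\vec{w})\bigr)\, dt.$$

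The remaining step is to dominate $r(\vec{v_0}+t\vec{w})$ by a quantity independent of $t \in [0,1]$ that splits into pieces depending only on $\vec{v_0}$ and $\vec{w}$. Since $r(\vec{v}) = \sum_{j=1}^n v_j^{2m_j}$ and each exponent $2m_j$ is even, the elementary inequality $(a+b)^{2m_j} \le 2^{2m_j-1}(a^{2m_j} + b^{2m_j})$ applied coordinatewise yields
$$r(\vec{v_0}+t\vec{w}) \le 2^{2M-1}\bigl(r(\vec{v_0}) + r(t\vec{w})\bigr) \le 2^{2M-1}\bigl(r(\vec{v_0}) + r(\vec{w})\bigr),$$
where $M = \max_j m_j$, and where the last step uses $t^{2m_j} \le 1$ for $t \in [0,1]$. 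The constant depends only on the combined degree of $g$.

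Plugging this into the previous estimate and computing $\int_0^1 (1-t)\,dt = \tfrac{1}{2}$ gives
$$f(\vec{w}) \le C' |\vec{w}|^2 \bigl(1 + r(\vec{v_0}) + r(\vec{w})\bigr),$$
with $C'$ depending only on the constant $A$ (inherited through the bound on coefficients used in Claim \ref{cl:forma cuadratica}), the combined degree of $g$, and the dimension $n$. I do not anticipate a serious obstacle here: the hard work has already been done in Claim \ref{cl:forma cuadratica}, and the only genuine input beyond Taylor's theorem is the coordinatewise splitting of $r(\vec{v_0}+t\vec{w})$, which is immediate from the evenness of the exponents $2m_j$.
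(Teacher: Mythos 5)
Your proof is correct and follows the same route as the paper: Taylor's theorem with integral remainder, the Hessian bound from Claim \ref{cl:forma cuadratica}, and the coordinatewise splitting $r(\vec{v_0}+t\vec{w}) \lesssim r(\vec{v_0}) + r(\vec{w})$ for $t\in[0,1]$. The only cosmetic difference is that you make the constant $2^{2M-1}$ explicit via the elementary power-mean inequality, whereas the paper states the same bound by appealing to convexity.
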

\bigskip

\begin{proof} We begin by rewriting $f$ as an integral in terms of the quadratic form associated to the Hessian, so that we can apply our previous estimate. Integrating by parts, we can write

\begin{equation*}
 g(\vec{v_0}+\vec{w}) - g(\vec{v_0}) = \nabla g(\vec{v_0})\cdot \vec{w} + \int_0^1 \sum_{i,j=1}^n g_{ij}(\vec{v_0} + t\vec{w}) w_iw_j (1-t) \, dt.
 \end{equation*}

\noindent It follows that

\begin{equation}\label{eq:f en int en 2}
f(\vec{w}) =   \int_0^1 \sum_{i,j=1}^n g_{ij}(\vec{v_0} + t\vec{w}) w_iw_j(1-t) dt. 
\end{equation}

\noindent In particular, by convexity of $g$ we have that $f \ge 0.$

\bigskip

We can now use the bound for the Hessian obtained in Claim \ref{cl:forma cuadratica}. It follows that 

\begin{equation}\label{eq:f f}
f(\vec{w}) \lesssim \int_0^1 (1+r(\vec{v_{0}} + t \vec{w})) |\vec{w}|^2 (1-t) \, dt.
\end{equation}

\bigskip

\noindent Using convexity, it is easy to show that $r(\vec{u}+\vec{v}) \le  \max\{2^{2m_1-1},\ldots,2^{2m_n-1} \}[ r(\vec{u}) + r(\vec{v})].$ Applying this inequality to $r(\vec{v_{0}} + t \vec{w}), $ and since  $0\le t\le 1,$ we have that 

$$ r(\vec{v_{0}} + t \vec{w}) \lesssim r(\vec{v_{0}})  + r( t\vec{w}) \le r(\vec{v_{0}})  + r( \vec{w}).$$

\noindent Hence, it follows from equation (\ref{eq:f f}) that

$$ f(\vec{w}) \lesssim |\vec{w}|^2 ( 1+ r(\vec{v_{0}})  + r( \vec{w})).$$

\end{proof}

\bigskip
 In the next three claims we show that there is a polynomial $P$ and a constant $C$ depending only on the degrees $\{m_1,\ldots,m_n\}$  so that
$$ |\{ \vec{w}\,:\, f(\vec{w}) \le 1\}|^{-1} \le C(1+P(|\vec{\eta}|))^\frac{n}{2}.$$ 
\noindent In Claim \ref{cl:cota de f} we show that $ |\{ \vec{w}\,:\, f(\vec{w}) \le 1\}|^{-1}$ is bounded by a polynomial in terms of $r(\vec{v_0}),$ and in Claim \ref{cl:porte de v0 gen} we compare the sizes of $|\vec{v_0}|$ and  $|\vec{\eta}|.$ In Claim \ref{cl: r final} we conclude that $r(\vec{v_0})$ grows at most at a polynomial rate in $|\vec{\eta}|.$ 

\bigskip

\begin{claim}\label{cl:cota de f} If $ f(\vec{w}) \lesssim |\vec{w}|^2 ( 1+ r(\vec{v_0}) +r(\vec{w})),$ then

$$|\{\vec{w}:f(\vec{w}) \le 1 \}| \gtrsim (1+r(\vec{v_0}))^{-\frac{n}{2}}.$$ 

\end{claim}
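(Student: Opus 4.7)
The strategy is simply to exhibit an explicit Euclidean ball, centered at the origin, that lies inside $\{\vec w : f(\vec w)\le 1\}$ and whose radius is comparable to $(1+r(\vec{v_0}))^{-1/2}$. Taking volumes then yields the claimed lower bound. Write the hypothesis as
\[
f(\vec w)\le C\,|\vec w|^2\bigl(1+r(\vec{v_0})+r(\vec w)\bigr),
\]
where $C$ depends only on $A$, the combined degree $(m_1,\dots,m_n)$ and the dimension.

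The first step is to confine $\vec w$ to the unit ball so that the term $r(\vec w)$ is harmless. Since each $m_j\ge 1$, whenever $|w_j|\le 1$ one has $|w_j|^{2m_j}\le w_j^2$, so
\[
|\vec w|\le 1\quad\Longrightarrow\quad r(\vec w)=\sum_{j=1}^n w_j^{2m_j}\le |\vec w|^2\le 1.
\]
Consequently, on the unit ball,
\[
1+r(\vec{v_0})+r(\vec w)\le 2+r(\vec{v_0})\le 2\bigl(1+r(\vec{v_0})\bigr),
\]
and the hypothesis collapses to $f(\vec w)\le 2C\,|\vec w|^2\bigl(1+r(\vec{v_0})\bigr)$.

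Now set
\[
\rho:=\min\!\left\{1,\;\frac{1}{\sqrt{2C}}\,\bigl(1+r(\vec{v_0})\bigr)^{-1/2}\right\}.
\]
Because $1+r(\vec{v_0})\ge 1$, the second quantity is at most $1/\sqrt{2C}$, so $\rho\gtrsim (1+r(\vec{v_0}))^{-1/2}$ with a constant depending only on $C$. For any $\vec w$ with $|\vec w|\le \rho$, the confinement step applies and
\[
f(\vec w)\le 2C\,|\vec w|^2\bigl(1+r(\vec{v_0})\bigr)\le 2C\rho^2\bigl(1+r(\vec{v_0})\bigr)\le 1.
\]
Thus the Euclidean ball of radius $\rho$ is contained in $\{\vec w : f(\vec w)\le 1\}$.

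Taking volumes gives
\[
|\{\vec w : f(\vec w)\le 1\}|\ge \omega_n\,\rho^n\gtrsim \bigl(1+r(\vec{v_0})\bigr)^{-n/2},
\]
with the implicit constant depending only on $n$, on the constant $A$, and on $(m_1,\dots,m_n)$. No step requires delicate estimation; the only point to watch is the truncation at radius $1$ that tames $r(\vec w)$, and this is automatic because $(1+r(\vec{v_0}))^{-1/2}\le 1$.
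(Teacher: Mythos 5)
Your proof is correct, and it follows the same basic route as the paper: exhibit an explicit Euclidean ball centered at the origin, contained in $\{\vec w : f(\vec w)\le 1\}$, of radius comparable to $(1+r(\vec{v_0}))^{-1/2}$, then take volumes. The internal execution, however, is cleaner than the paper's. The paper works with the majorant $T(\vec w)=C|\vec w|^2(1+r(\vec{v_0})+r(\vec w))$, picks the point $\vec{w_T}$ of minimal norm on the level set $\{T=1\}$, and then runs a two-case analysis (according to whether $|\vec{w_T}|^2\ge \tfrac{1}{2C(1+r(\vec{v_0}))}$ or not) to deduce $|\vec{w_T}|^2\gtrsim (1+r(\vec{v_0}))^{-1}$; the second case requires a small extra argument bounding $r(\vec{w_T})$ by an absolute constant. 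Your observation that $r(\vec w)=\sum_j w_j^{2m_j}\le|\vec w|^2$ whenever $|\vec w|\le 1$ (valid because every $m_j\ge1$) neutralizes the troublesome $r(\vec w)$ term immediately, so the radius $\rho=\min\{1,(2C)^{-1/2}(1+r(\vec{v_0}))^{-1/2}\}$ can simply be written down and verified in one line, with no case split. Both arguments are valid and yield the same dependence of the implicit constant on $A$, $(m_1,\dots,m_n)$ and $n$.
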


\bigskip

\begin{proof} 

Let $C$ be such that $ f(\vec{w}) \le C |\vec{w}|^2 ( 1+ r(\vec{v_0}) +r(\vec{w}))$  and let

  $$T(\vec{w}) = C |\vec{w}|^2 ( 1+ r(\vec{v_0}) +r(\vec{w})).$$

\noindent  Then,

 $$|\{\vec{w}\,:\,f(\vec{w}) \le 1 \}| \ge |\{\vec{w}\,:\,T(\vec{w}) \le 1 \}|.$$

\bigskip

Let $ \Sigma = \{ \vec{u}\,:\, T(\vec{u})=1\} $  and let $m = \min \{ \,|\vec{u}|\,:\,\vec{u}\in\Sigma \}.$ Choose $\vec{w_T}$ such that $T(\vec{w_T}) = 1 $ and $|\vec{w_T}| = m.$ Then the set $|\{\vec{w}\,:\, T(\vec{w}) \le1 \}|$ is bounded from below by the volume of the ball of radius $|\vec{w_T}|.$ That is, 

$$|\{\vec{w}\,:\, T(\vec{w}) \le1 \}| \ge C_{n}|\vec{w_T}|^{n},$$

\noindent where $C_{n} = \frac{\pi^\frac{n}{2}}{\Gamma\left( \frac{n}{2}+1\right)}.$

\bigskip

 Let $a=  1+ r(\vec{v_0}).$ Our goal is to show that $|\vec{w_T}|^n \gtrsim a^{-\frac{n}{2}}. $   If  $|\vec{w_{T}}|^2  \geq \frac{1}{2Ca}, $ then $|\vec{w_{T}}|^n  \gtrsim a^{-\frac{n}{2}}, $ as desired. Otherwise, we have that $|\vec{w_{T}}|^2  < \frac{1}{2Ca}.$ Since $1 = T(\vec{w_T}) = C |\vec{w_T}|^2 ( 1+ r(\vec{v_0}) +r(\vec{w_T}))$ it follows that

$$ a|\vec{w_{T}}|^2 + |\vec{w_{T}}|^2  r(\vec{w_{T}}) = \frac{1}{C}.$$

\noindent But since $|\vec{w_{T}}|^2  < \frac{1}{2Ca},$ it follows that 

\begin{equation}\label{eq:cota con c}
 \frac{1}{2C} <  |\vec{w_{T}}|^2 r( \vec{w_{T}}).
 \end{equation}

\bigskip

 Also, since $a \ge 1,$ we have that $|\vec{w_{T}}|^2  < \frac{1}{2C}.$ Thus, $w_{Tj}^2 < \frac{1}{2C}$ for every $1\le j\le n.$  Hence, 

$$ r( \vec{w_{T}})  <  \left(\frac{1}{2C}\right)^{m_1} +\cdots + \left(\frac{1}{2C}\right)^{m_n}.$$ 

\noindent Using this in equation (\ref{eq:cota con c}) we have that 

$$\frac{1}{2C} <  |\vec{w_{T}}|^2  \left( \left(\frac{1}{2C}\right)^{m_1} +\cdots + \left(\frac{1}{2C}\right)^{m_n}\right).$$

\noindent Thus, and since $a \ge 1,$ it follows that 

$$ |\vec{w_{T}}|^2 > \Lambda \ge \frac{\Lambda}{a},$$

\noindent where $\Lambda  = \frac{1}{2C } \left( \left(\frac{1}{2C}\right)^{m_1} +\cdots + \left(\frac{1}{2C}\right)^{m_n}\right)^{-1}$ is a strictly positive constant. 

\bigskip

 Therefore, 

$$ |\{\vec{w}:f(\vec{w}) \le 1 \}| \gtrsim |\vec{w_T}|^n \gtrsim a^{-\frac{n}{2}}. $$ 

\noindent This finishes the proof of Claim \ref{cl:cota de f}. 

\end{proof}

\bigskip

\begin{claim}\label{cl:porte de v0 gen}
\noindent There exist positive constants $\beta_1$ and $\beta_2$ such that 

$$ |\vec{v_0}| \le \beta_1|\vec{\eta}| +\beta_2.$$ 

\noindent The constants depend only on $m_1,\ldots,m_n$ and the dimension of the space.

\end{claim}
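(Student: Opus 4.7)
The plan is to combine the one-variable Bruna-Nagel-Wainger lemma (equation (\ref{eq:Bruna Nagel Wainger})) with the convexity of $g$ to establish the uniform quadratic lower bound $g(\vec{v_0})\gtrsim|\vec{v_0}|^2$ for $|\vec{v_0}|\ge 1$, and then to use the elementary convexity inequality $\vec\eta\cdot\vec{v_0}\ge g(\vec{v_0})$ to deduce the claim. The first step is free: since $g(\vec 0)=0$ and $g$ is convex, the supporting-hyperplane inequality at $\vec{v_0}$ evaluated at the origin gives $0=g(\vec 0)\ge g(\vec{v_0})-\vec\eta\cdot\vec{v_0}$, hence
$$|\vec\eta|\,|\vec{v_0}|\ge\vec\eta\cdot\vec{v_0}\ge g(\vec{v_0}).$$
It thus suffices to prove the uniform lower bound $g(\vec{v_0})\ge C|\vec{v_0}|^2$ whenever $|\vec{v_0}|\ge 1$, with $C$ depending only on the $m_j$ and the dimension.

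To obtain this bound, set $\hat v_0=\vec{v_0}/|\vec{v_0}|$ and consider the one-variable polynomial $p(t)=g(t\hat v_0)$. Then $p$ is convex with $p(0)=p'(0)=0$, and its degree is at most $M:=2\max_j m_j$ (since the combined-degree condition forces $|\alpha|\le M$ for every index $\alpha$ appearing in $g$). Writing $p(t)=\sum_{j=2}^{M} a_j t^j$, the Bruna-Nagel-Wainger bounds give $C_M\sum|a_j|t^j\le p(t)\le\sum|a_j|t^j$ for $t\ge 0$. Applying the lower half at $\lambda t$, using $\lambda^j\ge\lambda^2$ for $\lambda\ge 1$ and $j\ge 2$, and then the upper half at $t$ yields the key comparison
$$p(\lambda t)\ge C_M\lambda^2\,p(t)\qquad\text{for every }\lambda\ge 1,\ t\ge 0,$$
which is the lower-bound counterpart of equation (\ref{eq:p en lambda}) and is the main mechanism turning BNW into uniform quadratic growth.

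Taking $t=1$ and $\lambda=|\vec{v_0}|$ in the above gives $g(\vec{v_0})=p(|\vec{v_0}|)\ge C_M|\vec{v_0}|^2\,g(\hat v_0)$. The outer-ball containment $\{g\le 1\}\subseteq\{|\vec v|\le 1\}$ forces $g(\hat v_0)\ge 1$, by a short continuity argument: if $g(\hat v_0)<1$ then $g((1+s)\hat v_0)<1$ for small $s>0$ by continuity, contradicting the contrapositive of the containment (which gives $g((1+s)\hat v_0)>1$ since $|(1+s)\hat v_0|>1$). Combining with $|\vec\eta|\,|\vec{v_0}|\ge g(\vec{v_0})$ gives $|\vec{v_0}|\le|\vec\eta|/C_M$ whenever $|\vec{v_0}|\ge 1$; the case $|\vec{v_0}|\le 1$ is trivial, so in all cases $|\vec{v_0}|\le 1+|\vec\eta|/C_M$, producing constants $\beta_1=1/C_M$ and $\beta_2=1$ that depend only on $m_1,\ldots,m_n$ and the dimension. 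The only nontrivial step is the uniform comparison $p(\lambda t)\ge C_M\lambda^2\,p(t)$; once it is in hand the rest is a short chain of substitutions.
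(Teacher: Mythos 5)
Your proof is correct and achieves the same conclusion as the paper's, but through a genuinely different route. The paper starts from the Cauchy--Schwarz bound $G'(|\vec{v_0}|) \le |\nabla g(\vec{v_0})| = |\vec\eta|$ and then applies Lemma~2.2 of Bruna--Nagel--Wainger (the lower bound on the \emph{derivative}, $P'(t)\ge C_m t\sum|a_j|$ for $t\ge 1$) together with $\sum|a_j|\ge G(1)\ge 1$ to get a linear bound directly. You instead begin from the subgradient inequality $\vec\eta\cdot\vec{v_0}\ge g(\vec{v_0})$ (a cleaner, first-order use of convexity that bypasses differentiation of $G$ entirely), and then invoke Lemma~2.1 of Bruna--Nagel--Wainger (the two-sided bound on the \emph{value} of $p$) to derive the dilation inequality $p(\lambda t)\ge C_M\lambda^2 p(t)$ for $\lambda\ge 1$; combined with $p(1)=g(\hat v_0)\ge 1$ from the outer-ball containment, this yields $g(\vec{v_0})\ge C_M|\vec{v_0}|^2$ for $|\vec{v_0}|\ge 1$ and hence the claim. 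Both routes exploit the same two inputs --- a uniform BNW-type inequality in one variable and the containment $\{g\le 1\}\subseteq\{|\vec v|\le 1\}$ --- but yours is slightly more elementary in that it never needs the derivative-version of BNW, and the intermediate quadratic lower bound $g(\vec{v_0})\gtrsim|\vec{v_0}|^2$ is a useful statement in its own right (it mirrors, on the opposite side, the dilation estimate already proved as equation~(\ref{eq:p en lambda}) in Corollary~\ref{cor:coeficientes acotados}). Your degree bound $M=2\max_j m_j$ and the continuity argument for $g(\hat v_0)\ge 1$ are both correct, and the resulting constants indeed depend only on the $m_j$ (and a fortiori are independent of $g$'s coefficients), as required.
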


In the proof of Claim \ref{cl:porte de v0 gen}, we use Lemma 2.2 of \cite{BrNaWa88}. For the reader's convenience, we state the Lemma below.

\begin{lemaBNW}
Let $C(m,T)$ denote  the space of polynomials 

$$P(t) = \sum_{j=0}^m a_jt^j$$

\noindent which satisfy:

\begin{enumerate}[(a)]
\item{The degree of $P$ is no bigger than $m;$}
\item{ $P(0) = a_0 =0;$ $P'(0) = a_1 =0;$}
\item{$P$ is convex for $0\le t\le T.$}
\end{enumerate}

\noindent Then there is a constant $C_m,$ independent of $T,$ so that if $P\in C(m,T),$ $P(t) = \sum_{j=2}^m a_j t^j,$ then 

$$P'(t) \ge C_m \sum_{j=2}^m |a_j| t^{j-1}$$

\noindent for $0\le t\le T.$ In particular, 

\begin{equation}\label{eq:Bruna derivadas}\begin{split}
& P'(t) \ge C_m t^{m-1} \sum_{j=2}^m |a_j|  \qquad \, {\text if} \,\,\, 0\le t \le 1, \qquad {\text and}\\
& P'(t) \ge C_m t \sum_{j=2}^m |a_j|  \qquad \qquad  {\text if} \,\,\, 1\le t \le T.\\
\end{split}\end{equation}

\end{lemaBNW}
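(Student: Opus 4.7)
The plan is a scaling reduction followed by a compactness argument, with a second rescaling needed to handle a delicate boundary case. First, reduce to $T=1$: for $P\in C(m,T)$, the substitution $s=t/T$, $\widetilde{P}(s)=P(Ts)$, produces a polynomial on $[0,1]$ with coefficients $\widetilde{a}_j = a_j T^j$, convex on $[0,1]$, and the target inequality at $t=Ts$ is, after multiplying by $T$, equivalent to $\widetilde{P}'(s)\ge C_m\sum|\widetilde{a}_j|\,s^{j-1}$ on $[0,1]$. So it suffices to prove the statement with $T=1$ and the same constant $C_m$.

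Next, normalize and compactify. Let
\[K_m = \Bigl\{\,P(t) = \sum_{j=2}^m a_jt^j\;:\;\textstyle\sum_j|a_j|=1,\ P\text{ is convex on }[0,1]\,\Bigr\},\]
a closed, bounded, hence compact subset of $\mathbb{R}^{m-1}$. Set $D(t)=\sum_j|a_j|t^{j-1}$ and $\rho(P,t)=P'(t)/D(t)$ for $(P,t)\in K_m\times(0,1]$. I will prove $\inf\rho\ge C_m>0$ by contradiction. Suppose $(P_n,t_n)\in K_m\times(0,1]$ satisfy $\rho(P_n,t_n)\to 0$. By compactness, extract a subsequence with $P_n\to P^*\in K_m$ coefficient-wise and $t_n\to t^*\in[0,1]$.

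If $t^*>0$, the denominator $D(P,t)$ is bounded away from $0$ near $(P^*,t^*)$, so $\rho$ is continuous there and $(P^*)'(t^*)=0$. Since $P^*$ is convex on $[0,1]$ with $(P^*)'(0)=0$, $(P^*)'$ is nonnegative and nondecreasing; the vanishing at $t^*$ forces $(P^*)'\equiv 0$ on $[0,t^*]$, hence $P^*\equiv 0$ there and thus identically, contradicting $\sum|a_j^*|=1$.

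The main obstacle is $t^*=0$, where both numerator and denominator vanish and $\rho$ is genuinely discontinuous in $P$ as one approaches the boundary. I resolve this by rescaling so the ratio is always evaluated at $s=1$, exploiting its scale-invariance. Define $\widehat{P}_n(s) = \sigma_n^{-1}P_n(t_n s)$ with $\sigma_n = \sum_j|a_j^{(n)}|t_n^j>0$. Then $\widehat{P}_n$ is a polynomial whose coefficients have absolute values summing to $1$, convex on $[0,1/t_n]$, and a direct computation gives $\rho(\widehat{P}_n,1)=\rho(P_n,t_n)\to 0$. Passing to a further subsequence so $\widehat{P}_n\to\widehat{P}^*$ coefficient-wise, the limit $\widehat{P}^*$ has coefficient norm $1$; and since $\widehat{P}_n''\ge 0$ on $[0,1/t_n]$ while $1/t_n\to\infty$, $\widehat{P}^*$ is convex on all of $[0,\infty)$. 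At the limit $(\widehat{P}^*)'(1)=0$, but $(\widehat{P}^*)'$ is nondecreasing and nonnegative on $[0,\infty)$ with $(\widehat{P}^*)'(0)=0$, so $(\widehat{P}^*)'\equiv 0$ on $[0,1]$; hence $\widehat{P}^* \equiv 0$, contradicting the coefficient normalization. The two ``in particular'' bounds (\ref{eq:Bruna derivadas}) then follow immediately by using $t^{j-1}\ge t^{m-1}$ on $[0,1]$ and $t^{j-1}\ge t$ on $[1,T]$.
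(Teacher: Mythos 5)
Your proof is correct, but note that the paper itself offers no proof to compare against: this lemma is imported verbatim from Bruna--Nagel--Wainger \cite{BrNaWa88} and is only restated for the reader's convenience before the proof of Claim \ref{cl:porte de v0 gen}. On its merits, your argument is complete: the reduction to $T=1$ by $s=t/T$ is the right way to obtain a constant independent of $T$; the normalization $\sum_j|a_j|=1$ and compactness of the convex, normalized class is exactly the device the paper itself uses to prove Theorem \ref{prop:mi bruna} (its multivariable analogue of BNW Lemma 2.1); and your treatment of the delicate endpoint $t^*=0$ via the scale invariance of $\rho$ -- rescaling so the ratio is always evaluated at $s=1$, at the price of convexity only on $[0,1/t_n]$, which suffices because $1/t_n\to\infty$ and convexity passes to the coefficientwise limit on all of $[0,\infty)$ -- is sound and is the one genuinely nontrivial step; the limit polynomial then has vanishing, nondecreasing, nonnegative derivative on $[0,1]$, hence vanishes identically, contradicting the normalization. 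The two displayed consequences in (\ref{eq:Bruna derivadas}) indeed follow from $t^{j-1}\ge t^{m-1}$ on $[0,1]$ and $t^{j-1}\ge t$ on $[1,T]$ for $j\ge 2$. The only cosmetic point worth making explicit is that the full inequality follows from the normalized case by homogeneity (and is trivial both for $P\equiv 0$ and at $t=0$), which you use implicitly when restricting to $K_m\times(0,1]$.
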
 

\bigskip

\begin{proof}[Proof of Claim \ref{cl:porte de v0 gen}] The statement is trivial if $|\vec{v_0}|\le 1,$ so we will assume that $|\vec{v_0}| > 1.$ Let $G(t) = g\left(\frac{t\vec{v_0}}{|\vec{v_0}|}\right).$  Then

$$G'(t) = \nabla g\left(\frac{t\vec{v_0}}{|\vec{v_0}|}\right)\cdot \left(\frac{\vec{v_0}}{|\vec{v_0}|}\right).$$ 

\noindent Thus, since $\nabla g(\vec{0})= \vec{0}$ by hypothesis,  $G'(0) = \nabla g(\vec{0}) \cdot \left(\frac{\vec{v_0}}{|\vec{v_0}|}\right)= 0.$ Also, notice that since $g$ is convex, so is $G.$ Hence, and since $G$ is a polynomial, $G'(t) >0$ if $t>0.$  By Cauchy-Schwarz, 

$$ |G'(t)| \le \left|\nabla g\left(\frac{t\vec{v_0}}{|\vec{v_0}|}\right)\right|.$$

\noindent Evaluating at $t= |\vec{v_0}|$ we have that $ |G'(|\vec{v_0}|)| \le |\nabla g(\vec{v_0})| = |\vec{\eta}|.$ But since $|\vec{v_0}|>0, $ it follows that 

\begin{equation}\label{eq:bound de G}
 G'(|\vec{v_0}|)= |G'(|\vec{v_0}|)| \le |\vec{\eta}|.
 \end{equation}
 
\noindent It suffices now to obtain a polynomial lower bound for $G'(|\vec{v_0}|)$ in terms of $|\vec{v_0}|.$ To do so, we use Lemma 2.2 of \cite{BrNaWa88}. 

\bigskip 

Notice that $G(t)$ is a convex polynomial of one variable such that $G(0) = G'(0)=0,$ so we can use the aforementioned result. Write 

$$ G(t)  = \sum_{j=2}^m a_jt^j.$$

\noindent Since we are considering $|\vec{v_0}|> 1,$ it follows from equations (\ref{eq:bound de G}) and (\ref{eq:Bruna derivadas}) that 

\begin{equation}\label{eq:ya con bruna}
|\vec{v_0}| \sum_{j=2}^m |a_j| \lesssim G'(|\vec{v_0}|) \le |\vec{\eta}|.
\end{equation}

\noindent It suffices now to obtain a lower bound for $\sum_{j=2}^m |a_j|,$ which must be independent of the choice of $g.$ To do so, we use assumption (iii) of Lemma \ref{lem:Schwartz2general m}, namely, the fact that

$$ \{ \vec{v} \,:\,g(\vec{v}) \le 1\} \subseteq \{ \vec{v}\,:\,|\vec{v}| \le 1 \}.$$ 

\noindent In particular, if $|\vec{v}| = 1,$ it must follow that $g(\vec{v}) \ge 1.$ Thus, evaluating at $t=1,$ it follows that

$$ G(1) = g\left( \frac{\vec{v_0}}{|\vec{v_0}|}\right) \ge 1.$$ 

\noindent But $G(1) = \sum_{j=2}^m a_j \le \sum_{j=2}^m |a_j|.$ Using this bound on equation (\ref{eq:ya con bruna}) yields $ |\vec{v_0}|  \lesssim |\vec{\eta}|.$ This finishes the proof of Claim \ref{cl:porte de v0 gen}.

\end{proof}

\bigskip

\begin{claim}\label{cl: r final}
 For $p\in \mathbb{R},$ $(1+r(\vec{v_0})^p)^{\frac{n}{2}}$ is at most of polynomial growth in $|\vec{\eta}|.$ 
\end{claim}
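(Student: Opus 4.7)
The plan is to deduce the claim directly from Claim \ref{cl:porte de v0 gen} together with the definition of $r$, requiring essentially no new ideas beyond what has already been set up.

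First I would invoke Claim \ref{cl:porte de v0 gen} to obtain constants $\beta_1, \beta_2 > 0$, depending only on $m_1,\ldots,m_n$ and the dimension, such that $|\vec{v_0}| \le \beta_1|\vec{\eta}| + \beta_2$. Since $|v_{0,j}| \le |\vec{v_0}|$ for each coordinate $j$, each monomial in $r(\vec{v_0}) = v_{0,1}^{2m_1} + \ldots + v_{0,n}^{2m_n}$ satisfies $v_{0,j}^{2m_j} \le (\beta_1|\vec{\eta}|+\beta_2)^{2m_j}$. Summing yields
$$r(\vec{v_0}) \;\le\; \sum_{j=1}^n (\beta_1|\vec{\eta}|+\beta_2)^{2m_j},$$
which is a polynomial in $|\vec{\eta}|$ of degree at most $2\max_j m_j$, with coefficients that depend only on $m_1,\ldots,m_n$ and the dimension.

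Next, for $p \ge 0$ I would apply the elementary bound $x^p \le 1 + x^{\lceil p\rceil}$ for $x \ge 0$ to conclude that $r(\vec{v_0})^p$ is itself dominated by a polynomial in $|\vec{\eta}|$; the same manipulation applied to $(1+y)^{n/2}$ with $y = r(\vec{v_0})^p$ then shows that $(1+r(\vec{v_0})^p)^{n/2}$ is bounded by a polynomial in $|\vec{\eta}|$, as claimed. For $p<0$, I would simply observe that $r(\vec{v_0})^p \to 0$ as $|\vec{\eta}|\to\infty$ (since Claim \ref{cl:porte de v0 gen} and the dominant-term analysis force $r(\vec{v_0})\to\infty$ in that limit), so $(1+r(\vec{v_0})^p)^{n/2}$ is in fact bounded for large $|\vec{\eta}|$ and is trivially of polynomial growth.

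I do not anticipate any real obstacle: the content of Claim \ref{cl: r final} is essentially a reformulation of Claim \ref{cl:porte de v0 gen} through the polynomial $r$, and all the quantitative control (both the linear bound on $|\vec{v_0}|$ and the uniformity of the constants in the coefficients of $g$) has already been established.
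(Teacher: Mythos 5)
Your proposal is correct and takes the same approach as the paper: both invoke Claim~\ref{cl:porte de v0 gen} to bound $|\vec{v_0}|$ linearly in $|\vec{\eta}|$ and then observe that $r(\vec{v_0})$ is consequently dominated by a polynomial in $|\vec{\eta}|$. The paper's proof is terser (it stops once $r(\vec{v_0})$ is shown to have polynomial growth, leaving the passage to powers and to the full expression implicit), whereas you spell out the elementary power manipulations and handle $p\ge 0$ and $p<0$ separately; this is more than the paper records, but it is not a different route.
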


\begin{proof} The proof is trivial. In fact, since $|\vec{v_0}| \le \beta_1|\vec{\eta}| +\beta_2,$ it is clear that $r(\vec{v_0}) = v_{01}^{2m_1}+\ldots+ v_{0n}^{2m_n} \le |\vec{v_0}|^{2m_1} + \ldots +|\vec{v_0}|^{2m_n}$ is bounded from above by a polynomial in $|\vec{\eta}|.$ 

\end{proof}

\noindent It follows from these claims that there exists a polynomial $P(|\vec{\eta}|),$ which does not depend on the choice of $g,$ such that 

$$ \theta(\vec{\eta}) = I^{-1} \lesssim \text{exp}\left[-C\left(|\eta_1|^\frac{2m_1}{2m_1-1}+ \ldots +|\eta_n|^\frac{2m_n}{2m_n-1}\right)\right] (1+P(|\vec{\eta}|))^{\frac{n}{2}}.$$

\bigskip

\noindent  This finishes the proof that $\theta(\vec{\eta})$ decays at an exponential rate. Moreover, this decay is independent of the coefficients of the polynomial $g$ that defines it. We must now show that the same is true of all the derivatives of $\theta(\vec{\eta}).$

\bigskip

\subsection{Decay of the derivatives}

 The derivatives of $\theta(\vec{\eta})$ consist of sums of terms of the form 

\begin{equation}\label{eq:derivadas en gen}
\frac{C\left[\int_{\mathbb{R}^n} e^{\vec{\eta}\vec{v} - g(\vec{v})}v_1^{i_{1,1}}\cdots v_n^{i_{n,1}}\,d\vec{v}\right]^{a_1}\cdots \left[\int_{\mathbb{R}^n} e^{\vec{\eta}\vec{v} - g(\vec{v})}v_1^{i_{1,r}}\cdots v_n^{i_{n,r}}\,d\vec{v}\right]^{a_r}}{\left[\int_{\mathbb{R}^n} e^{\vec{\eta} \vec{v} - g(\vec{v})}\,d\vec{v}\right]^d},
\end{equation}

\noindent where $i_{1,1},\ldots, i_{n,r}, a_1, \ldots, a_r,d \in \mathbb{N} $ and $a_1+ \ldots + a_r + 1 = d.$ We show that each of these terms decays rapidly, and that the decay depends only on the coefficients of $g.$

\begin{remark}
The fact that $a_1+ \ldots + a_r -d <0$ is crucial. As before (equation (\ref{eq:def de I})), we can factor out a term $e^{h(\vec{v_0})}$ for each of these integrals. That is, we will factor out $ \left(e^{h(\vec{v_0})}\right)^{a_1+\ldots+a_r - d } = e^{-h(\vec{v_0})}.$ This term will provide the desired decay. 
\end{remark}

\bigskip

In order to understand the decay of the derivatives of $\theta(\vec{\eta})$ we need to study integrals of the form $\displaystyle\int_{\mathbb{R}^n} e^{\vec{\eta}\vec{v} - g(\vec{v})} v_1^{i_1} \cdots v_n^{i_n}  \, d\vec{v}.$

\bigskip

\begin{claim}\label{cl:bound de las derivadas}

$$\left\vert \int_{\mathbb{R}^n} e^{\vec{\eta}\vec{v} - g(\vec{v})} v_1^{i_1} \cdots v_n^{i_n}  \, d\vec{v}\right\vert \lesssim e^{h(\vec{v_0})} H_f[|\vec{v_0}|],$$

\noindent where

\begin{equation*}\begin{split}
 H_f\left[|\vec{v_0}|\right] &\, = \sum_{s_1=0}^{i_1} \cdots \sum_{s_n=0}^{i_n} \binom{i_1}{s_1}\cdots  \binom{i_n}{s_n} |\vec{v_0}|^{i_1+\ldots+i_n-s} \\
&\, \times \left( \, |\{\vec{w}\,:\, f(\vec{w})\le 1 \}|  \,  (1 + |\vec{v_0}|^{sB}) + \Theta \, \right);\\
\end{split}\end{equation*}

\noindent $\Theta$ is a constant that depends only on the combined degree of $g$ and the dimension of the space; $s= {s_1}+\cdots+{s_n};$ and $B =4 \max \{m_1,\ldots,m_n\}.$

\end{claim}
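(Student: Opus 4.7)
The plan is to reduce the integral to a moment of $e^{-f}$ by translating the variable of integration and then to control that moment by splitting at the unit sublevel set of $f$. First, substituting $\vec{v} = \vec{v_0} + \vec{w}$ and using $\vec{\eta} = \nabla g(\vec{v_0})$, one checks that $\vec{\eta}\cdot\vec{v} - g(\vec{v}) = h(\vec{v_0}) - f(\vec{w})$, which pulls $e^{h(\vec{v_0})}$ out of the integral. Expanding each factor $(v_{0j} + w_j)^{i_j}$ by the binomial theorem and using $|v_{0j}| \le |\vec{v_0}|$ reduces the claim to showing, for each multi-index $(s_1,\ldots,s_n)$ with $s = s_1 + \cdots + s_n$,
$$\int_{\mathbb{R}^n} e^{-f(\vec{w})}|w_1|^{s_1}\cdots|w_n|^{s_n}\, d\vec{w} \lesssim |\{\vec{w}:f(\vec{w})\le 1\}|\bigl(1+|\vec{v_0}|^{sB}\bigr) + \Theta.$$

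To prove this moment bound I would split at the level set $\{f\le 1\}$. On $\{f\le 1\}$ we have $e^{-f}\le 1$ and $|w_1|^{s_1}\cdots|w_n|^{s_n}\le |\vec{w}|^s$, so the contribution is at most $|\{f\le 1\}|\sup_{\{f\le 1\}}|\vec{w}|^s$. The crucial geometric input is a polynomial-in-$|\vec{v_0}|$ bound on the diameter of $\{f\le 1\}$: I would show $\sup_{\{f\le 1\}}|\vec{w}|\le C(1+|\vec{v_0}|^B)$ with $B = 4\max_j m_j$. A quadratic lower bound $g(\vec{v})\gtrsim |\vec{v}|^2$ valid for $|\vec{v}|\ge 1$, which follows from convexity, the normalization $g(\vec{0})=\vec{0}=\nabla g(\vec{0})$, and the inclusion $\{g\le 1\}\subseteq\{|\vec{v}|\le 1\}$ via the one-variable rescaling in the proof of Corollary \ref{cor:coeficientes acotados}, together with the polynomial bounds on $g(\vec{v_0})$ and $|\nabla g(\vec{v_0})|$ guaranteed by the universal coefficient control of Theorem \ref{prop:mi bruna}, forces $f(\vec{w})>1$ as soon as $|\vec{w}|$ exceeds an explicit polynomial in $|\vec{v_0}|$ whose degree is controlled by $\max_j m_j$. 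This yields the first piece $|\{f\le 1\}|(1+|\vec{v_0}|^{sB})$.

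For the complement $\{f>1\}$ I would exploit the fact that strict convexity of $f$ with $f(\vec{0})=0$ yields the scaling inequality $f(\lambda\vec{u})\ge\lambda f(\vec{u})$ for $\lambda\ge 1$; equivalently $\{f\le t\}\subseteq t\cdot\{f\le 1\}$ for $t\ge 1$. Parameterizing $\mathbb{R}^n\setminus\{f\le 1\}$ by $\lambda\vec{u}$ with $\vec{u}\in\partial\{f\le 1\}$ and $\lambda\ge 1$, one finds
$$\int_{\{f>1\}}e^{-f(\vec{w})}|\vec{w}|^s\,d\vec{w}\le \Gamma(s+n,1)\int_{\partial\{f\le 1\}}|\vec{u}|^s\,d\vec{u},$$
which the polar identity $\int_{\partial\{f\le 1\}}d\vec{u} = n|\{f\le 1\}|$, combined with the diameter estimate of the previous step, absorbs into a bound of the form $\lesssim(1+|\vec{v_0}|^{sB})|\{f\le 1\}|$, up to a constant $\Theta$ coming from the deep tail where $|\vec{w}|$ is so large that $f(\vec{w})\gtrsim|\vec{w}|^2$ and $\int e^{-c|\vec{w}|^2}|\vec{w}|^s\,d\vec{w}$ depends only on $(m_1,\ldots,m_n)$ and $n$. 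Reassembling the binomial expansion reproduces $H_f[|\vec{v_0}|]$ as defined in the claim.

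\textbf{The main obstacle} is securing the polynomial diameter bound on $\{f\le 1\}$ in the second step. Without the combined-degree hypothesis, the interplay between $g(\vec{v_0})$, $\nabla g(\vec{v_0})$, and the growth of $g$ at infinity could allow $\{f\le 1\}$ to be arbitrarily elongated, destroying any polynomial control by $|\vec{v_0}|$. It is precisely the universal coefficient bounds of Theorem \ref{prop:mi bruna} and Corollary \ref{cor:coeficientes acotados} that tame the coefficients of $g$ and pin the diameter exponent to a quantity depending only on $\max_j m_j$. Once this geometric estimate is in place, the rest reduces to combinatorial bookkeeping of the binomial expansion and an elementary convexity-based tail bound.
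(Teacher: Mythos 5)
Your proposal is correct and shares the paper's main ingredients --- the translation $\vec{v}=\vec{v_0}+\vec{w}$ to factor out $e^{h(\vec{v_0})}$, the binomial expansion of $(v_{0j}+w_j)^{i_j}$, a polynomial-in-$|\vec{v_0}|$ scale controlled by $B=4\max_j m_j$, and a quadratic lower bound on $f$ coming from Lemma 2.1 of Bruna--Nagel--Wainger together with the universal coefficient bounds --- but the decomposition is genuinely different. The paper slices the moment integral $\int|\vec{w}|^s e^{-f}$ by thresholds on $|\vec{w}|$: first at $|\vec{w}|=1$, then within $\{|\vec{w}|>1\}$ at $|\vec{w}|=\lambda|\vec{v_0}|^B$, and bounds the outermost piece by a pure constant $\Theta$ using $f(\vec{w})\gtrsim|\vec{w}|^2-E$ there. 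You slice directly by the sublevel set $\{f\le 1\}$: inside you invoke a diameter bound $\sup_{\{f\le 1\}}|\vec{w}|\lesssim 1+|\vec{v_0}|^B$ (which is equivalent to, and follows from, the paper's deep-tail estimate of $f$), and outside you use the convexity scaling $f(\lambda\vec{u})\ge\lambda f(\vec{u})$ for $\lambda\ge 1$ together with a polar-type integration, which absorbs the entire exterior into a bound of the form $(1+|\vec{v_0}|^{sB})|\{f\le 1\}|$. This is in fact slightly cleaner, since your route does not need the additive constant $\Theta$ at all. One thing to make precise: the identity $\int_{\partial\{f\le 1\}}d\vec{u}=n|\{f\le 1\}|$ as written holds for the cone measure $\rho(\omega)^n\,d\omega$ on $\partial\{f\le 1\}$ (the boundary measure under which the map $(\lambda,\vec{u})\mapsto\lambda\vec{u}$ has Jacobian $\lambda^{n-1}$), \emph{not} for Hausdorff surface measure; the displayed inequality preceding it is only correct with that interpretation of $d\vec{u}$, so the argument is sound but the measure should be stated explicitly to avoid a reader mistaking it for surface area.
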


\begin{proof} As before (equation (\ref{eq:def de I})), we can write 

\begin{equation}\label{eq:I tilde en m}
 \widetilde{I} = \int_{\mathbb{R}^n} e^{\vec{\eta}\vec{v} - g(\vec{v})} v_1^{i_1} \cdots v_n^{i_n}  \, d\vec{v} = e^{h(\vec{v_0})} \int_{\mathbb{R}^n} e^{h(\vec{v})-h(\vec{v_0})} v_1^{i_1} \cdots v_n^{i_n}\, d\vec{v},
\end{equation}

\noindent where $h(\vec{v}) = \vec{\eta}\cdot \vec{v} - g(\vec{v})$ and $\vec{v_0}$ is the point where $h(\vec{v})$ attains its maximum;  and 

$$ \widetilde{J} = \int_{\mathbb{R}^n} e^{h(\vec{v})-h(\vec{v_0})} v_1^{i_1} \cdots v_n^{i_n}\, d\vec{v} = \int_{\mathbb{R}^n}(w_1 + v_{01})^{i_1} \cdots (w_n + v_{0n})^{i_n} e^{-f(\vec{w})} \, d\vec{w},$$

\noindent where $f(\vec{w})=g(\vec{v_0}+\vec{w}) -g(\vec{v_0}) - \nabla g(\vec{v_0})\cdot \vec{w}$ as in equation (\ref{eq:f}). Writing

$$ \widetilde{J} = \sum_{s_1=0}^{i_1} \cdots \sum_{s_n=0}^{i_n} \binom{i_1}{s_1}\cdots  \binom{i_n}{s_n}v_{01}^{i_1-s_1}\cdots  v_{0n}^{i_n-s_n} \int_{\mathbb{R}^n}w_1^{s_1} \cdots w_n^{s_n} e^{-f(\vec{w})}\, d\vec{w},$$

\noindent it follows that

$$|\widetilde{J}| \le \sum_{s_1=0}^{i_1} \cdots \sum_{s_n=0}^{i_n} \binom{i_1}{s_1}\cdots  \binom{i_n}{s_n}|\vec{v_0}|^{i_1 +\ldots + i_n -(s_1 + \ldots + s_n)} \int_{\mathbb{R}^n}|\vec{w}|^{{s_1}+\ldots+{s_n}} e^{-f(\vec{w})}\, d\vec{w}.$$

 Let $s= {s_1}+\cdots+{s_n}$ and ${J_s} =\displaystyle \int_{\mathbb{R}^n}|\vec{w}|^s  e^{-f(\vec{w})}\, d\vec{w}.$ Write

\begin{equation}\label{eq:jota s}
J_s = \int_{\{\vec{w}\in \mathbb{R}^n\,:\, |\vec{w}|\le 1 \}} |\vec{w}|^s e^{-f(\vec{w})}\, d\vec{w} +\int_{\{\vec{w}\in \mathbb{R}^n\,:\, |\vec{w}| > 1 \}} |\vec{w}|^s e^{-f(\vec{w})}\, d\vec{w} = J_{s_1} + J_{s_2}.
\end{equation}

\noindent  Then

\begin{equation}\label{eq:parte chica}
 J_{s_1} \le  \int_{\mathbb{R}^n} e^{-f(\vec{w})}\, d\vec{w} \approx |\{ \vec{w} \,:\, f(\vec{w})\le 1\}|.
\end{equation}

\noindent Given $\vec{v_0},$ we can estimate the size of $J_{s_2}$ by splitting the integral into the two following regions:

\begin{equation}\label{eq:subdivision}
 J_{s_2} = \int\limits_{\substack{ |\vec{w}|>1 \\ |\vec{w}|\le\lambda |\vec{v_0}|^B }} |\vec{w}|^s e^{-f(\vec{w})}\, d\vec{w} + \int\limits_{\substack{ |\vec{w}|>1 \\ |\vec{w}|>\lambda |\vec{v_0}|^B }} |\vec{w}|^s e^{-f(\vec{w})}\, d\vec{w},
\end{equation}

\bigskip

\noindent for some large constant $\lambda$ yet to be determined and $B =4 \max \{m_1,\ldots,m_n\}.$   Then

\begin{equation}\label{eq:jota s 1}
 \int\limits_{\substack{ |\vec{w}|>1 \\|\vec{w}|\le\lambda |\vec{v_0}|^B }} |\vec{w}|^s e^{-f(\vec{w})}\, d\vec{w}  \le \, \lambda^s |\vec{v_0}|^{sB} \int_{\mathbb{R}^n} e^{-f(\vec{w})}\, d\vec{w} \approx \lambda^s |\vec{v_0}|^{sB} | \{\vec{w}\,:\, f(\vec{w}) \le 1 \}|.
\end{equation}

\bigskip

 In order to estimate $\displaystyle\int_{\{\vec{w}\,: \,|\vec{w}|>1, |\vec{w}|>\lambda |\vec{v_0}|^B \}} |\vec{w}|^s e^{-f(\vec{w})}\, d\vec{w},$ we will find a lower bound in this region for $f(\vec{w})$ in terms of $|\vec{w}|^2$ and we will then bound the integral by a constant. Since $f \ge 0,$

\begin{equation}\label{eq:division de f}\begin{split}
f(\vec{w}) \ge&\,  |g(\vec{v_0}+\vec{w})| - |g(\vec{v_0})| - |\nabla g(\vec{v_0})\cdot\vec{w} |.  
\end{split}\end{equation}

\noindent We will show that $g(\vec{v_0}+\vec{w})$ is bounded from below by a constant multiple of $|\vec{w}|^2.$ It will then suffice to show that the remaining terms in the above expression can be dominated by this bound. 

\bigskip

Let 

$$ F(t)  = g\left(\frac{t(\vec{v_0}+\vec{w})}{|\vec{v_0}+\vec{w}|} \right),$$ 

\noindent where $t\in\mathbb{R}.$ Then $F(t)$ is a convex polynomial in one variable, such that $F(0) = F'(0) = 0.$ We will write 

$$ F(t) = \sum_{j=2}^M a_j t^j.$$

\noindent Notice that in the region we are considering, and since $B>1,$ we have that

\begin{equation}\label{eq:desig tr en v0}
 |\vec{v_0}+\vec{w}| \geq |\vec{w}| - |\vec{v_0}| > |\vec{w}|  - \frac{|\vec{w}|^{\frac{1}{B}}}{\lambda^{\frac{1}{B}}}  \geq  |\vec{w}|\left( 1-\frac{1}{\lambda^\frac{1}{B}}\right)>1-\frac{1}{\lambda^\frac{1}{B}}. 
\end{equation} 

\noindent In particular, if $\lambda > 2^B,$ then $ |\vec{v_0}+\vec{w}| > 1/2.$ But it follows from Lemma 2.1 of \cite{BrNaWa88} (refer to equation (\ref{eq:Bruna Nagel Wainger}) on page \pageref{eq:Bruna Nagel Wainger}), that there exists a constant $C_M>0$ that depends only on the degree of $F$ such that

$$  g(\vec{v_0}+\vec{w}) = F(|\vec{v_0}+\vec{w}|)   \geq C_M |\vec{v_0}+\vec{w}|^2\, \sum_{j=2}^M |a_j|.$$

\noindent Furthermore, we claim that  $\sum_{j=2}^M |a_j| \ge 1$ so that $  g(\vec{v_0}+\vec{w})  \geq C_M |\vec{v_0}+\vec{w}|^2.$ In fact, since by hypothesis $ \{ \vec{v} \,:\,g(\vec{v}) \le 1 \} \subseteq \{ \vec{v} \,:\,|\vec{v}| \le 1 \},$ it follows that

$$ F(1) = g\left( \frac{\vec{v_0}+\vec{w}}{|\vec{v_0}+\vec{w}|}\right) \ge 1. $$

\noindent Therefore, $ \sum_{j=2}^M |a_j| \ge \sum_{j=2}^M a_j = F(1) \ge 1.$ Also, by equation (\ref{eq:desig tr en v0}), it follows that for $\lambda > \left( \frac{\sqrt{2}}{\sqrt{2}-1}\right)^B,$

\begin{equation}\label{eq:bound de g v0 w}
 g(\vec{v_0}+\vec{w}) \ge C_M |\vec{v_0}+\vec{w}|^2  \ge C_M |\vec{w}|^2\left( 1-\frac{1}{\lambda^\frac{1}{B}}\right)^2  \ge \frac{C_M|\vec{w}|^2}{2}.
\end{equation}

\noindent We would now like to obtain an upper bound for $|g(\vec{v_0})|.$  Recall that by Claim \ref{cl:bound g sin coefs en 2 dim}, for any $\vec{v} \in \mathbb{R}^n$ we have that $g(\vec{v}) \le C (1+r(\vec{v})).$ Thus, and since $ \max \{2m_1,\ldots,2m_n\}  = B/2 < B,$
 
 \begin{equation*}\begin{split}
  |g(\vec{v_0})| & \,  \le   C (1+|\vec{v_0}|^{2m_1} + \ldots + |\vec{v_0}|^{2m_n}) \le C(1+n + n|\vec{v_0}|^B) < C\left(1+n+\frac{n|\vec{w}|}{\lambda}\right).\\
  \end{split}\end{equation*}

\noindent Thus, for $\lambda > \frac{8Cn}{C_M},$ it follows that

\begin{equation}\label{eq:bound gv0}
  |g(\vec{v_0})| \le C\left(1+n+ \frac{n}{\lambda} + \frac{n|\vec{w}|^2}{\lambda} \right)  \le  C\left(1+n+ \frac{n}{\lambda} \right) + \frac{C_M|\vec{w}|^2}{8}.
\end{equation}

\noindent It now suffices to obtain an upper bound for $|\nabla g(\vec{v_0})\cdot\vec{w}|.$  Notice that for each $1\le j\le n,$ the $j^{th}$ entry of $\nabla g$ is a polynomial whose exponents satisfy 

$$  \frac{2\alpha_1}{B} +\cdots +\frac{2\alpha_n}{B} \le  \frac{\alpha_1}{2m_1} +\cdots +\frac{\alpha_n}{2m_n} \le 1 - \frac{1}{2m_j} < 1.$$

\noindent That is, $ \alpha_1 + \ldots + \alpha_n < \frac{B}{2}.$ Thus, we can bound each entry of $|\nabla g(\vec{v_0})|$ by a constant multiple of $1+|\vec{v_0}|^\frac{B}{2}.$ The coefficients of each of these entries are multiples of the coefficients of $g,$ where the factors depend only on the degree of $g.$ Thus, since $\sum_{\alpha\in\Gamma} |c_{\alpha}| \le C,$ there exists a constant $C_1$ that depends only on $C$ and the degree of $g$ such that

$$ |\nabla g(\vec{v_0})| \le C_1 (1+|\vec{v_0}|^\frac{B}{2}).$$

\noindent Hence, in the region under consideration we have 

$$ |\nabla g(\vec{v_0})| \le C_1 \left(1+\frac{|\vec{w}|^\frac{1}{2}}{\lambda^\frac{1}{2}}\right).$$

\noindent It follows that 

\begin{equation*}
|\nabla g(\vec{v_0})\cdot\vec{w}| \le |\nabla g(\vec{v_0})| |\vec{w}| \le   C_1 \left(1+\frac{|\vec{w}|^\frac{1}{2}}{\lambda^\frac{1}{2}}\right) |\vec{w}| =  C_1 \left(|\vec{w}|+\frac{|\vec{w}|^\frac{3}{2}}{\lambda^\frac{1}{2}}\right).
\end{equation*}

\noindent Since $|\vec{w}| \le  \frac{1}{A}|\vec{w}|^2 +A^2$ for any constant $A>0,$ and since $|\vec{w}|^\frac{3}{2}\le |\vec{w}|^2 +1,$ we have that

\begin{equation*}
|\nabla g(\vec{v_0})\cdot\vec{w}| \le  C_1 \left(  \frac{1}{A}|\vec{w}|^2 +A^2+\frac{|\vec{w}|^2}{\lambda^\frac{1}{2}}+\frac{1}{\lambda^\frac{1}{2}} \right).
\end{equation*}

\noindent Then for $A> \frac{16C_1}{C_M}$ and $\lambda> \left( \frac{16C_1}{C_M}\right)^2$ it follows that 

\begin{equation}\label{eq:bound gradiente}
|\nabla g(\vec{v_0})\cdot\vec{w}| \le  C_1 \left( A^2+\frac{1}{\lambda^\frac{1}{2}} \right) + \frac{C_M|\vec{w}|^2}{8}.
\end{equation}

\noindent Therefore, by equations (\ref{eq:bound de g v0 w}),(\ref{eq:bound gv0}) and (\ref{eq:bound gradiente}), and taking 

$$\lambda> \max\left\{ \left( \frac{\sqrt{2}}{\sqrt{2}-1}\right)^B, \frac{8Cn}{C_M},\left( \frac{16C_1}{C_M}\right)^2\right\}$$

\noindent it follows that 

\begin{equation}\label{eq:bound de f schwartz}
 f(\vec{w}) \ge |\vec{w}|^2 \left( \frac{C_M}{2} -\frac{C_M}{8} - \frac{C_M}{8} \right) - E = \frac{C_M|\vec{w}|^2}{4} -E,
 \end{equation}
 
 \noindent where $E$ is a constant that depends on the combined degree of $g$ and the dimension of the space, but is otherwise independent. 
 
 \bigskip 
 
 Recall that our goal is to obtain an upper bound for 

$$ I_s \,\,\,= \int\limits_{\substack{ |\vec{w}|>1\\ |\vec{w}|>\lambda |\vec{v_0}|^B }} |\vec{w}|^s e^{-f(\vec{w})}\, d\vec{w}.$$

\noindent Using the lower bound for $f(\vec{w})$ obtained in equation(\ref{eq:bound de f schwartz}) we have that

$$ I_s \,\,\, \lesssim \int\limits_{\substack{ |\vec{w}|>1 \\ |\vec{w}|>\lambda |\vec{v_0}|^B }} |\vec{w}|^s e^{\frac{-C_M|\vec{w}|^2}{4} }\, d\vec{w} \lesssim \int_0^{\infty} r^s e^{\frac{-C_Mr^2}{4}}\,dr.$$

\noindent Since $C_M$ is a strictly positive constant, the above integral converges. This finishes the proof of Claim \ref{cl:bound de las derivadas}.

\end{proof}

It follows from equation (\ref{eq:numerador der}) and Claim \ref{cl:bound de las derivadas} that the derivatives of $\theta({\vec{\eta}})$ are bounded from above by a sum of terms of the form 

$$ \frac{H_f[|\vec{v_0}|]^{d-1}}{e^{h(\vec{v_0})}|\{\vec{w}\,:\, f(\vec{w})\le 1 \}|^d}.$$

\noindent Moreover, by Claim \ref{cl:bound de las derivadas} these terms can be bounded by terms of the form

$$ \frac{q(|\vec{v_0}|)}{e^{h(\vec{v_0})}|\{\vec{w}\,:\, f(\vec{w})\le 1 \}|^k},$$

\noindent where $q:\mathbb{R} \rightarrow \mathbb{R}$ is a polynomial, and $k\in [1,d]\cap\mathbb{Z}.$ By Claim \ref{cl:porte de v0 gen} it follows that $q(|\vec{v_0}|)$ is bounded by a polynomial in $|\vec{\eta}|.$ Furthermore, by Claim \ref{cl:cota de f}, $|\{\vec{w}:f(\vec{w}) \le 1 \}|^{-k} \lesssim (1+r(\vec{v_0}))^{\frac{kn}{2}}.$ By Claim \ref{cl: r final} this latter bound is at most of polynomial growth in $|\vec{\eta}|.$ Thus, the derivatives of $\theta({\vec{\eta}})$ are bounded by sums of terms of the form $e^{-h(\vec{v_0})}\tilde{q}(|\vec{\eta}|),$ where $\tilde{q}$ grows at a polynomial rate. Finally, by equation (\ref{eq:termino dominante}) $e^{-h(\vec{v_0})}$ decays at an exponential rate in $|\vec{\eta}|.$ This finishes the proof that the derivatives of $\theta({\vec{\eta}})$ decay exponentially. Thus, $\theta$ is a Schwartz function. Moreover, it follows from the previous computations that its decay is independent of the coefficients of $g.$

\bigskip

\hfill$\square$
\bigskip

\section{Bounds for the Szeg\H{o} kernel}\label{main theorem}

In this section we present the proof of our main result. With $\Omega_b$ defined as before, let $(\vec{x},\vec{y},t)$ and $(\vec{x'},\vec{y'},t')$ be any two points in $\partial\Omega_b.$ Define

\begin{equation}\label{eq:b tilde}
\tilde{b}(\vec{v}) = b\left( \vec{v} +\frac{\vec{x}+\vec{x'}}{2}\right) - \nabla b\left(\frac{\vec{x}+\vec{x'}}{2}\right)\cdot\vec{v} - b\left(\frac{\vec{x}+\vec{x'}}{2}\right);
\end{equation}

\begin{equation}\label{eq:delta}
\delta(\vec{x},\vec{x'}) = b(\vec{x}) +b(\vec{x'}) -2b\left(\frac{\vec{x}+\vec{x'}}{2} \right);
\end{equation}

\noindent and

\begin{equation}\label{eq:w}
w = (t'-t)  + \nabla b \left(  \frac{\vec{x}+\vec{x'}}{2}\right) \cdot (\vec{y'}-\vec{y}).
\end{equation}

\noindent  We obtain the following estimate for the Szeg\H{o} kernel associated to the domain $\Omega_b:$

\begin{equation*}
\left\vert S\left((\vec{x},\vec{y},t);(\vec{x'},\vec{y'},t')\right)\right\vert \le \frac{C}{\sqrt{\delta^2+ \widetilde{b}(\vec{y}-\vec{y'})^2+ w^2 }\left| \left\{ \vec{v} \,:\, \tilde{b}(\vec{v}) < \sqrt{\delta^2+ \widetilde{b}(\vec{y}-\vec{y'})^2+ w^2 }\right\} \right|^2}.
\end{equation*}

\noindent Here the constant $C$ depends on the exponents $\{m_1,\ldots,m_n\}$ and the dimension of the space, but is independent of the two given points.

\begin{remark}

Here,  $ \tilde{b}$ is a strictly convex polynomial of the same combined degree as $b,$ but with $ \tilde{b}(\vec{0}) = 0,$ and  $ \nabla \tilde{b}(\vec{0}) = \vec{0}.$ Notice that we can write $ \tilde{b}(\vec{v}) = f(\vec{v}) - L(\vec{v}),$ where $f(\vec{v}) = b\left( \vec{v} +\frac{\vec{x}+\vec{x'}}{2}\right)$ and $L$ is the tangent hyperplane to $f$ at $\vec{v}=0.$ 
\end{remark}

\begin{remark}
Since $b$ is strictly convex, $\delta(\vec{x},\vec{x'}) > 0.$

\end{remark}

We obtain this bound by estimating the integral expression for the Szeg\H{o} kernel obtained in Proposition \ref{prop:integral szego1}. That is, we study 

\begin{equation*}
S((\vec{x},\vec{y},t);(\vec{x'},\vec{y'},t')) = 
{\scaleint{7ex}_{\bs 0}^{\infty}} e^{-2\pi\tau [b(\vec{x'}) + b(\vec{x}) + i(t'-t)]} 
\left({\scaleint{7ex}_{\bs \mathbb R^n}}  \frac{e^{2\pi \vec{\eta} \cdot [\vec{x}+\vec{x'} - i(\vec{y'}-\vec{y})]}   }{\displaystyle\int \limits_{\mathbb R^n} e^{4\pi [\vec\eta \cdot \vec{v}  - b(\vec{v})\tau]} \, d\vec{v}} \, d\vec{\eta} \,\right) d\tau.  
\end{equation*}

\noindent The proof is in essence an application of John ellipsoids.  Recall that by John \cite{Jo48}, given a symmetric convex compact region, there exists a maximal inscribed ellipsoid $\mathfrak{E}$ in that region (centered at the center of symmetry) such that $\sqrt{n}\,\mathfrak{E}$ contains the region, where $n$ is the dimension of the space and $\sqrt{n}\mathfrak{E}$ is the dilation of $\mathfrak{E}$ relative to its center of symmetry. The key step of our proof consists in introducing factors $\mu_1(\vec{x},\vec{x'},\tau), \,\ldots,\, \mu_n(\vec{x},\vec{x'},\tau)$ via a change of variable so that 

\begin{equation*}
\mu_1\cdots\mu_n \approx \left\vert \left\{ \vec{v}\,:\, \tilde{b}(\vec{v}) \le \frac{1}{\tau}  \right\}\right\vert.
\end{equation*}

\noindent These factors are chosen to be the length of the axes of the John ellipsoid associated to a symmetrization of the convex region $\left\{ \vec{v}\,:\, \tilde{b}(\vec{v}) \le \frac{1}{\tau}  \right\}.$ We explain this construction in the following subsection.

\bigskip 

\subsection{Construction of the factors $\mu_1\ldots,\mu_n$ }\label{mus}

 Let 

$$R = \left\{ \vec{v}\,:\, \tilde{b}(\vec{v}) \le \frac{1}{\tau}  \right\}.$$

\noindent Notice that since $b$ is convex, so is $\tilde{b},$ and the region R is convex. In order to be able to use John's bounds, we need to show that the set $R$ is also compact. We do so in the following claim. 

\bigskip

\begin{claim}\label{compacidad}
For any $M>0,$ the set $\{\vec{v}\,:\,{\tilde{b}}(\vec{v})\le M \}$ is compact. 

\end{claim}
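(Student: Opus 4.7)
The claim packages together a closedness statement and a boundedness statement, the first of which is immediate since $\tilde{b}$ is a polynomial (hence continuous) and the sublevel set is the preimage of $(-\infty, M]$. The substantive content is thus to establish coercivity of $\tilde{b}$: any lower bound of the form $\tilde{b}(\vec{v}) \to \infty$ as $|\vec{v}| \to \infty$ will trap every sublevel set inside a sufficiently large ball.

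First I would observe that the hypotheses assembled on $\tilde{b}$ in the remark preceding Claim \ref{compacidad} (strict convexity, $\tilde{b}(\vec{0}) = 0$, and $\nabla\tilde{b}(\vec{0}) = \vec{0}$) pin $\vec{0}$ down as the unique global minimum of $\tilde{b}$: any critical point of a strictly convex function is the unique global minimizer, and $\vec{0}$ is such a critical point. Consequently $\tilde{b}(\vec{v}) > 0$ for every $\vec{v} \neq \vec{0}$. Since $\tilde{b}$ is continuous and the unit sphere $\{|\vec{u}| = 1\}$ is compact, the minimum
$$ m \; = \; \min_{|\vec{u}|=1} \tilde{b}(\vec{u}) $$
is attained and therefore strictly positive.

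Next I would upgrade this pointwise positivity on the sphere to a global linear lower bound via a one-line convexity argument. For any $\vec{v}$ with $|\vec{v}| \ge 1$, write $\vec{u} = \vec{v}/|\vec{v}|$ and notice that $\vec{u} = \tfrac{1}{|\vec{v}|}\vec{v} + \bigl(1 - \tfrac{1}{|\vec{v}|}\bigr)\vec{0}$ is a convex combination. Using $\tilde{b}(\vec{0}) = 0$, convexity yields $\tilde{b}(\vec{u}) \le \tilde{b}(\vec{v})/|\vec{v}|$, and rearranging gives $\tilde{b}(\vec{v}) \ge m\,|\vec{v}|$ for all $|\vec{v}| \ge 1$. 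Hence whenever $\tilde{b}(\vec{v}) \le M$ one has $|\vec{v}| \le \max(1, M/m)$, which is exactly the boundedness we need.

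Combined with closedness this finishes the proof. There is essentially no obstacle here; the only detail that warrants care is the step that uses all three defining properties of $\tilde{b}$ simultaneously to conclude $\tilde{b} > 0$ on $\mathbb{R}^n \setminus \{\vec{0}\}$ (so that the minimum $m$ on the sphere is strictly positive). Note that this argument does not invoke the combined-degree hypothesis at all; any strictly convex continuous function with $\tilde{b}(\vec{0}) = 0$ and critical point at $\vec{0}$ would have compact sublevel sets by the same reasoning.
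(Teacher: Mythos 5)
Your proof is correct, and it takes a genuinely different route from the paper's. The paper argues by contradiction: if the sublevel set were unbounded it extracts (via compactness of the unit sphere) a direction $\vec{x}$ along which $\tilde{b}$ is bounded on the whole ray, hence (by convexity, $\tilde{b}(\vec{0})=0$, and $\nabla\tilde{b}(\vec{0})=\vec{0}$) identically zero along that ray; it then invokes the combined-degree structure of $\tilde{b}$ --- specifically that the top-degree terms are the pure powers $v_j^{2m_j}$ with positive coefficients --- to show the one-variable polynomial $t\mapsto\tilde{b}(t\vec{x})$ cannot vanish identically, which yields the contradiction. Your argument is direct: strict convexity plus the critical point at the origin gives $\tilde{b}>0$ off the origin, compactness of the sphere gives a strictly positive minimum $m$ there, and a one-line convexity estimate upgrades this to the linear lower bound $\tilde{b}(\vec{v})\ge m|\vec{v}|$ for $|\vec{v}|\ge 1$. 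Your approach is more elementary and in fact more general, since (as you observe) it needs neither the polynomial structure nor the combined-degree hypothesis --- only strict convexity, continuity, and the critical point at the origin. The paper's approach, by contrast, needs only plain convexity but trades this for reliance on the combined-degree polynomial structure. Both sets of hypotheses are available here, since the remark preceding the claim records that $\tilde{b}$ is strictly convex; so either proof is valid in context, and yours is arguably the cleaner of the two.
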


\bigskip

\begin{proof} 

Notice that since $\tilde{b}$ is convex, $ \tilde{b}(\vec{0}) = 0,$ and $ \nabla \tilde{b}(\vec{0}) = \vec{0},$ then $\tilde{b}\ge 0$ (and $\tilde{b}$ is not the zero polynomial).

\bigskip 

Suppose towards a contradiction that the set $\{\vec{v}\,:\,{\tilde{b}}(\vec{v})\le M \}$ is unbounded. Then, by compactness of the unit ball, there exists a $\vec{x}\in\mathbb{R}^n$ such that $\forall c>0,$ $\tilde{b}(c\vec{x}) \le M.$ In particular, since $\tilde{b}(\vec{0}) = 0$ and $\tilde{b}$ is convex, $\forall c>0,$ $\tilde{b}(c\vec{x}) \equiv 0.$ For $\vec{x} = (x_1,\ldots,x_n)$ as above, define $\vec{w}(t) = t\cdot (x_1,\ldots,x_n).$ Then since $\tilde{b}(c\vec{x}) \equiv 0$ for all $c>0,$ it follows that $\tilde{b}(\vec{w}(t)) \equiv 0$.

\bigskip

On the other hand, since $\tilde{b}$ is of combined degree, its highest order terms corresponds to pure terms. Thus, the highest degree terms of $\tilde{b}(\vec{w}(t))$ are of the form $c_{\alpha}(tx_i)^{2m_i},$ for some $i$'s. In other words, the highest degree terms have coefficients of the form $c_{\alpha}x_i^{2m_i},$ for some $i$'s. The coefficients $c_{\alpha}$ of the highest degree pure terms of $\tilde{b}$ are positive, since $\tilde{b}(0,\ldots,0,tx_i,0,\ldots,0)$ takes positive values. Therefore, the sum of the coefficients $c_{\alpha}x_i^{2m_i}$ of the highest degree terms of $\tilde{b}\circ\vec{w}$ are also positive. Thus, $\tilde{b}\circ\vec{w}$ is not the zero polynomial. This yields the desired contradiction. It follows that  $\{\vec{v}\,:\,{\tilde{b}}(\vec{v})\le M \}$ is bounded, and therefore compact.

\end{proof}

\bigskip

 Recall that by construction $\tilde b(\vec{0}) = 0,$ so the region $R$ contains the origin. We would now like to show that there exists an ellipsoid $\mathfrak{E}$ centered at the origin such that 
 
 $$ \mathfrak{E} \subseteq R \subseteq C\mathfrak{E},$$
 
 \noindent for some independent positive constant $C.$ The existence of such an ellipsoid would follow immediately by John if the region were symmetric (with the origin as center of symmetry). However, we have made no symmetry assumptions on our domain. Nevertheless, we can show the following: 

\bigskip

\begin{claim}\label{cl:simetria}
Let $L$ be any line through the origin. This line $L$ will intersect $R$ in two points. Let $d_1$ be the shortest distance along $L$ from the origin to the boundary of $R,$ and let $d_2$ be the largest distance. Then there exist constants $m,$ $M$ depending only on the degree of the polynomial $b$ such that 

$$ 0 < m\le \frac{d_2}{d_1} \le M <+\infty.$$ 
\end{claim}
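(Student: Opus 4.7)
The plan is to reduce this claim to a one-variable statement and apply the Bruna–Nagel–Wainger Lemma 2.1 (stated on page \pageref{eq:Bruna Nagel Wainger}), exactly as has been done repeatedly in the paper. Parametrize the line $L$ by $t\mapsto t\vec{u}$ for a fixed unit vector $\vec{u}\in\mathbb{R}^n$, and let $p(t):=\tilde{b}(t\vec{u})$. Since $\tilde{b}$ is a convex polynomial of degree $N:=\deg b$ with $\tilde{b}(\vec{0})=0$ and $\nabla\tilde{b}(\vec{0})=\vec{0}$, the restriction $p(t)=\sum_{j=2}^{N}a_j t^j$ is a one-variable convex polynomial with $p(0)=p'(0)=0$. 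By Claim \ref{compacidad} the set $R$ is compact, so $p$ cannot be identically zero, and the intersection $L\cap R$ is a non-degenerate closed interval $[-|t_-|, t_+]\vec{u}$ with $t_+, |t_-|>0$; we have $d_1=\min(t_+,|t_-|)$, $d_2=\max(t_+,|t_-|)$.

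Next, I apply BNW on both half-lines. On $\{t\ge 0\}$ the lemma gives
\begin{equation*}
C_N\sum_{j=2}^{N}|a_j|\,t_+^{j}\;\le\;p(t_+)\;=\;\tfrac{1}{\tau}.
\end{equation*}
For the negative side, consider $q(s):=p(-s)=\sum_{j=2}^{N}(-1)^{j}a_j s^{j}$, which is again a convex polynomial of one variable with $q(0)=q'(0)=0$ and whose absolute coefficients $|(-1)^{j}a_j|=|a_j|$ match those of $p$. Applying BNW to $q$ on $\{s\ge 0\}$ gives
\begin{equation*}
\tfrac{1}{\tau}\;=\;p(-|t_-|)\;=\;q(|t_-|)\;\le\;\sum_{j=2}^{N}|a_j|\,|t_-|^{j}.
\end{equation*}

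Now suppose without loss of generality that $d_2=t_+$ and $d_1=|t_-|$ (the opposite case is handled by swapping $p$ and $q$), and set $r:=d_2/d_1\ge 1$. Since every $j\ge 2$, we have $t_+^{j}=r^{j}|t_-|^{j}\ge r^{2}|t_-|^{j}$. Combining the two displayed inequalities yields
\begin{equation*}
C_N\, r^{2}\sum_{j=2}^{N}|a_j|\,|t_-|^{j}\;\le\;\sum_{j=2}^{N}|a_j|\,|t_-|^{j}.
\end{equation*}
Since $p\not\equiv 0$ we have $\sum_{j=2}^{N}|a_j|>0$ and $|t_-|>0$, so the sum on the right is strictly positive and may be cancelled, giving $r\le 1/\sqrt{C_N}$. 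The lower bound $r\ge 1$ is immediate from the definitions of $d_1$ and $d_2$. Hence the claim holds with $m=1$ and $M=1/\sqrt{C_N}$, which depend only on $N=\deg b$.

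I do not anticipate a serious obstacle. The only points that require verification are that the one-variable polynomial $p$ is non-trivial (handled by Claim \ref{compacidad}), that convexity descends to $p$ and to $q(s)=p(-s)$ (immediate), and that the BNW coefficients align across the two directions through $|(-1)^j a_j|=|a_j|$. Everything else is a short algebraic manipulation, and the ratio $r^{j}\ge r^{2}$ for $r\ge 1$ and $j\ge 2$ is what allows BNW to be leveraged symmetrically between the two sides of the origin.
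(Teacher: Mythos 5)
Your proof is correct and follows essentially the same route as the paper: restrict $\tilde{b}$ (equivalently $\tau\tilde b$) to the line, apply the Bruna--Nagel--Wainger Lemma 2.1 on the far side and the trivial coefficient bound on the near side, and compare the boundary values. Your closing algebra (observing $t_+^j = r^j|t_-|^j \ge r^2|t_-|^j$ for $j\ge 2$ and cancelling the positive sum, yielding $M = C_N^{-1/2}$) is slightly cleaner than the paper's, which isolates the dominant term $|c_k|d_1^k$ and gets $M=\max_k\bigl((N-1)/C_N\bigr)^{1/k}$, and you are more explicit that $d_1$ and $d_2$ lie on opposite rays by introducing $q(s)=p(-s)$; both are welcome touches but not substantively different from the paper's argument.
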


\bigskip

\begin{proof}
Let $h(\vec{v}) = \tau\tilde{b}(\vec{v}).$ Along the line $L,$ the polynomial $h(\vec{v})$ is a polynomial of one variable which we will call $h_L(t).$ This polynomial satisfies $h_L(0) = h_L'(0)=0.$ Write 

$$ h_L(t) =\sum_{j=2}^N c_j t^j.$$ 

\noindent Then there exists some $2\le k \le N,$ and $|c_k| \ne 0,$ such that 

$$ h_L(d_1) \le \sum_{j=2}^N |c_j| d_1^j \le (N-1) |c_k| d_1^k.$$

\noindent On the other hand, it follows from Lemma 2.1 on \cite{BrNaWa88} that there exists a constant $0< C_N \le 1$ such that

$$ h_L(d_2) \ge C_N \sum_{j=2}^N |c_j| d_2^j. $$ 

\noindent For $k$ as above, it follows that 

$$ C_N |c_k| d_2^k \le  C_N \sum_{j=2}^N |c_j| d_2^j \le h_L(d_2).$$

\noindent Moreover, $h_L(d_1) = h_L(d_2) =1,$ since $d_1$ and $d_2$ where chosen as the distances where $L$ intersects the boundary of the region $R = \{ \vec{v}\,:\, h(\vec{v})\le 1\}.$  Thus, 

$$ C_N |c_k| d_2^k \le h_L(d_2) =1= h_L(d_1) \le  (N-1) |c_k| d_1^k.$$

\noindent Therefore, 

$$ \frac{d_2}{d_1} \le \left(\frac{N-1}{C_N}\right)^{\frac{1}{k}}.$$

\noindent On the other hand, since $d_1$ is the shortest distance along $L$ from $R$ to the origin and $d_2$ is the largest, it follows that $d_1 \le d_2.$ Choosing $M = \max_{2\le k\le N}\left\{\left(\frac{N-1}{C_N}\right)^{\frac{1}{k}}\right\}$ and $ m =  1$ it follows that 

$$ m \le  \frac{d_2}{d_1} \le M.$$ 

\noindent This finishes the proof of Claim \ref{cl:simetria}. 

\end{proof}

\bigskip

We have shown that even though the region $R$ is not symmetric, the ratio between rays passing through the origin is bounded by universal constants that only depend on the degree of $b.$ In the following lemma we will show that this is enough to guarantee the existence of an ellipsoid centered at the origin contained in $R$ and such that a dilation by a universal constant contains $R.$ 

\bigskip

\begin{lemma}\label{lem:los mus}

Let $R = \left\{ \vec{v}\,:\, \tilde{b}(\vec{v}) \le \frac{1}{\tau} \right\}$ and $\widetilde{R} = \{ \vec{x} \,:\, -\vec{x} \in R\}.$ Let $\mathfrak{E}$ be the maximal inscribed ellipsoid in the region $R\cap\widetilde{R}.$ Then 

$$ \mathfrak{E} \subseteq R \subseteq M\sqrt{n}\mathfrak{E}$$

\noindent where $M$ is as in Claim \ref{cl:simetria}, and $n$ is the dimension of the space.

\end{lemma}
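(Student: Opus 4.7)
The plan is a two-step reduction. First, I would invoke John's theorem on the symmetrized body $R\cap\widetilde{R}$; then I would use Claim \ref{cl:simetria} to transfer the resulting inclusions back to $R$ itself, paying at most an extra factor of $M$.

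For the first step, $R\cap\widetilde{R}$ is convex as an intersection of convex sets, compact by applying Claim \ref{compacidad} to each of $R$ and $\widetilde{R}$, symmetric about the origin by construction, and contains $\vec{0}$ in its interior since $\tilde{b}(\vec{0})=0<1/\tau$. John's theorem then produces a unique maximal inscribed ellipsoid $\mathfrak{E}$, which by symmetry is centered at the origin, satisfying
$$ \mathfrak{E} \subseteq R\cap\widetilde{R}\subseteq \sqrt{n}\,\mathfrak{E}. $$
The left inclusion immediately gives $\mathfrak{E}\subseteq R$, which is half of the desired estimate.

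For the second step I would establish $R\subseteq M(R\cap\widetilde{R})$; chaining this with the right half of John's estimate yields $R\subseteq M\sqrt{n}\,\mathfrak{E}$ and finishes the proof. Fix $\vec{v}\in R$ and let $L$ be the line through $\vec{0}$ in the direction of $\vec{v}$. Let $d_1\le d_2$ denote the shortest and longest distances from $\vec{0}$ to $\partial R$ along $L$, as in Claim \ref{cl:simetria}; that claim provides a universal constant $M\ge 1$, depending only on the combined degree of $b$, such that $d_2\le M d_1$. Since $\vec{0}\in R$ and $M\ge 1$, convexity of $R$ gives $\vec{v}/M\in R$. To check $\vec{v}/M\in\widetilde{R}$, equivalently $-\vec{v}/M\in R$, note that $|\vec{v}|$ is bounded by the distance from $\vec{0}$ to $\partial R$ in the direction of $\vec{v}$, so $|\vec{v}|\le d_2\le M d_1$; hence $|-\vec{v}/M|=|\vec{v}|/M\le d_1$, and $-\vec{v}/M$ lies on the segment from $\vec{0}$ to the corresponding boundary point of $R$ along the opposite ray, which is contained in $R$ by convexity.

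The only subtlety, and the main obstacle, is the bookkeeping in this second step: one must carefully identify which of the two distances along $L$ plays the role of $d_1$ versus $d_2$ (depending on whether $\vec{v}$ points toward the nearer or farther intersection with $\partial R$), so that Claim \ref{cl:simetria} is applied in the correct direction. Once this is handled, the rest is a textbook application of John's theorem combined with the convexity of $R$.
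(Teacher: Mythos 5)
Your proof is correct and follows essentially the same strategy as the paper's: apply John's theorem to the symmetrized body $R\cap\widetilde{R}$, then use Claim~\ref{cl:simetria}'s bound $d_2\le Md_1$ to show that for any $\vec{v}\in R$ the point $-\vec{v}/M$ also lies in $R$. The only cosmetic difference is that the paper splits into cases according to whether $-\vec{x}\in R$, whereas you handle all $\vec{v}\in R$ uniformly by establishing $R\subseteq M(R\cap\widetilde{R})$ directly, which is slightly cleaner but not substantively different.
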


\bigskip

\begin{proof} By definition, the set $R\cap \widetilde{R}$ is symmetric about the origin. Moreover, since $R$ and $\widetilde{R}$ are compact and convex, their intersection is also compact and convex. It follows from John that there exists an ellipsoid $\mathfrak{E}$ centered at the origin such that 

$$ \mathfrak{E }\subseteq R\cap\widetilde{R} \subseteq \sqrt{n}\mathfrak{E}.$$ 

\bigskip

It is clear that $\mathfrak{E} \subseteq R.$ We would like to show that there is a dilation of $\mathfrak{E}$ which contains $R.$ Let $\vec{x}$ be any point in $R.$ Then, if $-\vec{x}\in R,$ it follows by definition that $x\in\sqrt{n}\mathfrak{E}.$ Now suppose that $-\vec{x} \notin R.$ Let $L$ be the line that goes through the origin and $\vec{x}.$ Using the notation of the previous claim, we have that $|\vec{x}| \le d_2.$ Given $M$ as in Claim \ref{cl:simetria}, let $\rho =\frac{1}{M}.$ Then $|-\rho \vec{x}|\le \rho d_2 \le \rho M d_1 = d_1.$ But since $d_1$ is the  minimum distance from the boundary of $R$ to the origin along line $L,$ it follows that $ -\rho\vec{x}\in R.$ Thus, given any point $\vec{x}\in R$ the point $-\frac{1}{M}\vec{x}$ is also contained in $R$. It follows that

$$\mathfrak{E} \subseteq R \subseteq M\sqrt{n}\mathfrak{E}.$$

\noindent This finishes the proof of Lemma \ref{lem:los mus}.

\end{proof}

\bigskip

 It follows from the previous lemma that

\begin{equation}\label{eq:elmu1}
 Vol(\mathfrak{E}) \approx   \left\vert\left\{ \vec{v}\,:\, \tilde{b}(\vec{v}) \le \frac{1}{\tau} \right\}\right\vert.
 \end{equation}

\noindent  Let $\mu_1,\ldots, \mu_n$ be the lengths of the semi-axes of $\sqrt{n}M\mathfrak{E},$ indexed so that $\mu_1 \ge \mu_2 \ge \ldots \ge \mu_n.$ Then,

\begin{equation}\label{eq:eleccion mus}
\mu_1\cdots\mu_n \approx  \left\vert\left\{ \vec{v}\,:\, \tilde{b}(\vec{v}) \le \frac{1}{\tau} \right\}\right\vert.
\end{equation}

\noindent In equations (\ref{eq:elmu1}) and (\ref{eq:eleccion mus}), the constant depends only on the combined degree of $b$ and the dimension of the space.

\bigskip

\subsection{Proof of the Main Theorem}

We are now ready to present the proof of the Main Theorem. For the reader's convenience, we have divided the proof into three subsections, corresponding to a bound in terms of $\delta,$ a bound in terms of $\widetilde{b}(\vec{y}-\vec{y'})$ and a bound in terms of $w$. We finish by combining all three bounds to obtain the estimate stated in the Main Theorem. It will be convenient in the course of the proof of all three bounds to rearrange the terms of the integral expression for the Szeg\H{o} kernel obtained in Theorem \ref {prop:integral szego1} as follows: 

\bigskip

Making the change of variables $\vec{v} \rightarrow \vec{v}+\frac{\vec{x}}{2}+\frac{\vec{x'}}{2}$  to get rid of the term $ e^{2\pi\vec{\eta}\cdot (\vec{x}+\vec{x'})}$ in the original expression obtained for the Szeg\H{o} kernel given by

\begin{equation*}
S((\vec{x},\vec{y},t);(\vec{x'},\vec{y'},t')) = 
{\scaleint{7ex}_{\bs 0}^{\infty}} e^{-2\pi\tau [b(\vec{x'}) + b(\vec{x}) + i(t'-t)]} 
\left({\scaleint{7ex}_{\bs \mathbb R^n}} \frac{e^{2\pi \vec{\eta} \cdot [\vec{x}+\vec{x'} - i(\vec{y'}-\vec{y})] }   }{\displaystyle\int_{\mathbb R^n} e^{4\pi [\vec\eta \cdot \vec{v}  - b(\vec{v})\tau]} \, d\vec{v}} \, d\vec{\eta} \,\right) d\tau
\end{equation*}

\noindent it follows that

\begin{equation*}
S = {\scaleint{7ex}_{\bs 0}^{\infty}} e^{-2\pi\tau [b(\vec{x'}) + b(\vec{x}) + i(t'-t)]} {\scaleint{7ex}_{\bs \mathbb R^n}} \frac{e^{2\pi i \vec{\eta} \cdot (\vec{y}-\vec{y'})}}{\displaystyle\int_{\mathbb R^n} e^{4\pi \left[\vec{\eta}\cdot \vec{v} - \tau b\left(\vec{v}+\frac{\vec{x}+\vec{x'}}{2}\right)\right]} \, d\vec{v}} \, d\vec{\eta}\, d\tau. 
\end{equation*}

We will modify the denominator integral so as to change it into an integral of the form $[\theta(\vec{\eta})]^{-1},$ where $\theta$ is the function studied in Lemma \ref{lem:Schwartz2general m}. In particular, the exponent of the denominator integral must be of the form $\vec{\eta}\cdot\vec{v} - g(\vec{v}),$ where $g(\vec{0}) = 0$ and $\nabla g(\vec{0}) = 0.$ We make the change of variables $\vec{\eta} \rightarrow \vec{\eta} + \nabla b\left(  \frac{\vec{x}+\vec{x'}}{2}\right) \tau$ so that 

\begin{equation*}\begin{split}
S = &\,
{\scaleint{7ex}_{\bs 0}^{\infty}} e^{-2\pi\tau [b(\vec{x'}) + b(\vec{x}) + i(t'-t)]}  e^{4\pi \tau \, b\left(\frac{\vec{x}+\vec{x'}}{2}\right) }  e^{2\pi i \tau \nabla b \left(  \frac{\vec{x}+\vec{x'}}{2}\right) \cdot (\vec{y}-\vec{y'}) } {\scaleint{7ex}_{\bs \mathbb R^n}} \frac{e^{2\pi i\vec{\eta}\cdot (\vec{y}-\vec{y'}) }}{\displaystyle\int_{\mathbb R^n} e^{4\pi [\vec{\eta}\cdot \vec{v} -\tau\tilde{b}(\vec{v})]} \, d\vec{v}} \, d\vec{\eta}\, d\tau,\\
\end{split}\end{equation*}

\noindent where $\tilde{b}(\vec{v})$ is as in equation (\ref{eq:b tilde}) and the term $b\left(\frac{\vec{x}+\vec{x'}}{2}\right)$ has been added so that $\tilde{b}(\vec{0}) = 0.$ With $\delta(\vec{x},\vec{x'})$ and $w$ be as in equations (\ref{eq:delta}) and (\ref{eq:w}), it follows that

\begin{equation}\label{eq:con im}
S = {\scaleint{7ex}_{\bs 0}^{\infty}} e^{-2\pi\tau \delta}  e^{-2\pi \tau i w}  {\scaleint{7ex}_{\bs \mathbb R^n}} \frac{e^{2\pi i\vec{\eta}\cdot (\vec{y}-\vec{y'}) }}{\displaystyle\int_{\mathbb R^n} e^{4\pi [\vec{\eta}\cdot \vec{v} -\tau\tilde{b}(\vec{v})]} \, d\vec{v}} \, d\vec{\eta}\, d\tau.
\end{equation}

\bigskip

Notice that since $b$ is strictly convex, $\tau\tilde{b}$ is also strictly convex. Moreover, if $b$ is of combined degree $(m_1,\ldots,m_n),$ so is $\tau\tilde{b}.$ Thus, $\tau\tilde{b}$ is a strictly convex polynomial satisfying conditions (\ref{itm:gorigen}), (\ref{itm:ggrad}) and (\ref{itm:grado}) of Lemma \ref{lem:Schwartz2general m}. It remains to renormalize $\tau\tilde{b}$ so that it also satisfies condition (\ref{itm:John}). 

\bigskip

With $\mu_1,\ldots,\mu_n$ chosen as in equation (\ref{eq:eleccion mus}), let 

\begin{equation}\label{eq:eleccion de g}
 g(\vec{v}) = \tau\tilde{b}(\vec{\mu}\vec{v}).
 \end{equation}

\noindent Here, $\vec{\mu}\vec{v} = [\mu_1v_1\,\, \mu_2v_2\,\,\ldots\,\, \mu_nv_n]^T.$  Notice that since $ \mathfrak{E} \subseteq \left\{ \vec{v}\,:\, \tau\tilde{b}(\vec{v}) \le 1\right\} \subseteq \sqrt{n}M\mathfrak{E}$ and letting $A= (\sqrt{n}M)^{-1}$ condition (\ref{itm:John}) of Lemma \ref{lem:Schwartz2general m} is satisfied.

\bigskip

 By making the change of variables $\vec{v}\rightarrow \vec{\mu}\vec{v}$ so as to introduce the factors  $\mu_1,\ldots,\mu_n$ in the denominator integral of equation (\ref{eq:con im}), as well as  the change of variables $\vec{\eta} \rightarrow \frac{\vec{\eta}}{\vec{\mu}}$ we have that

 \begin{equation}\label{eq:con todo}
S  ={\scaleint{7ex}_{\bs 0}^{\infty}} \frac{e^{-2\pi\tau\delta} e^{-2\pi\tau iw} }{\mu_1^2\cdots\mu_n^2} {\scaleint{7ex}_{\bs \mathbb R^n}} \frac{e^{2\pi i\vec{\eta}\cdot \left(\frac{\vec{y}-\vec{y'}}{\vec{\mu}}\right) }}{\displaystyle\int_{\mathbb R^n} e^{4\pi [\vec{\eta}\cdot \vec{v} -\tau\tilde{b}(\vec{\mu}\vec{v})]} \, d\vec{v}} \, d\vec{\eta}\, d\tau.
\end{equation}

\bigskip

\subsubsection{The bound in terms of $\delta$}\label{delta}

\begin{proposition}\label{thm: S original 1}

 Let $(\vec{x},\vec{y},t)$ and $(\vec{x'},\vec{y'},t')$ be any two points in $\partial\Omega_b.$ Then, 

\begin{equation*}
\left\vert S\left((\vec{x},\vec{y},t);(\vec{x'},\vec{y'},t')\right)\right\vert \lesssim \frac{1}{\delta | \{ \vec{v} \,:\, \tilde{b}(\vec{v}) <\delta \} |^2},
\end{equation*}

\noindent where the constant may depend on the combined degree of $b$ and the dimension of the space, but is independent of the two given points. 

\end{proposition}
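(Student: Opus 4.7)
The plan is to bound $|S|$ by putting absolute values inside the triple integral in equation (\ref{eq:con todo}), then use Lemma \ref{lem:Schwartz2general m} to dispose of the inner $\vec{\eta}$-integral and reduce to a one-dimensional integral in $\tau$, and finally to estimate that integral by splitting at $\tau=1/\delta$.

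First, note that the exponentials $e^{-2\pi\tau i w}$ and $e^{2\pi i\vec{\eta}\cdot (\vec{y}-\vec{y'})/\vec{\mu}}$ have modulus one, so from (\ref{eq:con todo})
\begin{equation*}
|S| \le {\scaleint{7ex}_{\bs 0}^{\infty}} \frac{e^{-2\pi\tau\delta}}{\mu_1^2\cdots\mu_n^2}\,\biggl( {\scaleint{7ex}_{\bs \mathbb{R}^n}} \frac{d\vec{\eta}}{\displaystyle\int_{\mathbb{R}^n} e^{4\pi[\vec{\eta}\cdot\vec{v} - \tau\tilde{b}(\vec{\mu}\vec{v})]}\,d\vec{v}}\biggr)\,d\tau.
\end{equation*}
The polynomial $g(\vec{v})=\tau\tilde{b}(\vec{\mu}\vec{v})$ satisfies the hypotheses of Lemma \ref{lem:Schwartz2general m}: $g(\vec{0})=0$, $\nabla g(\vec{0})=\vec{0}$, the combined degree equals $(m_1,\ldots,m_n)$, and by our choice of the $\mu_j$ coming from the John ellipsoid (Lemma \ref{lem:los mus}) the level set $\{g\le 1\}$ is sandwiched between balls of radii $A = 1/(\sqrt{n}M)$ and $1$. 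After the harmless substitution $\vec{v}\mapsto\vec{v}/(4\pi)$, Lemma \ref{lem:Schwartz2general m} furnishes a Schwartz function whose integral over $\mathbb{R}^n$ is bounded by a constant depending only on $\{m_1,\ldots,m_n\}$ and $n$. Consequently the inner parenthesis is $\lesssim 1$ uniformly in $\tau$, so
\begin{equation*}
|S| \lesssim \int_0^\infty \frac{e^{-2\pi\tau\delta}}{\mu_1^2\cdots\mu_n^2}\,d\tau.
\end{equation*}

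Next, invoke the defining relation (\ref{eq:eleccion mus}), namely $\mu_1\cdots\mu_n \approx |\{\vec{v}:\tilde{b}(\vec{v})\le 1/\tau\}|$, to get
\begin{equation*}
|S| \lesssim \int_0^\infty \frac{e^{-2\pi\tau\delta}}{|\{\vec{v}:\tilde{b}(\vec{v})\le 1/\tau\}|^2}\,d\tau.
\end{equation*}
Now split this integral at $\tau=1/\delta$. On $(0,1/\delta]$, the monotonicity $|\{\tilde{b}\le 1/\tau\}|\ge |\{\tilde{b}\le\delta\}|$ is immediate, and $\int_0^{1/\delta}e^{-2\pi\tau\delta}\,d\tau\le 1/(2\pi\delta)$, which yields the desired bound on that piece.

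The main obstacle is the range $\tau>1/\delta$, where the level set shrinks and we must exploit the exponential decay to compensate. Here I will use that $\tilde{b}$ is convex with $\tilde{b}(\vec{0})=0$: for $c\in(0,1)$, $\tilde{b}(c\vec{v})\le c\,\tilde{b}(\vec{v})$. Taking $c = 1/(\tau\delta)<1$ shows that $c\cdot\{\vec{v}:\tilde{b}(\vec{v})\le\delta\}\subseteq\{\vec{v}:\tilde{b}(\vec{v})\le 1/\tau\}$, so
\begin{equation*}
|\{\vec{v}:\tilde{b}(\vec{v})\le 1/\tau\}| \ge \frac{1}{(\tau\delta)^n}\,|\{\vec{v}:\tilde{b}(\vec{v})\le\delta\}|.
\end{equation*}
Plugging this into the tail integral and substituting $s=\tau\delta$ gives
\begin{equation*}
\int_{1/\delta}^\infty \frac{e^{-2\pi\tau\delta}}{|\{\tilde{b}\le 1/\tau\}|^2}\,d\tau \le \frac{1}{|\{\tilde{b}\le\delta\}|^2}\cdot\frac{1}{\delta}\int_1^\infty s^{2n}e^{-2\pi s}\,ds \lesssim \frac{1}{\delta\,|\{\tilde{b}\le\delta\}|^2},
\end{equation*}
and combining the two pieces gives the claim.
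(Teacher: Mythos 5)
Your proof is correct, and it follows the same strategy as the paper's proof: take absolute values in equation (\ref{eq:con todo}), invoke Lemma \ref{lem:Schwartz2general m} to make the $\vec{\eta}$-integral a uniformly bounded constant, use $\mu_1\cdots\mu_n \approx |\{\tilde{b}\le 1/\tau\}|$ from the John construction, and then control the remaining one-dimensional $\tau$-integral. The only substantive difference is in the last step: the paper decomposes the $\tau$-integral dyadically over $\tau\delta\in[2^j,2^{j+1}]$ for $j\in\mathbb{Z}$, bounds each piece, and then splits the resulting series into $j<0$ (where monotonicity of the level set gives the needed inequality) and $j\ge 0$ (where Claim \ref{cl:B-M} is used). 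You instead split at $\tau=1/\delta$: on $(0,1/\delta]$ you use monotonicity, and on $(1/\delta,\infty)$ you directly reprove the content of Claim \ref{cl:B-M} from the convexity of $\tilde{b}$ and absorb the resulting factor $(\tau\delta)^{2n}$ into the exponential. This is a cleaner execution of the same idea: both arguments reduce the tail to an integral of the form $\delta^{-1}\int_1^\infty s^{2n}e^{-2\pi s}\,ds$, which is what the paper's dyadic sum computes after splitting. Your binary split avoids the bookkeeping of the full dyadic decomposition without losing anything; the only cost is that the two ranges are handled by slightly different inequalities, whereas the dyadic decomposition treats every block uniformly before specializing. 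Both are valid; yours is arguably the more transparent of the two.

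One small stylistic point: the ``harmless substitution $\vec{v}\mapsto\vec{v}/(4\pi)$'' you mention to remove the factor of $4\pi$ is not quite the right rescaling by itself (it changes the polynomial $g$ as well), but the issue is genuinely harmless and the paper silently suppresses the $4\pi$ in exactly the same way; a rescaling of $\vec{\eta}$ together with the observation that Lemma \ref{lem:Schwartz2general m} applies with $A$ replaced by a comparable constant handles it.
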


\bigskip

\begin{proof}\label{demo}

It follows from equation (\ref{eq:con todo}) that

\begin{equation*}
|S| \le {\scaleint{7ex}_{\bs 0}^{\infty}} \frac{e^{-2\pi\tau \delta }}{\mu_1^2\cdots \mu_n^2}   
{\scaleint{7ex}_{\bs \mathbb R^n}} \frac{1 }{\displaystyle\int_{\mathbb R^n} e^{4\pi [\vec{\eta}\cdot \vec{v} -\tau\tilde{b}(\mu\vec{v})]} \, d\vec{v}} \, d\vec{\eta}\,d\tau.
\end{equation*}

\noindent But since $\mu_1\cdots\mu_n \approx  \left\vert\left\{ \vec{v}\,:\, \tilde{b}(\vec{v}) \le \frac{1}{\tau} \right\}\right\vert,$ it follows that

\begin{equation}\label{eq:con im 2}
|S| \lesssim {\scaleint{7ex}_{\bs 0}^{\infty}} \frac{e^{-2\pi\tau \delta }}{ \left\vert\left\{ \vec{v}\,:\, \tilde{b}(\vec{v}) \le \frac{1}{\tau}  \right\}\right\vert^2  }
{\scaleint{7ex}_{\bs \mathbb R^n}} \frac{1 }{\displaystyle\int \limits_{\mathbb R^n} e^{4\pi [\vec{\eta}\cdot \vec{v} -\tau\tilde{b}(\mu\vec{v}) ]} \, d\vec{v}} \, d\vec{\eta}\,d\tau.
\end{equation}

\noindent By Lemma \ref{lem:Schwartz2general m} the reciprocal of the denominator integral is Schwartz, and the decay is independent of the coefficients of $g.$ In particular, the decay does not depend on $\tau,$ and $\int_{\mathbb{R}^n}  \theta(\vec{\eta}) \, d\vec{\eta}$ converges. Hence,  

\begin{equation*}
|S|\lesssim {\scaleint{7ex}_{\bs 0}^{\infty}} \frac{e^{-2\pi\tau \delta }}{ \left\vert\left\{ \vec{v}\,:\, \tilde{b}(\vec{v}) \le \frac{1}{\tau}  \right\}\right\vert^2  } \, d\tau.
\end{equation*}

\noindent We can write 

\begin{equation*}
{\scaleint{7ex}_{\bs 0}^{\infty}} \frac{e^{-2\pi\tau \delta }}{ \left\vert\left\{ \vec{v}\,:\, \tilde{b}(\vec{v}) \le \frac{1}{\tau} \right\}\right\vert^2  } \, d\tau = {\sum_{j=-\infty}^{\infty}}\,  {\scaleint{6ex}_{\bs \tau\delta=2^j}^{\tau\delta=2^{j+1}}}  \frac{e^{-2\pi\tau\delta}}{ \left\vert\left\{ \vec{v}\,:\, \tilde{b}(\vec{v}) \le \frac{1}{\tau}  \right\}\right\vert^2 }\,d\tau .
\end{equation*}

\noindent Thus, 

\begin{equation*}
|S| \lesssim \sum_{j=-\infty}^{\infty} \frac{e^{-2\pi2^j}}{ \left\vert\left\{ \vec{v}\,:\, \tilde{b}(\vec{v}) \le \frac{\delta}{2^{j+1}}  \right\}\right\vert^2 } \scaleint{6ex}_{\bs \tau = 2^j\delta^{-1}}^{\tau = 2^{j+1}\delta^{-1}}\, d\tau = \sum_{j=-\infty}^{\infty} \frac{2^je^{-2\pi2^j}}{\delta \left\vert\left\{ \vec{v}\,:\, \tilde{b}(\vec{v}) \le \frac{\delta}{2^{j+1}}  \right\}\right\vert^2 }.
\end{equation*}

\noindent In order to get rid of the dependence on $j$ of $\left\vert\left\{ \vec{v}\,:\, \tilde{b}(\vec{v}) \le \frac{\delta}{2^{j+1}}  \right\}\right\vert$ we can write 

\begin{equation}\label{eq:dividida}
|S| \lesssim \sum_{-\infty < j<0} \frac{e^{-2\pi2^j}2^j}{\delta|\{\vec{v}\,:\,\tilde{b}(\vec{v}) \le \delta  \}|^2} +\sum_{0\le j<\infty} \frac{e^{-2\pi2^j}2^j}{\delta\left\vert\left\{\vec{v}\,:\,\tilde{b}(\vec{v}) \le \frac{\delta}{2^{j+1}}  \right\}\right\vert^2 }.
\end{equation}

\noindent But for $j\ge 0$ we have that (see Claim \ref{cl:B-M} in the Appendix)

\begin{equation*}
|\{\vec{v}\,:\, \tilde{b}(\vec{v}) \le \delta  \}| \le 2^{n(j+1)} \left\vert\left\{\vec{v}\,:\,\tilde{b}(\vec{v}) \le \frac{\delta}{2^{j+1}} \right\}\right\vert.
\end{equation*}

\noindent It follows that 

\begin{equation*}
|S| \lesssim  \frac{1}{\delta|\{\vec{v}\,:\,\tilde{b}(\vec{v}) \le \delta  \}|^2} \left( \sum_{-\infty < j<0} \,e^{-2^{j+1}\pi}2^j+\sum_{0\le j<\infty}\, e^{-2^{j+1}\pi}2^{2n(j+1)+j}\right).
\end{equation*}

\noindent Since both sums converge, we obtain the desired estimate. 

\end{proof}

\subsubsection{The bound in terms of  $\widetilde{b}(\vec{y}-\vec{y'})$}\label{segundo}

\begin{proposition}\label{thm: S original con y}

 Let $(\vec{x},\vec{y},t)$ and $(\vec{x'},\vec{y'},t')$ be any two points in $\partial\Omega_b.$ Then

\begin{equation*}
\left\vert S\left((\vec{x},\vec{y},t);(\vec{x'},\vec{y'},t')\right)\right\vert \lesssim \frac{1}{ \widetilde{b}(\vec{y}-\vec{y'})| \{ \vec{v} \,:\, \tilde{b}(\vec{v}) < \widetilde{b}(\vec{y}-\vec{y'})\} |^2},
\end{equation*}

\noindent where the constant may depend on the combined degree of $b$ and the dimension of the space, but is independent of the two given points. 

\end{proposition}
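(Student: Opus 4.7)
The plan is to adapt the proof of Proposition~\ref{thm: S original 1}, starting from the rearranged form (\ref{eq:con todo}) for $S$. In that argument the exponential damping $e^{-2\pi\tau\delta}$ was the sole source of decay in $\tau$; here $\delta$ is absent from the target bound, so the decay must instead be extracted from the oscillatory factor $e^{2\pi i\vec\eta\cdot(\vec{y}-\vec{y'})/\vec\mu}$ inside the $\vec\eta$-integral. By the choice of $\mu_j$ in (\ref{eq:eleccion de g}), the polynomial $g_\tau(\vec v)=\tau\widetilde{b}(\vec\mu\vec v)$ satisfies hypotheses (i)--(iv) of Lemma~\ref{lem:Schwartz2general m} with $A=(\sqrt{n}M)^{-1}$ independent of $\tau$. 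Hence $\theta_{g_\tau}(\vec\eta)=\bigl[\int_{\mathbb{R}^n}e^{4\pi[\vec\eta\cdot\vec v-g_\tau(\vec v)]}d\vec v\bigr]^{-1}$ is Schwartz with seminorms uniformly bounded in $\tau$, so that for every $N\in\mathbb{N}$ there is $C_N$ with $\bigl|\int e^{2\pi i\vec\eta\cdot\vec\xi}\theta_{g_\tau}(\vec\eta)\,d\vec\eta\bigr|\le C_N(1+|\vec\xi|^2)^{-N}$. Passing to absolute values in (\ref{eq:con todo}), discarding the unimodular phases, and using $\mu_1\cdots\mu_n\approx |R_\tau|$ with $R_\tau=\{\widetilde{b}\le 1/\tau\}$, we arrive at
\begin{equation*}
|S|\lesssim \int_0^\infty\frac{(1+|\vec\xi_\tau|^2)^{-N}}{|R_\tau|^2}\,d\tau,\qquad \vec\xi_\tau=(\vec{y}-\vec{y'})/\vec\mu(\tau).
\end{equation*}

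Next I decompose $(0,\infty)=\bigcup_{k\in\mathbb{Z}}I_k$ with $I_k=(2^{k-1}\tau_0,2^k\tau_0]$ and $\tau_0=1/\widetilde{b}(\vec{y}-\vec{y'})$. On the pieces with $k\le 0$ we have $\tau\le\tau_0$, so monotonicity gives $R_\tau\supseteq R_{\tau_0}=\{\widetilde{b}\le\widetilde{b}(\vec{y}-\vec{y'})\}$ and $|R_\tau|^{-2}\le |R_{\tau_0}|^{-2}$; using the trivial bound $(1+|\vec\xi_\tau|^2)^{-N}\le 1$, each piece contributes at most $2^{k-1}\tau_0/|R_{\tau_0}|^2$, and $\sum_{k\le 0}2^{k-1}\tau_0\lesssim\tau_0=1/\widetilde{b}(\vec{y}-\vec{y'})$, which is exactly the required bound for this range.

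For $k\ge 1$ the Fourier decay of $\theta_{g_\tau}$ becomes essential. Claim~\ref{cl:B-M} gives $|R_\tau|\gtrsim 2^{-kn}|R_{\tau_0}|$ on $I_k$, hence $|R_\tau|^{-2}\lesssim 2^{2kn}|R_{\tau_0}|^{-2}$. I intend to prove further the quantitative semi-axis scaling
\begin{equation*}
\mu_j(2^k\tau_0)\lesssim 2^{-k/(2m_j)}\mu_j(\tau_0),\qquad k\ge 0,\quad j=1,\ldots,n.
\end{equation*}
Since $\vec{y}-\vec{y'}$ lies on $\partial R_{\tau_0}$ and thus outside the interior of the inner John ellipsoid at scale $\tau_0$, we have $|\vec\xi_{\tau_0}|\gtrsim 1$; the scaling above then forces $|\vec\xi_\tau|^2=\sum_j(y_j-y'_j)^2/\mu_j(\tau)^2\gtrsim 2^{k/\max_j m_j}|\vec\xi_{\tau_0}|^2\gtrsim 2^{k/\max_j m_j}$ on $I_k$. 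Each dyadic piece therefore contributes at most $2^{k(2n+1)-kN/\max_j m_j}\tau_0/|R_{\tau_0}|^2$, and choosing $N>(2n+1)\max_j m_j$ makes the geometric series in $k\ge 1$ sum to the desired bound, completing the argument.

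The main obstacle will be establishing the semi-axis scaling $\mu_j(2^k\tau_0)\lesssim 2^{-k/(2m_j)}\mu_j(\tau_0)$. This is where the combined-degree hypothesis enters decisively: Definition~\ref{def:``combined degree''}(2) guarantees that the top-order contribution in direction $j$ is the pure term $v_j^{2m_j}$, which is precisely what allows each individual semi-axis to shrink at the expected polynomial rate as the level set contracts. I anticipate proving this via the reverse John containment of Lemma~\ref{lem:los mus} together with the one-variable Bruna--Nagel--Wainger inequality (\ref{eq:Bruna Nagel Wainger}) applied to $\widetilde{b}$ restricted along suitable rays, combined with the uniform coefficient control from Corollary~\ref{cor:coeficientes acotados}.
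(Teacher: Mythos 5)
Your overall architecture matches the paper's: start from the rearranged integral \eqref{eq:con todo}, drop the $e^{-2\pi\tau\delta}$ factor using $\delta>0$, invoke Lemma~\ref{lem:Schwartz2general m} to get uniform Schwartz decay of $\hat\theta$, split $(0,\infty)$ dyadically with $\tau_0 = 1/\widetilde{b}(\vec{y}-\vec{y'})$, handle the small-$\tau$ pieces with $|\hat\theta|\lesssim 1$, and handle the large-$\tau$ pieces by combining Claim~\ref{cl:B-M} with the Fourier decay. All of that is exactly what the paper does. The divergence is at the key step of extracting decay in $j$ for $j\ge 0$. The paper's mechanism is more direct than yours: since $\tau\widetilde{b}(\vec\mu\vec s)$ satisfies, by construction of $\vec\mu$, all the hypotheses of Lemma~\ref{lem:Schwartz2general m}, Claim~\ref{cl:bound g sin coefs en 2 dim} gives $\tau\widetilde{b}(\vec\mu\vec s) \le C(1 + s_1^{2m_1}+\cdots+s_n^{2m_n})$ with a uniform constant. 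Evaluated at $\vec s = \vec\gamma/\vec\mu$ this reads $\tau\widetilde{b}(\vec\gamma) \le C + |p(\vec\gamma/\vec\mu)|$ for an explicit polynomial $p$. Since on the $j$-th dyadic piece $2^j\le\tau\widetilde{b}(\vec\gamma)$, and since $\hat\theta$ decays faster than $(C+|p(\cdot)|)^{-N}$ for every $N$, one reads off $|\hat\theta(\vec\gamma/\vec\mu)|\lesssim 2^{-Nj}$ immediately, with no need to understand how the individual axes of the John ellipsoid evolve in $\tau$.

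Your proposed route through a semi-axis scaling law $\mu_j(2^k\tau_0)\lesssim 2^{-k/(2m_j)}\mu_j(\tau_0)$ is both harder to prove and, as written, not well-posed: in the paper the $\mu_j$ are ordered by size ($\mu_1\ge\cdots\ge\mu_n$), not attached to coordinate directions, so the pairing with the exponents $m_j$ in your claimed estimate is not stable across scales. (Consider $\widetilde{b}(v_1,v_2)=v_1^2+v_2^4$: across $\tau=1$ the ordering of the two semi-axes swaps, and the ``$j$-th'' semi-axis no longer tracks the ``$j$-th'' coordinate.) What your argument actually needs is only $\max_j \mu_j(\tau)/\mu_j(\tau_0)\lesssim 2^{-k/(2\max_l m_l)}$, which does survive re-indexing, but establishing even that quantitative statement about John ellipsoid axes — via BNW along coordinate rays plus coefficient control, as you anticipate — is a real piece of work that the paper's use of Claim~\ref{cl:bound g sin coefs en 2 dim} avoids entirely. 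Your observation that $|\vec\xi_{\tau_0}|\gtrsim 1$ because $\vec\gamma\in\partial R_{\tau_0}$ lies outside the interior of the inner John ellipsoid is correct and a nice touch, but it is also unnecessary once one uses the paper's polynomial domination, which already builds in the correct normalization. I would recommend replacing your semi-axis scaling step with a direct appeal to Claim~\ref{cl:bound g sin coefs en 2 dim} and the bound $2^j\le\tau\widetilde{b}(\vec\gamma)$; everything else in your outline then goes through as you have it.
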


\begin{proof}

We had shown in equation (\ref{eq:con todo}) on page \pageref{eq:con todo} that

\begin{equation*}
S  = {\scaleint{7ex}_{\bs 0}^{\infty}} \frac{e^{-2\pi\tau\delta} e^{-2\pi\tau iw } }{\mu_1^2\cdots\mu_n^2} {\scaleint{7ex}_{\bs \mathbb R^n}} \frac{e^{2\pi i\vec{\eta}\cdot \left(\frac{\vec{y}-\vec{y'}}{\vec{\mu}}\right) }}{\displaystyle\int \limits_{\mathbb R^n} e^{4\pi [\vec{\eta}\cdot \vec{v} -\tau\tilde{b}(\vec{\mu}\vec{v})]} \, d\vec{v}} \, d\vec{\eta}\, d\tau.
\end{equation*}

\noindent Thus, and since $\delta > 0,$ 

\begin{equation*}
|S|  \le {\scaleint{7ex}_{\bs 0}^{\infty}} \frac{1 }{\mu_1^2\cdots\mu_n^2}\left\vert\,\, {\scaleint{7ex}_{ \bs \mathbb R^n}} \frac{e^{2\pi i\vec{\eta}\cdot \left(\frac{\vec{y}-\vec{y'}}{\vec{\mu}}\right) }}{\displaystyle\int_{\mathbb R^n} e^{4\pi [\vec{\eta}\cdot \vec{v} -\tau\tilde{b}(\vec{\mu}\vec{v})]} \, d\vec{v}} \, d\vec{\eta}\,\right\vert \,\, d\tau.
\end{equation*}

\noindent But by Lemma \ref{lem:Schwartz2general m}, the reciprocal of the denominator integral is Schwartz. Moreover, its decay is independent of  $\tau$ and the coefficients of $b.$ The same is true of its Fourier transform, $\hat{\theta}.$  We can write

\begin{equation*}
|S| \le \scaleint{6ex}_{\bs 0}^{\infty} \frac{1}{\mu_1^2\cdots \mu_n^2} \left\vert \hat{\theta}\left( \frac{\vec{y}-\vec{y'}}{\vec{\mu}}\right) \right\vert \, d\tau.
\end{equation*}

\noindent And as in the previous bound, we can write 

\begin{equation*}
|S| \lesssim \scaleint{7ex}_{\bs 0}^{\infty} \frac{1}{ \left|\left\{ \vec{v}\,:\, \widetilde{b}(\vec{v}) \le \frac{1}{\tau} \right\}\right|^2} \left\vert \hat{\theta}\left( \frac{\vec{y}-\vec{y'}}{\vec{\mu}}\right) \right\vert \, d\tau.
\end{equation*}

\noindent Let $\vec{\gamma} = \vec{y}- \vec{y'}.$ We can split the interval of integration into dyadic intervals in the following way: 

\begin{equation*}
\scaleint{7ex}_{\bs 0}^{\infty} \frac{1}{ \left|\left\{ \vec{v}\,:\, \widetilde{b}(\vec{v}) \le \frac{1}{\tau} \right\}\right|^2} \left\vert \hat{\theta}\left( \frac{\vec{y}-\vec{y'}}{\vec{\mu}}\right) \right\vert \, d\tau = \sum_{j=-\infty}^{\infty} {\scaleint{6ex}_{\bs \, 2^j (\widetilde{b}(\vec{\gamma}))^{-1} }^{\, 2^{j+1}(\widetilde{b}(\vec{\gamma}))^{-1}}} \frac{1}{ \left|\left\{ \vec{v}\,:\, \widetilde{b}(\vec{v}) \le \frac{1}{\tau} \right\}\right|^2} \left\vert \hat{\theta}\left( \frac{\vec{\gamma}}{\vec{\mu}}\right) \right\vert \, d\tau.
\end{equation*}

\noindent But since in each interval $\dfrac{\widetilde{b}(\vec{\gamma})}{2^{j+1}} \le \dfrac{1}{\tau},$ It follows that

\begin{equation*}\begin{split}
|S|\,  \lesssim&\, \sum_{-\infty < j <0} \scaleint{6ex}_{\bs \, 2^j (\widetilde{b}(\vec{\gamma}))^{-1}}^{\, 2^{j+1}(\widetilde{b}(\vec{\gamma}))^{-1}}  \frac{1}{ \left|\left\{ \vec{v}\,:\, \widetilde{b}(\vec{v}) \le  \frac{\widetilde{b}(\vec{\gamma})}{2^{j+1}}  \right\}\right|^2} \left\vert \hat{\theta}\left( \frac{\vec{\gamma}}{\vec{\mu}}\right) \right\vert \, d\tau\\ &+\, \sum_{0\le j <\infty} \scaleint{6ex}_{\bs \,2^j(\widetilde{b}(\vec{\gamma}))^{-1} }^{\,2^{j+1}(\widetilde{b}(\vec{\gamma}))^{-1}}  \frac{1}{ \left|\left\{ \vec{v}\,:\, \widetilde{b}(\vec{v}) \le  \frac{\widetilde{b}(\vec{\gamma})}{2^{j+1}}  \right\}\right|^2} \left\vert \hat{\theta}\left( \frac{\vec{\gamma}}{\vec{\mu}}\right) \right\vert \, d\tau.\\ 
\end{split}\end{equation*}

\noindent As in the previous bound, for $j<0$ we have that $ \left\vert \left\{ \vec{v}\,:\, \widetilde{b}(\vec{v}) \le \dfrac{\widetilde{b}(\vec{\gamma})}{2^{j+1}} \right\} \right\vert^{-1}  \le \left\vert \left\{ \vec{v}\,:\, \widetilde{b}(\vec{v}) \le \widetilde{b}(\vec{\gamma}) \right\} \right\vert^{-1},$ and for $j\ge 0,$ 

\begin{equation*}
 \left\vert \left\{ \vec{v}\,:\, \widetilde{b}(\vec{v}) \le \frac{\widetilde{b}(\vec{\gamma})}{2^{j+1}} \right\} \right\vert^{-1}  \le 2^{n(j+1)} \left\vert \left\{ \vec{v}\,:\, \widetilde{b}(\vec{v}) \le \widetilde{b}(\vec{\gamma}) \right\} \right\vert^{-1}.
\end{equation*}

\noindent Hence, 

\begin{equation*}\begin{split}
|S|&\,  \lesssim \sum_{-\infty < j <0}   \frac{1}{ \left|\left\{ \vec{v}\,:\, \widetilde{b}(\vec{v}) \le  \widetilde{b}(\vec{\gamma})  \right\}\right|^2} \scaleint{6ex}_{\bs \, 2^j (\widetilde{b}(\vec{\gamma}))^{-1}  }^{\, 2^{j+1}(\widetilde{b}(\vec{\gamma}))^{-1}} \left\vert \hat{\theta}\left( \frac{\vec{\gamma}}{\vec{\mu}}\right) \right\vert \, d\tau\\
 &\,+ \sum_{0\le j <\infty}   \frac{2^{2n(j+1)}}{ \left|\left\{ \vec{v}\,:\, \widetilde{b}(\vec{v}) \le  \widetilde{b}(\vec{\gamma})  \right\}\right|^2} \scaleint{6ex}_{\bs \, 2^j (\widetilde{b}(\vec{\gamma}))^{-1} }^{\, 2^{j+1}(\widetilde{b}(\vec{\gamma}))^{-1}} \left\vert \hat{\theta}\left( \frac{\vec{\gamma}}{\vec{\mu}}\right) \right\vert \, d\tau.\\ 
\end{split}\end{equation*}

\noindent For the first sum it suffices to bound $|\hat{\theta}|$ by a universal constant. It follows that

\begin{equation*}\begin{split}
& \sum_{-\infty < j <0}   \frac{1}{ \left|\left\{ \vec{v}\,:\, \widetilde{b}(\vec{v}) \le  \widetilde{b}(\vec{\gamma})  \right\}\right|^2} {\scaleint{6ex}_{\bs \, 2^j (\widetilde{b}(\vec{\gamma}))^{-1}}^{\, 2^{j+1}(\widetilde{b}(\vec{\gamma}))^{-1}} \left\vert \hat{\theta}\left( \frac{\vec{\gamma}}{\vec{\mu}}\right) \right\vert \, d\tau }\\
&\, \lesssim  \sum_{-\infty \le j <0}   \frac{1}{ \left|\left\{ \vec{v}\,:\, \widetilde{b}(\vec{v}) \le  \widetilde{b}(\vec{\gamma})  \right\}\right|^2} {\scaleint{6ex}_{\bs \, 2^j (\widetilde{b}(\vec{\gamma}))^{-1}}^{\, 2^{j+1}(\widetilde{b}(\vec{\gamma}))^{-1}}  \, d\tau}\\
  &= \, \sum_{-\infty \le j<0}   \frac{2^j}{\widetilde{b}(\vec{\gamma}) \left|\left\{ \vec{v}\,:\, \widetilde{b}(\vec{v}) \le  \widetilde{b}(\vec{\gamma})  \right\}\right|^2} \approx   \frac{1}{\widetilde{b}(\vec{\gamma}) \left|\left\{ \vec{v}\,:\, \widetilde{b}(\vec{v}) \le  \widetilde{b}(\vec{\gamma})  \right\}\right|^2}.\\
\end{split} \end{equation*}

\noindent We would like to obtain a similar bound for the second sum, with $j\ge 0.$ The main obstacle is obtaining decay in $j$ to counteract the growth of the term $2^{2n(j+1)},$ thus ensuring the convergence of the series. We will show that

 \begin{equation*}
  \scaleint{7ex}_{\bs \, 2^j (\widetilde{b}(\vec{\gamma}))^{-1} }^{\, 2^{j+1}(\widetilde{b}(\vec{\gamma}))^{-1}} \left\vert \hat{\theta}\left( \frac{\vec{\gamma}}{\vec{\mu}}\right) \right\vert \, d\tau \le \frac{C_k}{2^{Kj}\widetilde{b}(\vec{\gamma})},
 \end{equation*}
 
 \noindent for any positive constant $K,$ and a positive constant $C_k$ which depends only on $K,$ on the combined degree of $b,$ and on the dimension of the space.  
 
 \bigskip
 
  Since $\hat{\theta}$ is Schwartz,  
 $$  \left\vert \hat{\theta}\left( \frac{\vec{\gamma}}{\vec{\mu}}\right) \right\vert  \le \frac{D}{\left\vert C+ \left\vert p\left(\frac{\vec{\gamma}}{\vec{\mu}} \right)\right\vert \right\vert^N}$$
 \noindent  for any polynomial $p:\mathbb{R}^n\rightarrow\mathbb{R}$ and positive constants $C$ and $N.$ Here, the constant $D>0$ depends on $C,$ $N,$ and $p.$ In this interval, $2^j \le  \tau\widetilde{b}(\vec{\gamma})$ so it suffices to show that there exists some polynomial $p$ and constant $C$ such that $ \tau\widetilde{b}(\vec{\gamma}) \le C+  \left\vert p\left(\frac{\vec{\gamma}}{\vec{\mu}} \right)\right\vert.$ In fact, it would follow that for any $N>0,$

  $$  \left\vert \hat{\theta}\left( \frac{\vec{\gamma}}{\vec{\mu}}\right) \right\vert  \le \frac{D}{\left\vert  \tau\widetilde{b}(\vec{\gamma})    \right\vert^N} \le \frac{D}{2^{Nj} },$$
  
  \noindent so that 
  
  \begin{equation*}\begin{split}
  &\, \sum_{j=0}^{\infty}   \frac{2^{2n(j+1)}}{ \left|\left\{ \vec{v}\,:\, \widetilde{b}(\vec{v}) \le  \widetilde{b}(\vec{\gamma})  \right\}\right|^2} \scaleint{6ex}_{\bs \, 2^j (\widetilde{b}(\vec{\gamma}))^{-1} }^{\, 2^{j+1}(\widetilde{b}(\vec{\gamma}))^{-1}} \left\vert \hat{\theta}\left( \frac{\vec{\gamma}}{\vec{\mu}}\right) \right\vert \, d\tau   \le    \sum_{j=0}^{\infty}   \frac{D 2^j2^{2n(j+1)}}{2^{Nj} \widetilde{b}(\vec{\gamma}) \left|\left\{ \vec{v}\,:\, \widetilde{b}(\vec{v}) \le  \widetilde{b}(\vec{\gamma})  \right\}\right|^2}. 
  \end{split}\end{equation*}
  
  \noindent For sufficiently large $N,$ the series converges, and we would obtain the desired estimate. 
  
  \bigskip
  
  In order to find a polynomial $p$ and positive constant $C$ such that $ \tau\widetilde{b}(\vec{\gamma}) \le C+  \left\vert p\left(\frac{\vec{\gamma}}{\vec{\mu}} \right)\right\vert,$ let $\vec{s} = \frac{\vec{\gamma}}{\vec{\mu}}$ and write the above requirement as $ \tau\widetilde{b}(\vec{\mu}\vec{s}) \le C + |p\left(\vec{s} \right)|.$ By construction of the factors $\mu_1,\ldots,\mu_n$ the polynomial $\tau\widetilde{b}(\vec{\mu}\vec{s})$ satisfies all the hypothesis of Lemma \ref{lem:Schwartz2general m}. In particular, Claim \ref{cl:bound g sin coefs en 2 dim} on page \pageref{cl:bound g sin coefs en 2 dim} holds. Thus, there exists a constant $C,$ which depends only on the combined degree of $b$ and on the dimension of the space, such that

$$ \tau\widetilde{b}(\vec{\mu}\vec{s}) \le C(1 + s_1^{2m_1}+\ldots +s_n^{2m_n}).$$

\noindent This finishes the proof of Proposition \ref{thm: S original con y}.

\end{proof}

\subsubsection{The bound in terms of w}\label{tercer}

\begin{proposition}\label{thm: S original con t}
Let $(\vec{x},\vec{y},t)$ and $(\vec{x'},\vec{y'},t')$ be any two points in $\partial\Omega_b.$ Then, 

\begin{equation*}
\left\vert S\left((\vec{x},\vec{y},t);(\vec{x'},\vec{y'},t')\right)\right\vert \lesssim  \frac{1}{ |w| \,\,| \{ \vec{v} \,:\, \widetilde{b}(\vec{v}) <|w| \} |^2},
\end{equation*}

\noindent where the constant may depend on the combined degree of $b$ and the dimension of the space, but is independent of the two given points.

\end{proposition}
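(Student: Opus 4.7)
The plan is to parallel the proofs of the previous two bounds, but now treating $|w|$ as an oscillation scale rather than a decay scale. Starting from the rearranged expression (\ref{eq:con todo}) and using the uniform bound on $|\hat\theta_\tau|$ provided by Lemma \ref{lem:Schwartz2general m} (applicable because, as discussed preceding (\ref{eq:con todo}), $g(\vec v)=\tau\tilde b(\vec\mu\vec v)$ satisfies all four hypotheses of that lemma with constants depending only on the combined degree of $b$ and the dimension), the problem reduces to estimating
$$\left|\int_0^\infty H_0(\tau)\,e^{-2\pi\tau(\delta+iw)}\,d\tau\right|,\qquad H_0(\tau)=\frac{\hat\theta_\tau\bigl((\vec{y}-\vec{y'})/\vec\mu\bigr)}{\mu_1^2\cdots\mu_n^2}.$$
The required factor $1/|w|$ must come from the oscillation of the phase, since the factor $e^{-2\pi\tau\delta}$ has already been used only through $|e^{-2\pi\tau\delta}|\le 1$ in the non-oscillatory regime.

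To extract it, I will split the $\tau$-axis at the natural scale $\tau=1/|w|$. On the small-$\tau$ piece $(0,1/|w|]$ I bound $|e^{-2\pi\tau(\delta+iw)}|\le 1$ and decompose dyadically $\tau\in[2^j/|w|,2^{j+1}/|w|]$ for $j<0$; using $\mu_1\cdots\mu_n\approx|\{\vec v:\tilde b(\vec v)\le 1/\tau\}|$ from (\ref{eq:eleccion mus}) together with the monotonicity of the sublevel sets, each dyadic interval contributes at most $2^j/\bigl(|w|\,|\{\vec v:\tilde b(\vec v)\le|w|\}|^2\bigr)$, and the geometric series $\sum_{j<0}2^j$ sums to give the desired bound. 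On the large-$\tau$ piece $[1/|w|,\infty)$ I integrate by parts in $\tau$ a total of $N$ times (with $N=2n+2$ sufficing), against $e^{-2\pi\tau(\delta+iw)}$; since $|\delta+iw|\ge|w|$ this pulls out a factor $1/|w|^N$. The boundary contributions vanish at $\tau=\infty$ thanks to $e^{-2\pi\tau\delta}$ with $\delta>0$, while the boundary contribution at $\tau=1/|w|$ is directly of the correct order $|H_0(1/|w|)|/|w|\lesssim 1/(|w|\,|\{\vec v:\tilde b(\vec v)\le|w|\}|^2)$. The remaining integral $|w|^{-N}\int_{1/|w|}^\infty |H_0^{(N)}(\tau)|\,e^{-2\pi\tau\delta}\,d\tau$ is then handled by dyadic decomposition together with the Bishop–Gromov-type volume inequality of Claim \ref{cl:B-M}, in precisely the same fashion as in Propositions \ref{thm: S original 1} and \ref{thm: S original con y}; the resulting sum $\sum_{j\ge 0}2^{2n(j+1)}/2^{j(N-1)}$ converges precisely because $N-1>2n$.

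The main obstacle is obtaining a uniform scale-invariant bound of the form $|H_0^{(k)}(\tau)|\lesssim|H_0(\tau)|/\tau^k$ for $k\le N$, with constants depending only on the combined degree of $b$ and the dimension. Differentiating $H_0$ is delicate because $\tau$ enters in two intertwined places: through the John axes $\mu_i(\tau)$ of $\{\vec v:\tilde b(\vec v)\le 1/\tau\}$, and through the polynomial $\tau\tilde b(\vec\mu\vec v)$ appearing inside $\hat\theta_\tau$. The plan is to differentiate $H_0$ under the integral sign defining $\theta_\tau$ and re-run the derivative arguments of Section \ref{denominador} — in particular Claim \ref{cl:bound de las derivadas} — absorbing the extra $\vec v$-insertions produced by $\partial_\tau^k[\tau\tilde b(\vec\mu(\tau)\vec v)]$ into the Schwartz decay already established in Lemma \ref{lem:Schwartz2general m}. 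The anisotropic scaling $\mu_i(\lambda\tau)=\lambda^{-1/(2m_i)}\mu_i(\tau)$ — forced by the combined degree of $\tilde b$ — then produces the correct $\tau^{-k}$ factor after accounting for all the $\tau$-dependencies. Once this uniform derivative bound is in hand, the two pieces above combine to yield Proposition \ref{thm: S original con t}.
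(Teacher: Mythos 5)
Your overall architecture — split $\tau$ at scale $1/|w|$, bound the small-$\tau$ piece dyadically, and exploit oscillation on the large-$\tau$ piece by a repeated integration-by-parts argument — broadly matches the strategy of the paper. But the step you identify yourself as "the main obstacle" is where the proposal breaks down, and the fix you sketch does not work.

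You propose to differentiate $H_0(\tau)=\hat\theta_\tau\bigl((\vec y-\vec y')/\vec\mu\bigr)/(\mu_1^2\cdots\mu_n^2)$ directly in $\tau$, and to control the resulting derivatives via a scaling law $\mu_i(\lambda\tau)=\lambda^{-1/(2m_i)}\mu_i(\tau)$. No such scaling law holds. The $\mu_i(\tau)$ are the semi-axes of the John ellipsoid of (a symmetrization of) $\{\vec v:\tilde b(\vec v)\le 1/\tau\}$. That scaling identity would require $\tilde b$ to be exactly quasi-homogeneous, i.e.\ of the form $\sum c_i v_i^{2m_i}$; a general polynomial of combined degree has cross terms and lower-order terms, so the sublevel sets at different heights are not anisotropic dilates of one another. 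Worse, even the principal \emph{directions} of the John ellipsoid rotate as $\tau$ varies, and the functions $\tau\mapsto\mu_i(\tau)$ need not be differentiable (degenerate axes produce nonsmooth behavior, just as eigenvalues of a matrix family cross nonsmoothly). The paper relies on no regularity whatsoever of $\tau\mapsto\mu_i(\tau)$ and never differentiates through $\vec\mu$.

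The paper's actual argument resolves exactly this difficulty by reversing the order of operations. It first computes the $\tau$-derivatives of
$F(\tau)=e^{-2\pi\tau\delta}\int_{\mathbb R^n} e^{2\pi i\vec\eta\cdot(\vec y-\vec y')}\bigl[\int e^{4\pi[\vec\eta\cdot\vec v-\tau\tilde b(\vec v)]}\,d\vec v\bigr]^{-1}d\vec\eta$
\emph{before} any $\vec\mu$-normalization (Claim \ref{cl:deriv de F}); each derivative brings down a factor of $\tilde b(\vec v)$ inside the inner integral, and the structure of the terms is such that one can factor out exactly $\tau^{-N}$ by writing $\tilde b(\vec v)^s=\tau^{-s}(\tau\tilde b(\vec v))^s$. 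Only \emph{after} differentiating does the paper perform the change of variables $\vec\eta\to\vec\eta/\vec\mu$, $\vec v\to\vec\mu\vec v$ (treating $\vec\mu$ as a fixed Jacobian for each frozen $\tau$), and then invokes Claim \ref{bound de Delta} to show the residual $\Delta^{\vec\mu}_{N+1,k}(\tau)$ is uniformly bounded. Since $\vec\mu$ never gets differentiated, the question of its smoothness never arises. (Relatedly, the paper replaces ordinary integration by parts with the half-period shift trick of Claim \ref{cl:truco de wiener}, which avoids a tower of boundary terms; that is a cosmetic rather than essential difference from your plan.) To repair your proposal you would need to adopt this "differentiate first, then normalize" order; as written, the derivative bound $|H_0^{(k)}(\tau)|\lesssim|H_0(\tau)|/\tau^k$ is unsupported and the argument for it is based on a false identity.
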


The derivation of this last bound is rather long and technical. Before giving all the technical details, however, we shall begin by briefly outlining the main ideas behind the proof. It follows from equation (\ref{eq:con im}) on page \pageref{eq:con im} that

 \begin{equation}\label{eq:original en t}
S = \int_0^{\frac{\pi}{|u|}}e^{- iu\tau}  F(\tau)\, d\tau + \int_{\frac{\pi}{|u|}}^{\infty}e^{- iu\tau}  F(\tau)\, d\tau.
\end{equation}

\noindent where for convenience we have set $u =2\pi w,$ and 

\begin{equation}
F(\tau) =   e^{-2\pi\tau\delta} {\scaleint{7ex}_{\bs \mathbb R^n}} \frac{e^{2\pi i\vec{\eta}\cdot (\vec{y}-\vec{y'}) }}{\displaystyle\int \limits_{\mathbb R^n} e^{4\pi [\vec{\eta}\cdot \vec{v} -\tau\tilde{b}(\vec{v})]} \, d\vec{v}} \, d\vec{\eta}. 
\end{equation}

\noindent The integral for $0\le \tau \le {\frac{\pi}{|u|}}$ in equation (\ref{eq:original en t}) yields the desired estimate by using similar techniques as those detailed in the proof of the previous two bounds. Thus, the main difficulty lies in estimating the integral for ${\frac{\pi}{|u|}}\le \tau <\infty.$ In particular, we must show that the integral converges. To do so, we will take advantage of the oscillation of the term $e^{- iu\tau}.$ Integrating the latter by parts $N$ times, for an arbitrary positive integer $N,$ we obtain formally an equation of the form 

$$ \scaleint{7ex}_{\bs \frac{\pi}{|u|}}^{\infty}\frac{e^{- iu\tau}}{|u|^N}  F^{(N)}(\tau)\, d\tau. $$

\noindent We then show that after introducing the factors $\vec{\mu}$ as in the two previous bounds, every derivative of $F(\tau)$ yields a factor of $\frac{1}{\tau}$ times a bounded function, so that 

\begin{equation*}\begin{split}
\left\vert \scaleint{6ex}_{\bs \frac{\pi}{|u|}}^{\infty}\frac{e^{- iu\tau}}{|u|^N}  F^{(N)}(\tau)\, d\tau\right\vert & \, \approx \frac{1}{|u|^N}\scaleint{6ex}_{\bs \frac{\pi}{|u|}}^{\infty}\frac{1}{\mu_1^2\cdots\mu_n^2}\cdot \frac{1}{\tau^N}\,d\tau \approx \frac{1}{|u|^N}\scaleint{6ex}_{\bs \frac{\pi}{|u|}}^{\infty}\frac{1}{\left|\left\{ \vec{v}\,:\, \widetilde{b}(\vec{v}) \le \frac{1}{\tau} \right\}\right|^2}\cdot \frac{1}{\tau^N}\,d\tau.\\
\end{split}\end{equation*}

\noindent Finally, using Claim \ref{cl:B-M} (see Appendix), we show that this last integral is bounded by an expression of the form

$$\frac{1}{|u|^N \left|\left\{ \vec{v}\,:\, \widetilde{b}(\vec{v}) \le |u| \right\}\right|^2 }\scaleint{6ex}_{\bs \frac{\pi}{|u|}}^{\infty}\frac{|u|^{2n}\tau^{2n}}{\tau^N}\,d\tau,$$

\noindent yielding the desired estimate for large enough values of $N.$  

\bigskip

Before presenting a rigorous proof of Proposition \ref{thm: S original con t}, we discuss three technical results that will be used in the course of the proof. In Claim \ref{cl:truco de wiener} we obtain an upper bound for $\int_0^{\infty} e^{-iu\tau}F(\tau)\,d\tau$ in terms of the $(N+1)^{st}$ derivative of $F.$ The method we use is analogous to integration by parts, but does not yield boundary terms, making the computation slightly simpler (see, e.g., Proposition  $X_{19}$ on p. 14 \cite{Wi58}). In Claim \ref{cl:deriv de F} we compute the $N^{th}$ derivative of $F.$ In Claim \ref{bound de Delta} we show that, after introducing the factors $\vec{\mu},$ the $N^{th}$ derivative of $F$ is dominated by $\frac{1}{\tau^N}$ times a bounded function. 

\begin{claim}\label{cl:truco de wiener}
 Let $t\in\mathbb{R}$ and 
 
 $$I(t) = \int_0^{\infty} e^{-it\tau}F(\tau)\,d\tau,$$
 
 \noindent where $F \in C^{\infty}(\mathbb{R})$ and $F\in L^1(\mathbb{R}).$ Then given $N \in \mathbb{N}$ there exist positive coefficients $c_1, \ldots,c_{N+1},$ such that
 
 \begin{equation}\label{eq:truco de Wiener}\begin{split}
 |I(t)| \le &\, \sum_{j=0}^{N+1} c_j \left\vert \int_0^{\frac{\pi}{|t|}} e^{-it\tau} F\left( \tau +\frac{j\pi}{|t|}\right) \, d\tau \right\vert \\ &+ \frac{1}{2^{N+1}}\left\vert \int_\frac{\pi}{|t|}^{\infty}e^{-it\tau} \int_0^{\frac{\pi}{|t|}}\cdots\int_0^{\frac{\pi}{|t|}} F^{(N+1)}(\tau+s_1+\ldots + s_{N+1}) \,ds_1\cdots ds_{N+1} \,d\tau \right\vert. \\
 \end{split}\end{equation}

\end{claim}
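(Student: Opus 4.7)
The proof proposal rests on one simple observation that makes the argument possible: since $e^{-it\pi/|t|} = e^{-i\pi\,\sgn(t)} = -1$, shifting the variable of integration by $\pi/|t|$ is equivalent to flipping the sign of the oscillatory factor $e^{-it\tau}$. This turns the shift into an ``integration by parts without boundary terms'' scheme, in the spirit of Wiener \cite{Wi58}.

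The plan is to split $I(t) = B_0(t) + J_0(t)$ where
\begin{equation*}
B_0(t) = \int_0^{\pi/|t|} e^{-it\tau} F(\tau)\,d\tau, \qquad J_0(t) = \int_{\pi/|t|}^{\infty} e^{-it\tau} F(\tau)\,d\tau,
\end{equation*}
and then process $J_0$ repeatedly. Changing variables $\tau \mapsto \tau + \pi/|t|$ in $J_0$ (and using $e^{-it\pi/|t|}=-1$) gives the alternative expression $J_0(t) = -\int_0^{\infty} e^{-it\tau} F(\tau + \pi/|t|)\,d\tau$. Averaging these two formulas for $J_0$ and writing the difference $F(\tau) - F(\tau + \pi/|t|) = -\int_0^{\pi/|t|} F'(\tau + s_1)\,ds_1$ via the fundamental theorem of calculus yields
\begin{equation*}
J_0(t) = -\tfrac{1}{2}\int_0^{\pi/|t|} e^{-it\tau} F(\tau + \pi/|t|)\,d\tau \; -\; \tfrac{1}{2}\int_{\pi/|t|}^{\infty} e^{-it\tau} \int_0^{\pi/|t|} F'(\tau + s_1)\,ds_1\,d\tau.
\end{equation*}
The remainder on the right has exactly the form of $J_0$ with $F$ replaced by the function $\tau \mapsto \int_0^{\pi/|t|} F'(\tau+s_1)\,ds_1$, so the same trick can be applied again.

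Iterating the procedure $N+1$ times produces a sum of $N+1$ boundary terms plus a remainder term of the form $\tfrac{(-1)^{N+1}}{2^{N+1}}$ times the nested multi-integral of $F^{(N+1)}$ appearing in the claim. By induction on the number of iterations, each boundary term after $k$ steps is a constant multiple of $\int_0^{\pi/|t|} e^{-it\tau} G_k(\tau + \pi/|t|)\,d\tau$, where $G_k(\tau) = \int_0^{\pi/|t|} \cdots \int_0^{\pi/|t|} F^{(k)}(\tau + s_1 + \cdots + s_k)\,ds_1\cdots ds_k$. Applying the fundamental theorem of calculus $k$ times telescopes $G_k$ into a signed sum $\sum_{j=0}^{k}(-1)^{k-j}\binom{k}{j} F(\tau + j\pi/|t|)$, so collecting terms and taking absolute values produces a linear combination of the quantities $\bigl|\int_0^{\pi/|t|} e^{-it\tau} F(\tau + j\pi/|t|)\,d\tau\bigr|$ for $0 \le j \le N+1$ with some positive coefficients $c_j$, exactly as stated.

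The only real obstacle is bookkeeping: one must verify that the sign flips from the shifts and from the FTC combine consistently so that all boundary terms can be grouped after rearrangement into the form $\sum_{j=0}^{N+1} c_j |\cdots|$ with positive $c_j$. Since the statement only requires existence of such constants, this can be carried out cleanly by induction without needing explicit combinatorial formulas; the hypothesis $F \in C^\infty \cap L^1$ guarantees that every shift, FTC expansion, and Fubini interchange used along the way is justified.
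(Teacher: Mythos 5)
Your approach is essentially identical to the paper's: split the integral at $\pi/|t|$, exploit $e^{-it\pi/|t|}=-1$ to convert the tail into an iterated finite-difference of $F$ (gaining a factor of $\tfrac12$ at each of $N+1$ steps), identify the $(N+1)$-fold difference with the nested integral of $F^{(N+1)}$, and expand the intermediate differences binomially to collect the boundary terms with positive coefficients. The only blemish is a harmless off-by-one in your notation (the boundary term produced at step $k$ involves the $(k-1)$-fold difference $G_{k-1}$, not $G_k$), which does not affect the argument.
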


\begin{proof}
We can write   

\begin{equation}\label{eq:S y L}
I(t)  = \int_0^{\frac{\pi}{|t|}} e^{-it\tau} F(\tau) \, d\tau + \int_{\frac{\pi}{|t|}}^{\infty} e^{-it\tau}F(\tau)\,d\tau = S+L.
\end{equation}

\noindent Introducing a factor of $e^{i\,\sgn{(t)}\pi},$ we can split $L$ as follows: 

\begin{equation*}\begin{split}
L &\, = \frac{1}{2} \left[    \int_{\frac{\pi}{|t|}}^{\infty} e^{-it\tau}F(\tau)\,d\tau    -  \int_{\frac{\pi}{|t|}}^{\infty} e^{i\, \sgn{(t)}\pi}e^{-it\tau}F(\tau)\,d\tau    \right]\\
&\, = \frac{1}{2} \left[    \int_{\frac{\pi}{|t|}}^{\infty} e^{-it\tau}F(\tau)\,d\tau    -  \int_{\frac{\pi}{|t|}}^{\infty} e^{-it\left(\tau-\frac{\pi}{|t|}\right)}F(\tau)\,d\tau  \right]\\
&\, = \frac{1}{2} \left[    \int_{\frac{\pi}{|t|}}^{\infty} e^{-it\tau}F(\tau)\,d\tau    -  \int_{0}^{\infty} e^{-it\tau}F\left(\tau+\frac{\pi}{|t|}\right)\,d\tau  \right].\\
\end{split}\end{equation*}

Writing $F\left( \tau+ \frac{\pi}{|t|} \right)  =  \left[F\left( \tau+ \frac{\pi}{|t|} \right) - F(\tau)\right]+F(\tau),$ we have that 

\begin{equation*}\begin{split}
L &\,=   \frac{1}{2} \left(   - \int_0^{\frac{\pi}{|t|}}e^{-it\tau}F(\tau)\,d\tau    -  \int_{0}^{\infty} e^{-it\tau}\left[ F\left(\tau+\frac{\pi}{|t|}\right)- F(\tau)\right]\,d\tau \right).\\
\end{split}\end{equation*}

\noindent Using this last expression in equation (\ref{eq:S y L}), it follows that 

\begin{equation*}
I(t) = \frac{1}{2} \left( \int_0^{\frac{\pi}{|t|}} e^{-it\tau} F(\tau) \, d\tau - \int_{0}^{\infty} e^{-it\tau}\left[ F\left(\tau+\frac{\pi}{|t|}\right)- F(\tau)\right]\,d\tau \right).
\end{equation*}

\noindent Now let $F_1(\tau) = F\left(\tau+\frac{\pi}{|t|}\right) - F(\tau)$ and let $I_1(t) = \int_{0}^{\infty} e^{-it\tau} F_1(\tau)\,d\tau.$ Then, by the same argument, it follows that 

\begin{equation*}
I_1(t) = \frac{1}{2} \left( \int_0^{\frac{\pi}{|t|}} e^{-it\tau} F_1(\tau) \, d\tau -  \int_{0}^{\infty} e^{-it\tau}\left[ F_1\left(\tau+\frac{\pi}{|t|}\right)- F_1(\tau)\right]\,d\tau \right).
\end{equation*}

\noindent After $N$ times of repeating this process, we have that

\begin{equation*}\begin{split}
I_N(t) = & \,\int_{0}^{\infty} e^{-it\tau} F_N(\tau)\,d\tau \\ =&\, \frac{1}{2} \left( \int_0^{\frac{\pi}{|t|}} e^{-it\tau} F_N(\tau) \, d\tau -  \int_{0}^{\infty} e^{-it\tau}\left[ F_N\left(\tau+\frac{\pi}{|t|}\right)- F_N(\tau)\right]\,d\tau \right),\\
\end{split}\end{equation*}

\noindent where $F_N(\tau) = F_{N-1}\left(\tau+\frac{\pi}{|t|}\right) - F_{N-1}(\tau).$ 

\bigskip 

Letting 

\begin{equation*}
 S_j(t) = \int_0^{\frac{\pi}{|t|}} e^{-it\tau}F_j(\tau)\,d\tau
\end{equation*}

\noindent for $1 \le j \le N-1,$ it follows that

\begin{equation*}
 I(t)  =  \frac{1}{2} \left[ S(t) - \frac{1}{2} \left[ S_1(t) - \frac{1}{2} \left[ S_2(t) \cdots -\frac{1}{2}\left[ S_{N-1}(t) - I_N(t) \right] \right] \right]\right]. 
\end{equation*}

\noindent That is, 

\begin{equation*}\begin{split}
I(t)  =&\, \frac{1}{2}S(t) + \sum_{k=1}^{N} \frac{(-1)^{k}}{2^{k+1}} \int_0^{\frac{\pi}{|t|}} e^{-it\tau} F_{k}(\tau)\, d\tau  +\frac{(-1)^{N+1}}{2^{N+1}}    \int_{0}^{\infty} e^{-it\tau} F_{N+1}(\tau)\,d\tau.\\
\end{split}\end{equation*}

\noindent Notice that after expanding and rearranging terms, we can write for $1\le k\le N$

\begin{equation*}
F_k(\tau) = \sum_{j=0}^{k-1}(-1)^{k+j+1}{k-1 \choose j} \left[F\left( \tau + \frac{(j+1)\pi}{|t|}\right) - F\left( \tau + \frac{j\pi}{|t|}\right) \right].
\end{equation*}

\noindent It follows that 

\begin{equation*}\begin{split}
I(t) =&\, \frac{1}{2}S(t)+ \sum_{k=1}^{N} \sum_{j=0}^{k-1}{k-1 \choose j} \frac{(-1)^{j+1}}{2^{k+1}} \int_0^{\frac{\pi}{|t|}} e^{-it\tau} \left[F\left( \tau + \frac{(j+1)\pi}{|t|}\right) - F\left( \tau + \frac{{j}\pi}{|t|}\right) \right] \, d\tau\\ 
+&\, \sum_{j=0}^N\frac{(-1)^{j+1}}{2^{N+1}}{N\choose j} \int_0^{\frac{\pi}{|t|}}e^{-it\tau}\left[ F\left(\tau +\frac{(j+1)\pi}{|t|} \right) - F\left(\tau +\frac{j\pi}{|t|}  \right) \right]\,d\tau\\
+&\,\frac{(-1)^{N+1}}{2^{N+1}}    \int_{\frac{\pi}{|t|}}^{\infty} e^{-it\tau} F_{N+1}(\tau)\,d\tau.\\
\end{split}\end{equation*}

\noindent Changing the order of summation, we get

\begin{equation*}\begin{split}
I(t) =&\,  \frac{1}{2}S(t)+ \sum_{j=1}^{N} \sum_{k=j}^{N}{k-1 \choose j-1} \frac{(-1)^{j}}{2^{k+1}} \int_0^{\frac{\pi}{|t|}} e^{-it\tau} F\left( \tau + \frac{j\pi}{|t|}\right)  \, d\tau\\
+&\,  \sum_{j=0}^{N-1} \sum_{k=j+1}^{N}{k-1 \choose j} \frac{(-1)^{j}}{2^{k+1}} \int_0^{\frac{\pi}{|t|}} e^{-it\tau}  F\left( \tau + \frac{{j}\pi}{t}\right)\, d\tau\\
+& \, \sum_{j=1}^{N+1}\frac{(-1)^{j}}{2^{N+1}}{N\choose j-1} \int_0^{\frac{\pi}{|t|}}e^{-it\tau} F\left(\tau +\frac{j\pi}{|t|} \right) \,d\tau        \\
+&  \, \sum_{j=0}^N\frac{(-1)^{j}}{2^{N+1}}{N\choose j} \int_0^{\frac{\pi}{|t|}}e^{-it\tau} F\left(\tau +\frac{j\pi}{|t|} \right) \,d\tau+\frac{(-1)^{N+1}}{2^{N+1}}    \int_{\frac{\pi}{|t|}}^{\infty} e^{-it\tau} F_{N+1}(\tau)\,d\tau. \\
\end{split}\end{equation*}

\noindent Letting

\begin{equation*}\begin{split}
& c_0= \frac{1}{2} + \sum_{k=1}^{N}\frac{1}{2^{k+1}}+\frac{1}{2^{N+1}};\\
&c_j = \sum_{k=j}^{N}{k-1 \choose j-1} \frac{1}{2^{k+1}} + \sum_{k=j+1}^{N}{k-1\choose j}\frac{1}{2^{k+1}}+{N\choose j-1}\frac{1}{2^{N+1}} +{N\choose j}\frac{1}{2^{N+1}}\,\,\,{\text {for}}\,\,\, 1\le J\le N-1;\\
& c_{N} = \frac{N+2}{2^{N+1}};\,\,\,{\text{and}}\\
& c_{N+1} = \frac{1}{2^{N+1}},\\
\end{split}\end{equation*}

\noindent it follows that

 \begin{equation}\label{eq:antes de F}
 |I(t)| \le \sum_{j=0}^{N+1} c_j \left\vert \int_0^{\frac{\pi}{|t|}} e^{-it\tau} F\left( \tau +\frac{j\pi}{|t|}\right) \, d\tau \right\vert + \frac{1}{2^{N+1}} \left\vert  \int_{0}^{\infty} e^{-it\tau} F_{N+1}(\tau)\,d\tau \right\vert. 
\end{equation}

\noindent It is worth noting that the exact form of the coefficients $c_k$ for $1\le k\le N$ is irrelevant. The only fact that will be needed is that they exist and are positive. 

\bigskip

It suffices now to show that 

\begin{equation*}
F_{N+1}(\tau) = \int_0^{\frac{\pi}{|t|}}\cdots\int_0^{\frac{\pi}{|t|}} F^{(N+1)}(\tau+s_1+\ldots + s_{N+1}) \,ds_1\cdots ds_{N+1},
\end{equation*}

\noindent where

\begin{equation}\label{eq:F}\begin{split}
 F_{N+1}(\tau) =&\, \sum_{j=0}^{N}(-1)^{N+j}{N \choose j} \left[F\left( \tau + \frac{(j+1)\pi}{|t|}\right) - F\left( \tau + \frac{j\pi}{|t|}\right) \right] \\
=&\, \sum_{j=0}^{N}(-1)^{N+j}{N \choose j} \int_0^{\frac{\pi}{|t|}} F'\left(\tau + s +  \frac{j\pi}{|t|}\right)\,ds.\\
\end{split}\end{equation}

\noindent Using the identity 

\begin{equation*}
 {n \choose k} = {n-1\choose k} + {n-1\choose k-1},
\end{equation*}

\noindent it follows that for any integer $M>0$ and for any function $h,$

\begin{equation}\label{eq:choose}\begin{split}
&\sum_{j=0}^M (-1)^{M+j}{M\choose j}h(j) = \sum_{j=0}^{M-1}(-1)^{M+j+1}{M-1\choose j}\left[ h(j+1)-h(j)\right].\\
\end{split}
\end{equation}

\noindent Let $h(j) =  \int_0^{\frac{\pi}{|t|}} F'\left(\tau + s +  \frac{j\pi}{|t|}\right)\,ds$ and $M=N.$ Notice that we can write

\begin{equation}\label{eq:dif}\begin{split}
h(j+1) - h(j) =&\, \int_0^{\frac{\pi}{|t|}}\int_0^{\frac{\pi}{|t|}}F''\left(\tau + s_1+s_2 +  \frac{j\pi}{|t|}\right)\,ds_1\,ds_2. \\
\end{split}\end{equation}

\noindent It follows from equations (\ref{eq:F}), (\ref{eq:choose}) and (\ref{eq:dif}) that

\begin{equation*}\begin{split}
F_{N+1}(\tau) =  \sum_{j=0}^{N-1}(-1)^{N+j+1}{N-1\choose j} \int_0^{\frac{\pi}{|t|}}\int_0^{\frac{\pi}{|t|}}F''\left(\tau + s_1+s_2 +  \frac{j\pi}{|t|}\right)\,ds_1\,ds_2. \\
\end{split}\end{equation*}

\noindent Repeating this process $N-1$ times, where the $i^{th}$ time  we choose
$$h(j) =  \int_0^{\frac{\pi}{|t|}}\int_0^{\frac{\pi}{|t|}} F^{(i+1)}\left(\tau + s_1+\ldots +s_{i+1} +  \frac{j\pi}{|t|}\right)\,ds_1\ldots\,ds_{i+1}$$ 
\noindent and $M=N-i,$   we obtain 

\begin{equation*}\begin{split}
F_{N+1}(\tau) = \int_0^{\frac{\pi}{|t|}}\cdots\int_0^{\frac{\pi}{|t|}}F^{(N+1)}\left(\tau + s_1+\ldots+ s_{N+1}  \right)\,ds_1\,\cdots\,ds_{N+1}.\\
\end{split}\end{equation*}

\noindent This finishes the proof of Claim \ref{cl:truco de wiener}.

\end{proof}

\bigskip

\begin{claim}\label{cl:deriv de F}

The $N^{th}$ derivative of 

\begin{equation}\label{eq:FFF}
F(\tau) =  e^{-2\pi\tau\delta} {\scaleint{7ex}_{\bs \mathbb R^n}} \frac{e^{2\pi i\vec{\eta}\cdot (\vec{y}-\vec{y'}) }}{\displaystyle\int_{\mathbb R^n} e^{4\pi [\vec{\eta}\cdot \vec{v} -\tau\tilde{b}(\vec{v})]} \, d\vec{v}} \, d\vec{\eta}
\end{equation}

\noindent consists of sums of terms of the form 

\begin{equation*}
\frac{C(\tau\delta)^{N-k}e^{-2\pi\tau\delta}}{\tau^N} {\scaleint{7ex}_{\bs \mathbb R^n}} \frac{e^{2\pi i\vec{\eta}\cdot (\vec{y}-\vec{y'}) }f_1(\tau,\vec{\eta}) \cdots f_k(\tau,\vec{\eta})}{\gamma(\tau,\vec{\eta})} \, d\vec{\eta},
\end{equation*}

\noindent where  

$$f_s(\tau,\vec{\eta}) = \left[\,\,\displaystyle\int \limits_{\mathbb R^n}(\tau\widetilde{b}(\vec{v}))^{s} e^{4\pi [\vec{\eta}\cdot \vec{v} -\tau\tilde{b}(\vec{v})]} \, d\vec{v}\right]^{a_s};$$

\begin{equation}\label{eq:ga}
 \gamma(\tau,\vec{\eta}) = \left[\,\,\displaystyle\int \limits_{\mathbb R^n} e^{4\pi [\vec{\eta}\cdot \vec{v} -\tau\tilde{b}(\vec{v})]} \, d\vec{v}\right]^d;
 \end{equation}

\noindent  $a_1,\ldots,a_k,k, d \in \mathbb{N};$ $0\le k\le N;$ $a_1 + \ldots + a_k = d-1;$ and $a_1 + 2a_2 +\ldots + ka_k = k.$ 

\end{claim}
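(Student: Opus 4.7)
The plan is to prove the claim by induction on $N$. The base case $N=0$ is immediate: $F(\tau)$ itself matches the asserted form with $k=0$ (so the product $f_1\cdots f_k$ is empty), $d=1$, $N-k=0$, and both conditions $\sum a_s = d-1$ and $\sum s\,a_s = k$ reduce to $0=0$. For the inductive step, I would differentiate a generic term of the asserted form with respect to $\tau$ and verify that each summand in the derivative again has the same form for $N+1$.

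Concretely, writing the factor $(\tau\delta)^{N-k}/\tau^N$ as $\delta^{N-k}\tau^{-k}$, the four sources of $\tau$-dependence are $e^{-2\pi\tau\delta}$, the power $\tau^{-k}$, each $f_s = F_s^{a_s}$ where $F_s(\tau,\vec{\eta})=\int(\tau\tilde b)^s e^{4\pi[\vec\eta\cdot\vec v-\tau\tilde b]}d\vec v$, and the denominator $G^{-d}$ where $G(\tau,\vec{\eta})=\int e^{4\pi[\vec\eta\cdot\vec v-\tau\tilde b]}d\vec v$. A direct computation gives $\partial_\tau F_s = \tfrac{s}{\tau}F_s - \tfrac{4\pi}{\tau}F_{s+1}$ and $\partial_\tau G^{-d} = \tfrac{4\pi d}{\tau}F_1 G^{-(d+1)}$. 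So the derivative of a generic term splits into four contributions:
\begin{enumerate}[(i)]
\item from $\partial_\tau e^{-2\pi\tau\delta}$, a factor $-2\pi\delta$ (which raises the $(\tau\delta)$ exponent and $N$ by one, leaving $k$, $a_s$, $d$ unchanged);
\item from $\partial_\tau \tau^{-k}$, the contribution $-\tfrac{k}{\tau}$ times the original integrand;
\item from the $\tfrac{s a_s}{\tau}F_s$ part of $\partial_\tau F_s^{a_s}$, summed over $s$, the total factor $\tfrac{1}{\tau}\sum_s s\,a_s$ times the original integrand;
\item from the $-\tfrac{4\pi a_s}{\tau}F_s^{a_s-1}F_{s+1}$ parts and from $\partial_\tau G^{-d}$, integrands with either $a_s\to a_s-1,\ a_{s+1}\to a_{s+1}+1$ (same $d$) or $a_1\to a_1+1,\ d\to d+1$.
\end{enumerate}

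The decisive observation is that the inductive hypothesis $\sum_s s\,a_s = k$ forces contributions (ii) and (iii) to cancel exactly, since $-\tfrac{k}{\tau}+\tfrac{1}{\tau}\sum_s s\,a_s = 0$. What remains is (i) together with the terms in (iv). For (i) the pair $(k,d,\{a_s\})$ is unchanged while $N\to N+1$, and the conditions $\sum a_s = d-1$ and $\sum s\,a_s = k$ persist. For the terms in (iv) one checks directly that the new parameters satisfy the conditions for $N+1$ with $k'=k+1$: in the first sub-case $d'=d$ and $\sum_s a'_s = \sum_s a_s = d-1 = d'-1$, while $\sum_s s\,a'_s = k + 1 = k'$; in the second sub-case $d' = d+1$, $\sum_s a'_s = (d-1)+1 = d'-1$, and $\sum_s s\,a'_s = k + 1 = k'$. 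The range $0\le k'\le N+1$ is evidently preserved. Up to absorbing numerical factors into $C$ and rewriting $\delta^{N-k}\tau^{-k-1}=(\tau\delta)^{N-k}/\tau^{N+1} = (\tau\delta)^{(N+1)-k'+1}/\tau^{N+1}$ or $(\tau\delta)^{(N+1)-k'}/\tau^{N+1}$ as appropriate, every resulting summand has the form claimed.

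The main bookkeeping hurdle — and the only step that is not purely mechanical — is tracking the combinatorial identity $\sum s\,a_s = k$ through the differentiation so that the cancellation in (ii)$+$(iii) occurs; once this is noted, the rest is routine verification that the remaining summands fit the template. I would present this cancellation explicitly in the write-up, since without it one would spuriously produce terms (like the $F_1/(\tau^{N+1}G^d)$ term that appears both from $\partial_\tau\tau^{-k}$ and from the low-order part of $\partial_\tau F_1$) that do not match the asserted form.
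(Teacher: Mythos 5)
Your proof is correct, and the route is genuinely different from the paper's. The paper splits the work into three layers: it first inducts on the $k$-th derivative of the inner integral $J(\tau)$ alone, writing the factors with plain $\tilde b^s$ (no $\tau$-normalization), then applies Leibniz to $F = e^{-2\pi\tau\delta}J$, and only at the very end pulls the powers $\tau^{sa_s}$ out of each factor, using $\sum_s s\,a_s = k$ to identify the total power $\tau^{-k}$. You instead run a single induction directly on the normalized form of $F^{(N)}$, carrying the factor $\delta^{N-k}\tau^{-k}$ along. The price is that differentiating $\tau^{-k}$ and the $(\tau\tilde b)^s$-factors each produce a term proportional to $\tau^{-k-1}$ times the original integrand, and you must observe that these cancel precisely because $\sum_s s\,a_s = k$; the payoff is that the Leibniz step and the final renormalization disappear. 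In both arguments the same combinatorial identity $\sum_s s\,a_s = k$ is doing the essential work, just in different places — as a post-hoc rebalancing of $\tau$-powers in the paper, versus as an in-flight cancellation in yours. Your bookkeeping of the parameter updates $(a_s,a_{s+1},d)\mapsto(a_s-1,a_{s+1}+1,d)$ and $(a_1,d)\mapsto(a_1+1,d+1)$, and the verification that $\sum a'_s = d'-1$ and $\sum s\,a'_s = k'$ persist, matches what the paper checks in its own induction. One tiny slip: near the end you write $(\tau\delta)^{(N+1)-k'+1}/\tau^{N+1}$ for contribution (i); with $k'=k$ the correct exponent is simply $(N+1)-k'$, the same form as in case (iv). This is only a typographical error and does not affect the argument.
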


\begin{proof} We will begin by showing by induction that the $k^{th}$ derivative of 

$$ J(\tau) = {\scaleint{7ex}_{\bs \mathbb R^n}} \frac{e^{2\pi i\vec{\eta}\cdot (\vec{y}-\vec{y'}) }}{\displaystyle\int \limits_{\mathbb R^n} e^{4\pi [\vec{\eta}\cdot \vec{v} -\tau\tilde{b}(\vec{v})]} \, d\vec{v}} \, d\vec{\eta}$$

\noindent consists of sums of terms of the form 

$$ C {\scaleint{9ex}_{\bs \mathbb R^n}} \frac{e^{2\pi i\vec{\eta}\cdot (\vec{y}-\vec{y'}) }\left[\,\,\displaystyle\int_{\mathbb R^n}\widetilde{b}(\vec{v}) e^{4\pi [\vec{\eta}\cdot \vec{v} -\tau\tilde{b}(\vec{v})]} \, d\vec{v}\right]^{a_1}\cdots \left[\,\,\displaystyle\int_{\mathbb R^n}\widetilde{b}(\vec{v})^{k} e^{4\pi [\vec{\eta}\cdot \vec{v} -\tau\tilde{b}(\vec{v})]} \, d\vec{v}\right]^{a_k}}{\left[\,\,\displaystyle\int_{\mathbb R^n} e^{4\pi [\vec{\eta}\cdot \vec{v} -\tau\tilde{b}(\vec{v})]} \, d\vec{v}\right]^{d}} \, d\vec{\eta},$$ 

\noindent where $a_1 + \ldots + a_k = d-1;$ and $a_1 + 2a_2 +\ldots + ka_k = k.$ 

\bigskip

Notice that $J'(\tau) $ is of this form, with $a_1=1,$ and $d=2.$ Suppose $J^{(k)}(\tau)$ is of this form.  We will show that $J^{(k+1)}(\tau)$ is of this form. Let

$$g_s(\tau) = \left[\,\,\displaystyle\int \limits_{\mathbb R^n}\widetilde{b}(\vec{v})^{s} e^{4\pi [\vec{\eta}\cdot \vec{v} -\tau\tilde{b}(\vec{v})]} \, d\vec{v}\right]^{a_s}.$$

\noindent Then $\dfrac{d}{d\tau}\left[ J^{(k)}(\tau)\right]$ consists of sums of terms of the form 

\begin{equation}\label{eq:termino 1}
C \scaleint{7ex}_{\bs \mathbb{R}^n} \frac{e^{2\pi i\vec{\eta}\cdot (\vec{y}-\vec{y'}) }g_1\cdots g_{s-1}\frac{d}{d\tau}(g_s)g_{s+1}\cdots g_k}{\gamma}\, d\vec{\eta}
\end{equation}

\noindent or 

\begin{equation}\label{eq:termino 2}
C \scaleint{7ex}_{\bs \mathbb{R}^n} \frac{e^{2\pi i\vec{\eta}\cdot (\vec{y}-\vec{y'}) }g_1\cdots  g_k\frac{d}{d\tau}(\gamma)}{\gamma^2}\, d\vec{\eta},
\end{equation}

\noindent with $\gamma$ as in equation (\ref{eq:ga}). But 

$$ \frac{d}{d\tau}(g_s) = -4\pi a_s \left[\,\,\displaystyle\int_{\mathbb R^n}\widetilde{b}(\vec{v})^{s} e^{4\pi [\vec{\eta}\cdot \vec{v} -\tau\tilde{b}(\vec{v})]} \, d\vec{v}\right]^{a_s-1}\left[\,\,\displaystyle\int_{\mathbb R^n}\widetilde{b}(\vec{v})^{s+1} e^{4\pi [\vec{\eta}\cdot \vec{v} -\tau\tilde{b}(\vec{v})]} \, d\vec{v}\right],$$ 

\noindent and 

\begin{equation*}\begin{split} \frac{d}{d\tau}(\gamma) 
=&\, -4\pi d  \gamma^\frac{d-1}{d}  \displaystyle\int_{\mathbb R^n}\widetilde{b}(\vec{v}) e^{4\pi [\vec{\eta}\cdot \vec{v} -\tau\tilde{b}(\vec{v})]} \, d\vec{v}. \\
\end{split}\end{equation*}

\noindent Thus, a generic term of the form given in equation (\ref{eq:termino 1}) is given by

\begin{equation}\label{eq:111}
C \scaleint{7ex}_{\bs \mathbb{R}^n} \frac{e^{2\pi i\vec{\eta}\cdot (\vec{y}-\vec{y'}) }g_1\cdots g_k}{\gamma}\left[\,\,\displaystyle\int_{\mathbb R^n}\widetilde{b}(\vec{v})^{s} e^{4\pi [\vec{\eta}\cdot \vec{v} -\tau\tilde{b}(\vec{v})]} \, d\vec{v}\right]^{-1}\left[\,\,\displaystyle\int_{\mathbb R^n}\widetilde{b}(\vec{v})^{s+1} e^{4\pi [\vec{\eta}\cdot \vec{v} -\tau\tilde{b}(\vec{v})]} \, d\vec{v}\right]\, d\vec{\eta}.
\end{equation}

Let $h_j = g_j$ for $j \neq  s, s+1;$ $h_s =\left[\,\, \displaystyle\int_{\mathbb R^n}\widetilde{b}(\vec{v})^{s} e^{4\pi [\vec{\eta}\cdot \vec{v} -\tau\tilde{b}(\vec{v})]} \, d\vec{v}\right]^{\widetilde{a}_s},$ where $\widetilde{a}_s= a_s-1;$ and $h_{s+1} = \left[\,\, \displaystyle\int_{\mathbb R^n}\widetilde{b}(\vec{v})^{s+1} e^{4\pi [\vec{\eta}\cdot \vec{v} -\tau\tilde{b}(\vec{v})]} \, d\vec{v}\right]^{\widetilde{a}_{s+1}},$ where $\widetilde{a}_{s+1}= a_{s+1}+1.$ Then equation (\ref{eq:111}) can be written as 

\begin{equation*}
C \scaleint{7ex}_{\bs \mathbb{R}^n} \frac{e^{2\pi i\vec{\eta}\cdot (\vec{y}-\vec{y'}) }h_1\cdots h_k}{\gamma}\, d\vec{\eta}.
\end{equation*}

\noindent For this term to have the desired form, the exponents must satisfy $a_1+\ldots +a_{s-1}+\widetilde{a}_s + \widetilde{a}_{s+1}+ a_{s+2} + \ldots + a_k = d-1,$ and $a_1+\ldots +(s-1)a_{s-1}+s\widetilde{a}_s + (s+1)\widetilde{a}_{s+1}+ (s+2)a_{s+2} + \ldots + ka_k = k+1.$ The former holds, since by inductive hypothesis $a_1+\ldots +a_{s-1}+\widetilde{a}_s + \widetilde{a}_{s+1}+ a_{s+2} + \ldots + a_k  = a_1 +\ldots + a_s -1 + a_s+1 + \ldots +a_k = a_1+\ldots+a_k = d-1.$ The latter also holds, since by inductive hypothesis,  $a_1+\ldots +(s-1)a_{s-1}+s\widetilde{a}_s + (s+1)\widetilde{a}_{s+1}+ (s+2)a_{s+2} + \ldots + ka_k =  a_1+\ldots +(s-1)a_{s-1}+s{a}_s -s + (s+1){a}_{s+1} +s+1+ (s+2)a_{s+2} + \ldots + ka_k = a_1 +\ldots +ka_k +1 = k+1.$

\bigskip

 In the same way, a generic term of the form given in equation (\ref{eq:termino 2}) is given by
 
  \begin{equation}\label{eq:222}
  C \scaleint{8ex}_{\bs \mathbb{R}^n} \frac{e^{2\pi i\vec{\eta}\cdot (\vec{y}-\vec{y'}) }g_1\cdots  g_k  \displaystyle\int_{\mathbb R^n}\widetilde{b}(\vec{v}) e^{4\pi [\vec{\eta}\cdot \vec{v} -\tau\tilde{b}(\vec{v})]} \, d\vec{v}}{\left[\,\,\displaystyle\int_{\mathbb R^n} e^{4\pi [\vec{\eta}\cdot \vec{v} -\tau\tilde{b}(\vec{v})]} \, d\vec{v}\right]^{d+1}} \, d\vec{\eta}.
  \end{equation}

Let $h_j = g_j$ for $j \neq  1,$ and $h_1 =\left[\,\, \displaystyle\int_{\mathbb R^n}\widetilde{b}(\vec{v}) e^{4\pi [\vec{\eta}\cdot \vec{v} -\tau\tilde{b}(\vec{v})]} \, d\vec{v}\right]^{\widetilde{a}_1},$ where $\widetilde{a}_1= a_1+1$ so that equation (\ref{eq:222}) can be written as 

  \begin{equation*}
  C \scaleint{8ex}_{\bs \mathbb{R}^n} \frac{e^{2\pi i\vec{\eta}\cdot (\vec{y}-\vec{y'}) }h_1\cdots  h_k  }{\left[\,\,\displaystyle\int_{\mathbb R^n} e^{4\pi [\vec{\eta}\cdot \vec{v} -\tau\tilde{b}(\vec{v})]} \, d\vec{v}\right]^{d+1}} \, d\vec{\eta}.
  \end{equation*}

\noindent For this term to have the desired form, the exponents must satisfy $\widetilde{a}_1+a_2+\ldots  + a_k = d,$ and $\widetilde{a}_1+ 2a_2 +\ldots + ka_k = k+1.$ The former holds, since by inductive hypothesis, $\widetilde{a}_1+a_2+\ldots  + a_k = a_1 +\ldots+ a_k + 1 = (d-1)+1 = d.$ The latter also holds, since by inductive hypothesis $\widetilde{a}_1+ 2a_2 +\ldots + ka_k = a_1+2a_2+\ldots+ka_k + 1 = k+1.$ 

\bigskip

It follows that for any $k\in\mathbb{N},$ the $k^{th}$ derivative of $J(\tau)$ consists of sums of terms of the form 

$$ C  {\scaleint{8ex}_{\bs \mathbb R^n}} \frac{e^{2\pi i\vec{\eta}\cdot (\vec{y}-\vec{y'}) }\left[\,\,\displaystyle\int_{\mathbb R^n}\widetilde{b}(\vec{v}) e^{4\pi [\vec{\eta}\cdot \vec{v} -\tau\tilde{b}(\vec{v})]} \, d\vec{v}\right]^{a_1}\cdots \left[\,\,\displaystyle\int_{\mathbb R^n}\widetilde{b}(\vec{v})^{k} e^{4\pi [\vec{\eta}\cdot \vec{v} -\tau\tilde{b}(\vec{v})]} \, d\vec{v}\right]^{a_k}}{\left[\,\,\displaystyle\int_{\mathbb R^n} e^{4\pi [\vec{\eta}\cdot \vec{v} -\tau\tilde{b}(\vec{v})]} \, d\vec{v}\right]^{d}} \, d\vec{\eta},$$

\noindent where $a_1 + \ldots + a_k = d-1;$ and $a_1 + 2a_2 +\ldots + ka_k = k.$ 

\bigskip

Finally, since $F(\tau) = e^{-2\pi\tau\delta} J(\tau),$ the $N^{th}$ derivative of $F$ is given by

$$ F^{(N)}(\tau) = \sum_{k=0}^N{N\choose k} \left( e^{-2\pi\tau\delta}\right)^{(N-k)} J^{(k)}(\tau).$$

\noindent But $\left( e^{-2\pi\tau\delta}\right)^{(N-k)} = C \delta^{N-k} e^{-2\pi\tau\delta}.$ Thus, the $N^{th}$ derivative of $F$ consists of sums of multiples of terms of the form 

\begin{equation*}
 \delta^{N-k}e^{-2\pi\tau\delta}{\scaleint{8ex}_{\bs \mathbb R^n}} \frac{e^{2\pi i\vec{\eta}\cdot (\vec{y}-\vec{y'}) } \left[\,\,\displaystyle\int_{\mathbb R^n}\widetilde{b}(\vec{v}) e^{4\pi [\vec{\eta}\cdot \vec{v} -\tau\tilde{b}(\vec{v})]} \, d\vec{v}\right]^{a_1} \cdots  \left[\,\,\displaystyle\int_{\mathbb R^n}\widetilde{b}(\vec{v})^{k} e^{4\pi [\vec{\eta}\cdot \vec{v} -\tau\tilde{b}(\vec{v})]} \, d\vec{v}\right]^{a_k}}{\left[\,\,\displaystyle\int_{\mathbb R^n} e^{4\pi [\vec{\eta}\cdot \vec{v} -\tau\tilde{b}(\vec{v})]} \, d\vec{v}\right]^d} \, d\vec{\eta},
\end{equation*}

\noindent  where $a_1 + \ldots + a_k = d-1$ and $a_1 + 2a_2 +\ldots + ka_k = k.$ 

\bigskip

Finally, writing for $1\le s\le k,$

$$\left[\,\,\displaystyle\int \limits_{\mathbb R^n}\widetilde{b}(\vec{v})^{s} e^{4\pi [\vec{\eta}\cdot \vec{v} -\tau\tilde{b}(\vec{v})]} \, d\vec{v}\right]^{a_s} =\frac{1}{\tau^{sa_s}}\left[\,\,\displaystyle\int \limits_{\mathbb R^n}(\tau\widetilde{b}(\vec{v}))^{s} e^{4\pi [\vec{\eta}\cdot \vec{v} -\tau\tilde{b}(\vec{v})]} \, d\vec{v}\right]^{a_s}, $$

\noindent yields the desired expression. This finishes the proof of Claim \ref{cl:deriv de F}.

\end{proof}

\bigskip

\begin{claim}\label{bound de Delta}

Let 

 \begin{equation}
 \Delta_{N,k}^{\vec{\mu}}(\tau) =  (\tau\delta)^{N-k}e^{-2\pi\tau\delta} {\scaleint{7ex}_{\bs\mathbb R^n}} \frac{e^{2\pi i\frac{\vec{\eta}}{\vec{\mu}}\cdot (\vec{y}-\vec{y'}) }f_1^{\mu}(\tau,\vec{\eta}) \cdots f_k^{\mu}(\tau,\vec{\eta})}{\gamma^{\mu}(\tau,\vec{\eta})} \, d\vec{\eta},
\end{equation}

\noindent where

$$f_s^{\mu}(\tau,\vec{\eta}) = \left[\,\,\displaystyle\int \limits_{\mathbb R^n}(\tau\widetilde{b}(\vec{\mu}\vec{v}))^{s} e^{4\pi [\vec{\eta}\cdot \vec{v} -\tau\tilde{b}(\vec{\mu}\vec{v})]} \, d\vec{v}\right]^{a_s};$$

$$ \gamma^{\mu}(\tau,\vec{\eta}) = \left[\,\,\displaystyle\int \limits_{\mathbb R^n} e^{4\pi [\vec{\eta}\cdot \vec{v} -\tau\tilde{b}(\vec{\mu}\vec{v})]} \, d\vec{v}\right]^d;$$

\noindent $a_1,\ldots,a_k,k, d \in \mathbb{N};$ $0\le k\le N;$ $a_1 + \ldots + a_k = d-1;$ and $a_1 + 2a_2 +\ldots + ka_k = k.$ Then there exists a constant $C $ that depends only on $N,$ $k,$ the combined degree of $b$ and the dimension of the space such that

$$ |\Delta_{N,k}^{\vec{\mu}}(\tau)| \le C.$$

\end{claim}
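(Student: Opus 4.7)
The plan is to separate $\Delta_{N,k}^{\vec{\mu}}(\tau)$ into a scalar exponential factor and an $\vec{\eta}$-integral, and then argue, via the tools already developed in Section \ref{denominador}, that the integrand over $\vec{\eta}$ is a Schwartz-type function whose integral is universally bounded. First, factor out $(\tau\delta)^{N-k}e^{-2\pi\tau\delta}$: since the map $x \mapsto x^{N-k}e^{-2\pi x}$ is bounded on $[0,\infty)$ by a constant depending only on $N$ and $k$, this scalar factor contributes at most a universal constant. So the problem reduces to bounding
\[
\Bigl|\,\int_{\mathbb{R}^n}\frac{e^{2\pi i\,\frac{\vec{\eta}}{\vec{\mu}}\cdot(\vec{y}-\vec{y'})}\,f_1^\mu(\tau,\vec{\eta})\cdots f_k^\mu(\tau,\vec{\eta})}{\gamma^\mu(\tau,\vec{\eta})}\,d\vec{\eta}\,\Bigr|
\]
by a constant depending only on $N$, $k$, the combined degree of $b$, and $n$.

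The key observation is that the polynomial $g(\vec{v}) := \tau\tilde b(\vec{\mu}\vec{v})$ was \emph{chosen precisely} (via the length of the semi-axes of the John ellipsoid in Section \ref{mus}) so that it satisfies every hypothesis of Lemma \ref{lem:Schwartz2general m}: it is strictly convex, vanishes at $\vec{0}$ with vanishing gradient, it has combined degree $(m_1,\ldots,m_n)$, and $\{|\vec{v}|\le A\}\subseteq\{g(\vec{v})\le 1\}\subseteq\{|\vec{v}|\le 1\}$ with $A = (\sqrt{n}M)^{-1}$. Consequently all of the estimates derived in Section \ref{denominador} apply to $g$, with constants depending only on the combined degree and dimension.

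I would then bound the integrand pointwise. For the denominator, the analysis of Section \ref{denominador} (see equation (\ref{eq:numerador der})) gives $\gamma^\mu(\tau,\vec\eta)\gtrsim e^{d\,h(\vec{v_0})}\,|\{\vec w:f(\vec w)\le 1\}|^d$, where $h(\vec v)=\vec\eta\cdot\vec v-g(\vec v)$ and $\vec{v_0}$ is its maximizer. For each $f_s^\mu$, use Claim \ref{cl:bound g sin coefs en 2 dim} to bound $\bigl(\tau\tilde b(\vec\mu\vec v)\bigr)^s\le C(1+r(\vec v))^s$, expand into monomials, and apply Claim \ref{cl:bound de las derivadas} to each resulting monomial integral. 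This yields $|f_s^\mu|\lesssim e^{a_s h(\vec{v_0})}\,H_f(|\vec{v_0}|)^{a_s}$ times a polynomial factor of the type appearing in that claim. Multiplying the bounds for $f_1^\mu,\dots,f_k^\mu$ and using the crucial identity $a_1+\cdots+a_k=d-1$, the total exponential contribution in the numerator is $e^{(d-1)h(\vec{v_0})}$; after dividing by the $e^{d\,h(\vec{v_0})}$ factor coming from $\gamma^\mu$, a \emph{single} factor $e^{-h(\vec{v_0})}$ survives.

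Finally, I combine the ingredients: by Claim \ref{cl:leg en gen en 2 dim}, $e^{-h(\vec{v_0})}$ decays exponentially in $|\vec\eta|$, with a rate independent of the coefficients of $g$; meanwhile, the polynomial-type factors $H_f(|\vec{v_0}|)^{a_s}$ together with $|\{\vec w:f(\vec w)\le 1\}|^{-d}$ grow at most polynomially in $|\vec\eta|$ by Claims \ref{cl:cota de f}, \ref{cl:porte de v0 gen}, and \ref{cl: r final}. Therefore the integrand in $\vec\eta$ is dominated by an $\vec{\eta}$-Schwartz function whose decay depends only on $N$, $k$, the combined degree of $b$, and $n$. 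Integrating in $\vec\eta$ and multiplying by the bounded scalar $(\tau\delta)^{N-k}e^{-2\pi\tau\delta}$ yields the desired universal bound. The main technical obstacle is the bookkeeping in Step three: ensuring that the polynomial-in-$|\vec{v_0}|$ factors produced by Claim \ref{cl:bound de las derivadas} when bounding each $f_s^\mu$ compound into something that is still controlled by a single $e^{-h(\vec{v_0})}$; here the identities $a_1+\cdots+a_k=d-1$ and $a_1+2a_2+\cdots+ka_k=k$ built into the structure of derivatives of $F$ (Claim \ref{cl:deriv de F}) play the decisive role, exactly mirroring the cancellation used in the proof of the decay of derivatives of $\theta$ in Section \ref{denominador}.
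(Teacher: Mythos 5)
Your proposal follows essentially the same path as the paper's proof: factor out the bounded scalar $(\tau\delta)^{N-k}e^{-2\pi\tau\delta}$, reduce to bounding the $\vec\eta$-integral, observe that $g(\vec v)=\tau\tilde b(\vec\mu\vec v)$ satisfies the hypotheses of Lemma \ref{lem:Schwartz2general m} by construction of the John-ellipsoid factors, bound each $f_s^\mu$ via Claim \ref{cl:bound g sin coefs en 2 dim} plus the monomial-integral estimates of Claim \ref{cl:bound de las derivadas}, use equation (\ref{eq:numerador der}) for $\gamma^\mu$, cancel via $a_1+\cdots+a_k=d-1$ to leave a single surviving $e^{-h(\vec{v_0})}$, and invoke Claims \ref{cl:cota de f}, \ref{cl:porte de v0 gen}, \ref{cl: r final} together with the exponential decay from equation (\ref{eq:termino dominante}) to conclude. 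This is precisely the paper's argument; the only quibble is that the identity $a_1+2a_2+\cdots+ka_k=k$ has already been spent (in Claim \ref{cl:deriv de F}) extracting the $\tau^{-N}$ factor and is not needed again here — only $a_1+\cdots+a_k=d-1$ carries the weight in this claim.
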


\begin{remark}
Notice that $\mu$ is a function of $\tau.$ 
\end{remark}

\begin{proof} It is easy to check that $(\tau\delta)^{N-k}e^{-2\pi\tau\delta}$ is bounded. Thus, it suffices to show that 
$${\scaleint{7ex}_{\bs \mathbb R^n}} \frac{f_1^{\mu}(\tau,\vec{\eta}) \cdots f_k^{\mu}(\tau,\vec{\eta})}{\gamma^{\mu}(\tau,\vec{\eta})} \, d\vec{\eta}$$

\noindent is bounded. We will begin by studying the properties of the  $f_s^{\mu}(\tau,\vec{\eta})$  defined above. By Claim \ref{cl:bound g sin coefs en 2 dim} on page \pageref{cl:bound g sin coefs en 2 dim} there exists a universal constant such that

$$ \tau\widetilde{b}(\vec{\mu}\vec{v}) \le C(1 + v_1^{2m_1}+\ldots +v_n^{2m_n}).$$

\noindent Thus, we must study the behavior of integrals of the form 

$$ L_s = \int_{\mathbb{R}^n} p_s(\vec{v})e^{\vec{\eta}\cdot\vec{v} - g(\vec{v})}\,d\vec{v},$$

\noindent for polynomials $p_s:\mathbb{R}^n\rightarrow\mathbb{R}^+$ with non-negative coefficients and $g(\vec{v}) = \tau\tilde{b}(\vec{\mu}\vec{v}).$ As usual (see, e.g., equation (\ref{eq:def de I}) on page \pageref{eq:def de I}), we can write 

\begin{equation*}
L_s = e^{h(\vec{v_0})} \int_{\mathbb{R}^n} e^{h(\vec{v})-h(\vec{v_0})} p_s(\vec{v})\, d\vec{v},
\end{equation*}

\noindent where $h(\vec{v}) = \vec{\eta}\cdot \vec{v} - g(\vec{v})$ and $\vec{v_0}$ is the point where $h(\vec{v})$ attains its maximum. Making the change of variables $\vec{v} = \vec{w}+\vec{v_0},$ it follows that

$$ L_s= e^{h(\vec{v_0})} \int_{\mathbb{R}^n} e^{-f(\vec{w})}p_s(\vec{w}+\vec{v_0)} \, d\vec{w},$$

\noindent where $f(\vec{w})= g(\vec{v_0}+\vec{w}) -g(\vec{v_0}) - \nabla g(\vec{v_0})\cdot \vec{w}.$ Since the coefficients of $p$ are non-negative, it follows that 

$$p_s(w_1+v_{01}, \ldots, w_n+v_{0n}) \le p_s(|\vec{w}|+|\vec{v_0}|, \ldots, |\vec{w}|+|\vec{v_0}| ).$$

\noindent This last polynomial is now a polynomial of just one variable, and after expanding and regrouping all the terms, it consists of sums of terms of the form $|\vec{w}|^{j_s}|\vec{v_0}|^{i_s}$ for indices $i_s$ and $j_s.$ Hence, there exist positive coefficients $c_{i_s,j_s}$ such that

$$ L_s \le e^{h(\vec{v_0})} \sum_{i_s,j_s}c_{i_s,j_s}  |\vec{v_0}|^{i_s}\int_{\mathbb{R}^n}|\vec{w}|^{j_s}e^{-f(\vec{w})} \, d\vec{w}.$$

\noindent Let $J_{j_s} = \displaystyle\int_{\mathbb{R}^n}|\vec{w}|^{j_s}e^{-f(\vec{w})} \, d\vec{w}.$ This integral is identical to the one studied in equation (\ref{eq:jota s}) in Claim \ref{cl:bound de las derivadas} on page \pageref{eq:jota s}. Thus, 

$$J_{j_s} \lesssim  |\{\vec{w}\,:\, f(\vec{w})\le 1 \}|  \,  (1 + |\vec{v_0}|^{j_sB}) + \Theta,$$

\noindent where $\Theta$ is a constant that depends only on $m_1,\ldots,m_n$ and the dimension of the space; and $B =4 \max \{m_1,\ldots,m_n\}.$ It follows that

$$ L_s \lesssim e^{h(\vec{v_0})} \sum_{i_s,j_s}c_{i_s,j_s}  |\vec{v_0}|^{i_s} \left[ |\{\vec{w}\,:\, f(\vec{w})\le 1 \}|  \,  (1 + |\vec{v_0}|^{j_sB}) + \Theta\right].$$

\noindent That is, 

$$ L_s \lesssim e^{h(\vec{v_0})}\left[\,|\{\vec{w}\,:\, f(\vec{w})\le 1 \}|\,\phi_s(|\vec{v_0}|) +\psi_s(|\vec{v_0}|)\,\right],$$

\noindent for some polynomials $\phi_s:\mathbb{R}\rightarrow \mathbb{R}^+,$  $\psi_s:\mathbb{R}\rightarrow \mathbb{R}^+.$ 

\bigskip

On the other hand, by equation (\ref{eq:numerador der})

$$ \displaystyle\int \limits_{\mathbb R^n} e^{4\pi [\vec{\eta}\cdot \vec{v} -\tau\tilde{b}(\vec{\mu}\vec{v})]} \, d\vec{v} \approx e^{h(\vec{v_0})}|\{\vec{w}\,:\, f(\vec{w})\le 1 \}| .$$

\noindent Therefore, there exist some polynomials $q_j:\mathbb{R}\rightarrow\mathbb{R}^+$ such that

$${\scaleint{7ex}_{\bs\mathbb R^n}} \frac{f_1^{\mu}(\tau,\vec{\eta}) \cdots f_k^{\mu}(\tau,\vec{\eta})}{\gamma^{\mu}(\tau,\vec{\eta})} \, d\vec{\eta}\lesssim  {\scaleint{7ex}_{\bs\mathbb R^n}}\frac{[e^{h(\vec{v_0})}]^{a_1 + \ldots + a_k}\left( \sum_{j=0}^{a_1+\ldots+a_k} |\{\vec{w}\,:\, f(\vec{w})\le 1 \}|^j q_j(|\vec{v_0}|)\right)}{ [e^{h(\vec{v_0})}]^d |\{\vec{w}\,:\, f(\vec{w})\le 1 \}|^d}\, d\vec{\eta}.   $$

\noindent But  $a_1 + \ldots + a_k = d-1,$ so 

$${\scaleint{7ex}_{\bs \mathbb R^n}} \frac{f_1^{\mu}(\tau,\vec{\eta}) \cdots f_k^{\mu}(\tau,\vec{\eta})}{\gamma^{\mu}(\tau,\vec{\eta})} \, d\vec{\eta}\lesssim  {\scaleint{7ex}_{\bs\mathbb R^n}} e^{-h(\vec{v_0})}\left( \sum_{j=0}^{d-1} |\{\vec{w}\,:\, f(\vec{w})\le 1 \}|^{j-d} q_j(|\vec{v_0}|)\right)\, d\vec{\eta}.  $$

\noindent Moreover, by Claim \ref{cl:cota de f},

$$|\{\vec{w}:f(\vec{w}) \le 1 \}| \gtrsim (1+r(\vec{v_0}))^{-\frac{n}{2}}.$$ 

\noindent where $r(\vec{v}) = v_1^{2m_1}+\ldots + v_n^{2m_n}.$ Thus, and since $j-d <0,$ 

$${\scaleint{7ex}_{\bs \mathbb R^n}} \frac{f_1^{\mu}(\tau,\vec{\eta}) \cdots f_k^{\mu}(\tau,\vec{\eta})}{\gamma^{\mu}(\tau,\vec{\eta})} \, d\vec{\eta}\lesssim  {\scaleint{7ex}_{\bs\mathbb R^n}} e^{-h(\vec{v_0})}\left( \sum_{j=0}^{d-1} (1+r(\vec{v_0}))^{\frac{n(d-j)}{2}} q_j(|\vec{v_0}|)\right)\, d\vec{\eta}.  $$

\noindent By Claims \ref{cl: r final} and \ref{cl:porte de v0 gen}, $\sum_{j=0}^{d-1} (1+r(\vec{v_0}))^{\frac{n(d-j)}{2}} q_j(|\vec{v_0}|)$ is at most of polynomial growth in $|\vec{\eta}|.$ On the other hand, by equation (\ref{eq:termino dominante}), $e^{-h(\vec{v_0})}$ decays exponentially in $|\vec{\eta}|.$ Hence, 

$${\scaleint{6ex}_{\bs \mathbb R^n}} \frac{f_1^{\mu}(\tau,\vec{\eta}) \cdots f_k^{\mu}(\tau,\vec{\eta})}{\gamma^{\mu}(\tau,\vec{\eta})} \, d\vec{\eta}$$

\noindent is bounded. This finishes the proof of Claim \ref{bound de Delta}.

\end{proof}

\begin{corollary}\label{cor:fint}
Let 

\begin{equation}
F(\tau) =   e^{-2\pi\tau\delta} {\scaleint{7ex}_{\bs \mathbb R^n}} \frac{e^{2\pi i\vec{\eta}\cdot (\vec{y}-\vec{y'}) }}{\displaystyle\int_{\mathbb R^n} e^{4\pi [\vec{\eta}\cdot \vec{v} -\tau\tilde{b}(\vec{v})]} \, d\vec{v}} \, d\vec{\eta}.
\end{equation}

\noindent Then $F \in L^1(\mathbb{R}).$ 
\end{corollary}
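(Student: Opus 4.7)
The plan is to reduce the $L^{1}$ claim for $F$ to the convergence estimate that was already established in the proof of Proposition \ref{thm: S original 1}. Note that $F(\tau)$ is well-defined only for $\tau>0$ (since the denominator integral diverges for $\tau\leq 0$), so we interpret the statement as $F\in L^{1}(0,\infty)$ extended by zero. The key observation is that, up to changes of variable, $|F(\tau)|$ is bounded by exactly the integrand that was shown to be integrable in the first bound of the Main Theorem.

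Concretely, I would first mimic the setup of Section \ref{main theorem}: introduce the John factors $\mu_{1},\ldots,\mu_{n}$ associated with the region $\{\vec{v}:\tau\tilde{b}(\vec{v})\le 1\}$, perform the substitution $\vec{v}\to\vec{\mu}\vec{v}$ inside the denominator integral, and then the substitution $\vec{\eta}\to\vec{\eta}/\vec{\mu}$ in the outer integral. This rewrites
\begin{equation*}
F(\tau)=\frac{e^{-2\pi\tau\delta}}{\mu_{1}^{2}\cdots\mu_{n}^{2}}\int_{\mathbb{R}^{n}}\frac{e^{2\pi i\vec{\eta}\cdot((\vec{y}-\vec{y'})/\vec{\mu})}}{\int_{\mathbb{R}^{n}}e^{4\pi[\vec{\eta}\cdot\vec{v}-\tau\tilde{b}(\vec{\mu}\vec{v})]}\,d\vec{v}}\,d\vec{\eta}.
\end{equation*}
By construction of the $\mu_{i}$, the polynomial $g(\vec{v})=\tau\tilde{b}(\vec{\mu}\vec{v})$ satisfies all the hypotheses of Lemma \ref{lem:Schwartz2general m}, so the reciprocal of the inner denominator integral is a Schwartz function whose decay is independent of $\tau$ and of the coefficients of $b$. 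Crudely bounding the oscillatory factor by $1$ and integrating this Schwartz function over $\mathbb{R}^{n}$ gives a constant, so
\begin{equation*}
|F(\tau)|\lesssim\frac{e^{-2\pi\tau\delta}}{\mu_{1}^{2}\cdots\mu_{n}^{2}}\lesssim\frac{e^{-2\pi\tau\delta}}{\bigl|\{\vec{v}:\tilde{b}(\vec{v})\le 1/\tau\}\bigr|^{2}},
\end{equation*}
using the identification $\mu_{1}\cdots\mu_{n}\approx|\{\vec{v}:\tilde{b}(\vec{v})\le 1/\tau\}|$ from equation (\ref{eq:eleccion mus}).

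The remaining task is to integrate this pointwise bound in $\tau$ over $(0,\infty)$, and this is precisely the computation carried out in the proof of Proposition \ref{thm: S original 1}: a dyadic decomposition of $(0,\infty)$ according to the size of $\tau\delta$, combined with Claim \ref{cl:B-M} from the Appendix to compare the volumes $|\{\vec{v}:\tilde{b}(\vec{v})\le\delta/2^{j+1}\}|$ across scales, yields a finite total integral bounded by a constant multiple of $\delta^{-1}|\{\vec{v}:\tilde{b}(\vec{v})\le\delta\}|^{-2}$. The main obstacle in this argument is the potential blow-up of $|\{\vec{v}:\tilde{b}(\vec{v})\le 1/\tau\}|^{-2}$ as $\tau\to\infty$, which is tamed by the exponential factor $e^{-2\pi\tau\delta}$ (this is where the strict convexity of $b$, and hence $\delta>0$, is essential); the behaviour near $\tau=0^{+}$ is harmless since the volume grows there and its reciprocal squared decays. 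Hence $F\in L^{1}(\mathbb{R})$, and the corollary follows.
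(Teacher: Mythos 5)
Your proposal is correct and follows essentially the same route as the paper: both introduce the John factors $\vec{\mu}$, rescale $\vec{v}$ and $\vec{\eta}$ to land on the normalized polynomial $\tau\tilde b(\vec{\mu}\vec{v})$ satisfying the hypotheses of Lemma \ref{lem:Schwartz2general m}, bound the inner integral by a constant, and obtain $|F(\tau)|\lesssim e^{-2\pi\tau\delta}\,|\{\vec{v}:\tilde b(\vec{v})\le 1/\tau\}|^{-2}$. The only (inessential) difference is in the final integration in $\tau$: you invoke the dyadic decomposition from the proof of Proposition \ref{thm: S original 1}, while the paper uses the simpler split $\tau\le 1$ versus $\tau>1$ together with Claim \ref{cl:B-M} to control the volume factor for large $\tau$ — both yield the same finiteness.
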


\begin{proof}

Making the change of variables $\vec{\eta} \rightarrow \frac{\vec{\eta}}{\vec{\mu}}$ and $\vec{v} \rightarrow \vec{\mu}\vec{v},$ we have that

\begin{equation}\label{eq:fconmu}
F(\tau) =  \frac{e^{-2\pi\tau\delta}}{\mu_1^2\cdots\mu_n^2} {\scaleint{7ex}_{\bs \mathbb R^n}} \frac{e^{2\pi i \frac{\vec{\eta}}{\vec{\mu}}\cdot (\vec{y}-\vec{y'}) }}{\displaystyle\int_{\mathbb R^n} e^{4\pi [\vec{\eta}\cdot \vec{v} -\tau\tilde{b}(\vec{\mu}\vec{v})]} \, d\vec{v}} \, d\vec{\eta}.
\end{equation}

\noindent Taking $k=N = 0,$ and $d=1,$ it follows from the proof of Claim \ref{bound de Delta} that 

\begin{equation*}
\left\vert{\scaleint{7ex}_{\bs \mathbb R^n}} \frac{e^{2\pi i \frac{\vec{\eta}}{\vec{\mu}}\cdot (\vec{y}-\vec{y'}) }}{\displaystyle\int_{\mathbb R^n} e^{4\pi [\vec{\eta}\cdot \vec{v} -\tau\tilde{b}(\vec{\mu}\vec{v})]} \, d\vec{v}} \, d\vec{\eta}\right\vert
\end{equation*} 

\noindent is bounded. Thus, and by equation (\ref{eq:eleccion mus}), it follows that 

$$| F(\tau) | \lesssim  \frac{e^{-2\pi\tau\delta}}{\left|\left\{ \vec{v}\,:\, \widetilde{b}(\vec{v}) \le \frac{1}{\tau} \right\}\right|^2}.$$

If $\tau \le 1, $ then $| F(\tau) |$ is bounded. If $\tau >1,$ it follows from Claim \ref{cl:B-M} (see Appendix) that 

$$| F(\tau) | \lesssim  \frac{e^{-2\pi\tau\delta}\tau^{2n}}{\left|\left\{ \vec{v}\,:\, \widetilde{b}(\vec{v}) \le 1 \right\}\right|^2},$$ 

\noindent which decays exponentially as $\tau \rightarrow \infty.$

\end{proof}

\bigskip

We are now ready to present the proof of Proposition \ref{thm: S original con t}.

\begin{proof}
We had shown in equation (\ref{eq:con im}) on page \pageref{eq:con im} that

 \begin{equation*}\begin{split}
S = &\,
\int_0^{\infty}e^{- i u\tau}   F(\tau)\, d\tau,\\
\end{split}\end{equation*}

\noindent where $u =2\pi w,$ and 

\begin{equation}\label{eq:FF}
F(\tau) =   e^{-2\pi\tau\delta} {\scaleint{7ex}_{\bs \mathbb R^n}} \frac{e^{2\pi i\vec{\eta}\cdot (\vec{y}-\vec{y'}) }}{\displaystyle\int_{\mathbb R^n} e^{4\pi [\vec{\eta}\cdot \vec{v} -\tau\tilde{b}(\vec{v})]} \, d\vec{v}} \, d\vec{\eta}.
\end{equation}

\noindent Then by Corollary \ref{cor:fint} and Claim \ref{cl:truco de wiener} it follows that

 \begin{equation*}\begin{split}
 |S| \le&\, \sum_{j=0}^{N+1} c_j \left\vert \int_0^{\frac{\pi}{|u|}} e^{-iu\tau} F\left( \tau +\frac{j\pi}{|u|}\right) \, d\tau \right\vert\\
 &\, + \frac{1}{2^{N+1}} \left\vert  \int_{\frac{\pi}{|u|}}^{\infty} e^{-iu\tau} \int_0^{\frac{\pi}{|u|}}\cdots\int_0^{\frac{\pi}{|u|}}F^{(N+1)}\left(\tau + s_1+\ldots+ s_{N+1}  \right)\,ds_1\,\cdots\,ds_{N+1}\,d\tau \right\vert. \\
\end{split}\end{equation*}

\noindent We will begin by obtaining the desired bound for the terms of the form
 $$I_j(u)=  \scaleint{6ex}_{\bs 0}^{\frac{\pi}{|u|}} e^{-iu\tau} F\left( \tau +\frac{j\pi}{|u|}\right) \, d\tau.$$
 
\noindent We can write

 $$|I_j| \le  \scaleint{7ex}_{\bs 0}^{\frac{\pi}{|u|}} \left\vert F\left( \tau +\frac{j\pi}{|u|}\right)  \right\vert \, d\tau = \scaleint{7ex}_{\bs 0}^{\frac{\pi}{|u|}}\left\vert  e^{-2\pi\delta\left( \tau +\frac{j\pi}{|u|}\right) }{\scaleint{7ex}_{\bs \mathbb R^n}} \frac{e^{2\pi i\vec{\eta}\cdot (\vec{y}-\vec{y'}) }}{\displaystyle\int \limits_{\mathbb R^n} e^{4\pi [\vec{\eta}\cdot \vec{v} -\left(\tau+\frac{j\pi}{|u|}\right)\tilde{b}(\vec{v})]} \, d\vec{v}} \, d\vec{\eta}\right\vert\,d\tau.$$

\noindent Making the change of variables $s = \tau +\frac{j\pi}{|u|}$ as well as $\vec{\eta} \rightarrow \frac{\vec{\eta}}{\vec{\mu}}$ and $\vec{v} \rightarrow \vec{\mu}\vec{v},$ we have that

\begin{equation*}
|I_j(u)| \le \scaleint{7ex}_{\bs \frac{j\pi}{|u|}}^{\frac{(j+1)\pi}{|u|}} \frac{e^{-2\pi\delta s}}{\mu_1^2\cdots\mu_n^2} {\scaleint{7ex}_{\bs \mathbb R^n}} \frac{1}{\displaystyle\int_{\mathbb R^n} e^{4\pi [\vec{\eta}\cdot \vec{v} -s\tilde{b}(\vec{\mu}\vec{v})]} \, d\vec{v}} \, d\vec{\eta}\, ds. 
\end{equation*}

\noindent By Lemma \ref{lem:Schwartz2general m}, 

$$ {\scaleint{7ex}_{\bs \mathbb R^n}} \frac{1}{\displaystyle\int_{\mathbb R^n} e^{4\pi [\vec{\eta}\cdot \vec{v} -s\tilde{b}(\vec{\mu}\vec{v})]} \, d\vec{v}} \, d\vec{\eta}$$

\noindent converges. Also, by convexity of $b,$ $\delta> 0.$ Thus, 

\begin{equation*}
|I_j(u)| \lesssim \scaleint{6ex}_{\bs \frac{j\pi}{|u|}}^{\frac{(j+1)\pi}{|u|}} \frac{1}{\mu_1^2\cdots\mu_n^2} \, ds. 
\end{equation*}

\noindent By the choice of the factors $\vec{\mu},$ it follows that 

\begin{equation*}
|I_j(u)| \lesssim \scaleint{7ex}_{\bs \frac{j\pi}{|u|}}^{\frac{(j+1)\pi}{|u|}} \frac{1}{\left|\left\{ \vec{v}\,:\, \widetilde{b}(\vec{v}) \le \frac{1}{\tau} \right\}\right|^2}\, d\tau.
\end{equation*}

\noindent But since we are considering $\tau \le \frac{(j+1)\pi}{|u|},$ then

\begin{equation*}
|I_j(u)| \lesssim \scaleint{6ex}_{\bs \frac{j\pi}{|u|}}^{\frac{(j+1)\pi}{|u|}} \frac{1}{\left|\left\{ \vec{v}\,:\, \widetilde{b}(\vec{v}) \le \frac{|u|}{\pi(j+1)} \right\}\right|^2}\, d\tau =  \frac{\pi}{|u|\left|\left\{ \vec{v}\,:\, \widetilde{b}(\vec{v}) \le \frac{|u|}{\pi(j+1)} \right\}\right|^2}.
\end{equation*}

\noindent Moreover, since $u =2\pi w,$ the desired bound follows immediately for $j=0$ and $j=1.$  For $j>1,$ it follows from Claim \ref{cl:B-M} (see Appendix) that

\begin{equation*}
\left\vert\left\{ \vec{v}\,:\, \widetilde{b}(\vec{v}) \le |w| \right\}\right\vert \le \frac{(j+1)^n}{2^n}\left\vert\left\{ \vec{v}\,:\, \widetilde{b}(\vec{v}) \le  \frac{2|w|}{(j+1)} \right\}\right\vert.
\end{equation*}

\noindent Hence, 

\begin{equation*}
|I_j(w)| \lesssim   \frac{(j+1)^{2n}}{|w|\left|\left\{ \vec{v}\,:\, \widetilde{b}(\vec{v}) \le |w| \right\}\right|^2}.
\end{equation*}

\noindent Since the sum over $j$ is finite, it follows that 

 \begin{equation}\label{eq:S con 2 partes}\begin{split}
& |S| \lesssim \frac{1}{|w|\left|\left\{ \vec{v}\,:\, \widetilde{b}(\vec{v}) \le |w| \right\}\right|^2} \\
 &\, + \frac{1}{2^{N+1}} \left\vert  \int_{\frac{\pi}{|u|}}^{\infty} e^{-iu\tau} \int_0^{\frac{\pi}{|u|}}\cdots\int_0^{\frac{\pi}{|u|}}F^{(N+1)}\left(\tau + s_1+\ldots+ s_{N+1}  \right)\,ds_1\,\cdots\,ds_{N+1}\,d\tau \right\vert. \\
\end{split}\end{equation}

\bigskip

 We must now bound the term 

$$\left\vert  \int_{\frac{\pi}{|u|}}^{\infty} e^{-iu\tau} \int_0^{\frac{\pi}{|u|}}\cdots\int_0^{\frac{\pi}{|u|}}F^{(N+1)}\left(\tau + s_1+\ldots+ s_{N+1}  \right)\,ds_1\,\cdots\,ds_{N+1}\,d\tau \right\vert.$$

\noindent As previously discussed, the $(N+1)^{th}$ derivative of $F(\tau)$ consists of sums of terms of the form 

\begin{equation*}
\frac{C(\tau\delta)^{N+1-k}e^{-2\pi\tau\delta}}{\tau^{N+1}} {\scaleint{7ex}_{\bs \mathbb R^n}} \frac{e^{2\pi i\vec{\eta}\cdot (\vec{y}-\vec{y'}) }f_1(\tau,\vec{\eta}) \cdots f_k(\tau,\vec{\eta})}{\gamma(\tau,\vec{\eta})} \, d\vec{\eta},
\end{equation*}

\noindent where $f_s(\tau,\vec{\eta}) = \left[\,\,\displaystyle\int \limits_{\mathbb R^n}(\tau\widetilde{b}(\vec{v}))^{s} e^{4\pi [\vec{\eta}\cdot \vec{v} -\tau\tilde{b}(\vec{v})]} \, d\vec{v}\right]^{a_s};$ $ \gamma(\tau,\vec{\eta}) = \left[\,\,\displaystyle\int \limits_{\mathbb R^n} e^{4\pi [\vec{\eta}\cdot \vec{v} -\tau\tilde{b}(\vec{v})]} \, d\vec{v}\right]^d;$ $a_1,\ldots,a_k,k, d \in \mathbb{N};$ $0\le k\le N+1;$ $a_1 + \ldots + a_k = d-1;$ and $a_1 + 2a_2 +\ldots + ka_k = k.$ These terms can be written as $ C\tau^{-(N+1)} \Delta_{N+1,k}(\tau),$ where

\begin{equation*}
\Delta_{N+1,k}(\tau) = (\tau\delta)^{N+1-k}e^{-2\pi\tau\delta} {\scaleint{7ex}_{\bs \mathbb R^n}} \frac{e^{2\pi i\vec{\eta}\cdot (\vec{y}-\vec{y'}) }f_1(\tau,\vec{\eta}) \cdots f_k(\tau,\vec{\eta})}{\gamma(\tau,\vec{\eta})} \, d\vec{\eta}.
\end{equation*}

\noindent Thus, we must study integrals of the form

$$J =  \scaleint{6ex}_{\bs \frac{\pi}{|u|}}^{\infty} \frac{e^{-iu\tau}}{\tau^{N+1}} \int_0^{\frac{\pi}{|u|}}\cdots\int_0^{\frac{\pi}{|u|}}\Delta_{N+1,k}\left(\tau + s_1+\ldots+ s_{N+1}  \right)\,ds_1\,\cdots\,ds_{N+1}\,d\tau.
 $$
 
\noindent With $\mu_1,\ldots,\mu_n$ chosen as in equation (\ref{eq:eleccion mus}) on page \pageref{eq:eleccion mus} we can make the changes of variable $\vec{\eta}\rightarrow \frac{\vec{\eta}}{\vec{\mu}}$ and $\vec{v}\rightarrow \vec{\mu}\vec{v}.$ We obtain an integral of the form 
 
 $$J = \scaleint{6ex}_{\bs \frac{\pi}{|u|}}^{\infty}  \frac{(\mu_1\cdots\mu_n)^{a_1+\ldots+a_k}}{\tau^{N+1}(\mu_1\cdots\mu_n)^{d+1}}  \int_0^{\frac{\pi}{|u|}}\cdots\int_0^{\frac{\pi}{|u|}}\Delta_{N+1,k}^{\vec{\mu}}\left(\tau + s_1+\ldots+ s_{N+1}  \right)\,ds_1\,\cdots\,ds_{N+1}\,d\tau,$$
 
 \noindent where now
 
 \begin{equation}
 \Delta_{N+1,k}^{\vec{\mu}}(\tau) =  (\tau\delta)^{N+1-k}e^{-2\pi\tau\delta}{\scaleint{6ex}_{\bs \mathbb R^n}} \frac{e^{2\pi i\frac{\vec{\eta}}{\vec{\mu}}\cdot (\vec{y}-\vec{y'}) }f_1^{\mu}(\tau,\vec{\eta}) \cdots f_k^{\mu}(\tau,\vec{\eta})}{\gamma^{\mu}(\tau,\vec{\eta})} \, d\vec{\eta}
\end{equation}

\noindent is as in Claim \ref{bound de Delta}. Thus, $| \Delta_{N+1,k}^{\vec{\mu}}(\tau)|\le C.$ It follows that

 $$|J| \lesssim \scaleint{6ex}_{\bs \frac{\pi}{|u|}}^{\infty}  \frac{(\mu_1\cdots\mu_n)^{a_1+\ldots+a_k}}{|u|^{N+1}\tau^{N+1}(\mu_1\cdots\mu_n)^{d+1}} \,d\tau.$$
 
 \noindent Also, $a_1 + \ldots +a_k  =d-1.$ Thus, 
 
  $$|J| \lesssim \frac{1}{|u|^{N+1}} \scaleint{6ex}_{\bs \frac{\pi}{|u|}}^{\infty}  \frac{1}{\tau^{N+1}(\mu_1\cdots\mu_n)^{2}} \,d\tau.$$

 \noindent Since $u = 2\pi w,$ and by choice of $\vec{\mu},$ it follows that

    $$|J| \lesssim \frac{1}{|w |^{N+1}} \scaleint{6ex}_{\bs \frac{1}{2|w|}}^{\infty}  \frac{1}{\tau^{N+1}\left|\left\{ \vec{v}\,:\, \widetilde{b}(\vec{v}) \le \frac{1}{\tau} \right\}\right|^2} \,d\tau.$$

 \noindent  Notice that on the interval under consideration, $(2|w|\tau)^{-1} \le 1.$ Using Claim \ref{cl:B-M}, it follows that 
    
   $$  \left|\left\{ \vec{v}\,:\, \widetilde{b}(\vec{v}) \le \frac{1}{\tau} \right\}\right| = \left|\left\{ \vec{v}\,:\, \widetilde{b}(\vec{v}) \le \frac{2|w|}{2|w|\tau} \right\}\right| \ge \frac{1}{(2|w|\tau)^n }     \left|\left\{ \vec{v}\,:\, \widetilde{b}(\vec{v}) \le 2|w| \right\}\right|.$$ 
   
   \noindent Thus, and since $\left|\left\{ \vec{v}\,:\, \widetilde{b}(\vec{v}) \le 2|w| \right\}\right| \ge \left|\left\{ \vec{v}\,:\, \widetilde{b}(\vec{v}) \le |w| \right\}\right| $ It follows that  
   
      $$  \left|\left\{ \vec{v}\,:\, \widetilde{b}(\vec{v}) \le \frac{1}{\tau} \right\}\right| \ge \frac{1}{(2|w|\tau)^n }     \left|\left\{ \vec{v}\,:\, \widetilde{b}(\vec{v}) \le |w| \right\}\right|.$$ 
   
   \noindent Taking $N \ge 2n+1,$ we have that
   
   \begin{equation*}\begin{split}    
       |J| \lesssim&\, \frac{|w|^{2n}}{|w |^{N+1}} \scaleint{6ex}_{\bs \frac{1}{2|w|}}^{\infty}  \frac{\tau^{2n}}{\tau^{N+1}\left|\left\{ \vec{v}\,:\, \widetilde{b}(\vec{v}) \le |w| \right\}\right|^2} \,d\tau \approx  \frac{1}{|w |^{N+1-2n}\left|\left\{ \vec{v}\,:\, \widetilde{b}(\vec{v}) \le |w| \right\}\right|^2} \cdot \tau^{2n-N} \biggr\vert_\frac{1}{2|w|}^{\infty}.\\
        \end{split}\end{equation*}

\noindent That is, 
  
  $$ |J| \lesssim \frac{1}{\left\vert w\right\vert  \left\vert \left\{\vec{v}\,:\,\tilde{b}(\vec{v}) \le |w|\right\}\right\vert^2}.$$

\noindent It follows from equation (\ref{eq:S con 2 partes}) that 
   
   $$ |S| \lesssim \frac{1}{\left\vert w\right\vert  \left\vert \left\{\vec{v}\,:\,\tilde{b}(\vec{v}) \le |w|\right\}\right\vert^2}.$$
   
  \noindent This finishes the proof of our third and last bound. 

\end{proof}

\bigskip

\subsubsection{Conclusion}\label{conclusion}

We have shown that 

\begin{equation*}
\left\vert S\left((\vec{x},\vec{y},t);(\vec{x'},\vec{y'},t')\right)\right\vert \lesssim \min{\{A,B,C\}},
\end{equation*}

\noindent where

\begin{equation*}
A=\frac{1}{\delta | \{ \vec{v} \,:\, \tilde{b}(\vec{v}) <\delta \} |^2};
\end{equation*}

\begin{equation*}
B= \frac{1}{ \widetilde{b}(\vec{y}-\vec{y'})| \{ \vec{v} \,:\, \tilde{b}(\vec{v}) < \widetilde{b}(\vec{y}-\vec{y'})\} |^2};
\end{equation*}

\noindent and

\begin{equation*}
C=\frac{1}{ |w| \,\,| \{ \vec{v} \,:\, \widetilde{b}(\vec{v}) <|w| \} |^2}.
\end{equation*}

\noindent Thus, to conclude the proof of the Main Theorem, it suffices to show that 

$$ \min\{A,B,C\} \lesssim \frac{1}{\sqrt{\delta^2+ \widetilde{b}(\vec{y}-\vec{y'})^2+ w^2 }\left| \left\{ \vec{v} \,:\, \tilde{b}(\vec{v}) < \sqrt{\delta^2+ \widetilde{b}(\vec{y}-\vec{y'})^2+ w^2 }\right\} \right|^2}.$$

\bigskip

Without loss of generality, suppose that $\delta \le \widetilde{b}(\vec{y}-\vec{y'}) \le |w|.$ Then, 

$$ \frac{1}{|w|} \le \frac{\sqrt{3}}{\sqrt{\delta^2+ \widetilde{b}(\vec{y}-\vec{y'})^2+ w^2 } }.$$ 

\noindent and

$$\left\{ \vec{v} \,:\, \tilde{b}(\vec{v}) < \sqrt{\delta^2+ \widetilde{b}(\vec{y}-\vec{y'})^2+ w^2 }\right\} \subseteq \left\{ \vec{v} \,:\, \tilde{b}(\vec{v}) < \sqrt{3}|w| \right\}.$$

\noindent But by Claim \ref{cl:B-M}, 

$$\left|\left\{ \vec{v} \,:\, \tilde{b}(\vec{v}) < \sqrt{3}|w| \right\}\right|^2 \le (\sqrt{3})^{2n}\left|\left\{ \vec{v} \,:\, \tilde{b}(\vec{v}) < |w| \right\}\right|^2. $$

\noindent Therefore, 

\begin{equation}\label{eq:ultima}\begin{split}
|S| \lesssim &\, \frac{1}{ |w| \,\,| \{ \vec{v} \,:\, \widetilde{b}(\vec{v}) <|w| \} |^2}\\ 
\le&\,  \frac{(\sqrt{3})^{2n+1}}{\sqrt{\delta^2+ \widetilde{b}(\vec{y}-\vec{y'})^2+ w^2 }\left| \left\{ \vec{v} \,:\, \tilde{b}(\vec{v}) < \sqrt{\delta^2+ \widetilde{b}(\vec{y}-\vec{y'})^2+ w^2 }\right\} \right|^2}.\\
\end{split}\end{equation}

\noindent This finishes the proof of the Main Theorem.

\section{Appendix}

We have included in this appendix two technical claims for convex functions that are used repeatedly throughout the paper. 

\begin{claim}\label{cl:B-M}
 Let $f:\mathbb{R}^n \rightarrow \mathbb{R}$ be a convex function such that $f(\vec{0})=0.$ Given $x>0,$ let 
 
 $$ A_x = \{ \vec{w}\in\mathbb{R}^n\,:\, f(\vec{w})\le x\}.$$
 
 \noindent Then for any constant $0\le \lambda \le 1,$ $ vol(A_{\lambda x}) \ge \lambda^n vol(A_x).$
\end{claim}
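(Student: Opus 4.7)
The plan is to show the stronger set-theoretic inclusion $\lambda A_x \subseteq A_{\lambda x}$, from which the volume bound follows immediately since dilation by $\lambda$ scales $n$-dimensional Lebesgue measure by $\lambda^n$.

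To establish the inclusion, I would take any $\vec{w} \in A_x$ and examine $f(\lambda \vec{w})$. Writing $\lambda \vec{w} = \lambda \vec{w} + (1-\lambda)\vec{0}$ and applying convexity of $f$ together with the hypothesis $f(\vec{0})=0$, I get
\begin{equation*}
f(\lambda \vec{w}) \le \lambda f(\vec{w}) + (1-\lambda) f(\vec{0}) = \lambda f(\vec{w}) \le \lambda x,
\end{equation*}
so $\lambda \vec{w} \in A_{\lambda x}$. Hence $\lambda A_x \subseteq A_{\lambda x}$, and consequently $\mathrm{vol}(A_{\lambda x}) \ge \mathrm{vol}(\lambda A_x) = \lambda^n \, \mathrm{vol}(A_x)$.

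There is no real obstacle here: the only ingredients are convexity of $f$, the normalization $f(\vec{0})=0$, and the standard scaling behavior of Lebesgue measure under dilations. The argument does not even require $f$ to be nonnegative, finite everywhere, or continuous beyond what convexity guarantees; in particular, $A_x$ need not be bounded for the inclusion itself to hold, though the volume statement is vacuous or immediate when $\mathrm{vol}(A_x) = \infty$.
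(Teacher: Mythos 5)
Your proof is correct and follows essentially the same route as the paper: both establish the inclusion $\lambda A_x \subseteq A_{\lambda x}$ by applying convexity with $\vec{u} = \vec{0}$ and then invoke the $\lambda^n$ scaling of Lebesgue measure under dilation.
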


\begin{proof}
By convexity of $f,$ for any vectors $\vec{w}$ and $\vec{u}$ in $\mathbb{R}^n,$ and any constant $0\le \lambda\le 1,$

$$ f( \lambda\vec{w} + (1-\lambda)\vec{u}) \le \lambda f(\vec{w})+(1-\lambda)f(\vec{u}).$$

\noindent In particular, taking $\vec{u}=\vec{0},$ and since by hypothesis $f(\vec{0})=0,$ it follows that $ f( \lambda\vec{w}) \le \lambda f(\vec{w}).$ Thus, if $\vec{w}\in A_x,$ then $ f(\lambda \vec{w}) \le \lambda f(\vec{w}) \le \lambda x.$ That is, $\lambda\cdot A_x \subseteq A_{\lambda x}.$ It follows that 

$$  vol(A_{\lambda x}) \ge vol(\lambda\cdot A_x)  = \lambda^n vol(A_x).$$

\end{proof}

\begin{claim}\label{prop:brunn mink}

If $f:\mathbb{R}^n \rightarrow \mathbb{R}$ is a convex function such that $f(\vec{0})=0$ and $\nabla f(\vec{0}) = 0$ then

$$ I = \int_{\mathbb{R}^n}e^{-f(\vec{w})}\,d\vec{w} \approx |\{ \vec{w} \, : \, f(\vec{w}) \le 1 \}|.$$

\end{claim}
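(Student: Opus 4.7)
My plan is to reduce both inequalities to the level-set bound of Claim \ref{cl:B-M} via the layer-cake decomposition. First note that by convexity, $f(\vec{0})=0$, and $\nabla f(\vec{0})=\vec{0}$, we have $f(\vec{w})\ge 0$ for every $\vec{w}\in\mathbb{R}^n$, so the integrand $e^{-f(\vec{w})}$ lies in $(0,1]$. Writing $A_s=\{\vec{w}\,:\,f(\vec{w})\le s\}$ and applying the layer-cake formula (with the change of variables $s=-\ln t$) I would obtain
$$I=\int_{\mathbb{R}^n}e^{-f(\vec{w})}\,d\vec{w}=\int_0^\infty |A_s|\,e^{-s}\,ds.$$

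Next I would use Claim \ref{cl:B-M} to pin the function $s\mapsto |A_s|$ to its value at $s=1$. Taking $x=1$ and $\lambda=s$ in that claim yields $|A_s|\ge s^n|A_1|$ for $0\le s\le 1$, while taking $x=s\ge 1$ and $\lambda=1/s$ gives $|A_1|\ge s^{-n}|A_s|$, i.e.\ $|A_s|\le s^n|A_1|$ for $s\ge 1$. Combined with the trivial bound $|A_s|\le |A_1|$ for $s\le 1$, these two estimates sandwich $|A_s|$ between $\min(s^n,1)|A_1|$ and $\max(s^n,1)|A_1|$.

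Plugging the upper sandwich bound into the layer-cake representation gives
$$I\le |A_1|\left(\int_0^1 e^{-s}\,ds+\int_1^\infty s^n e^{-s}\,ds\right)=C_n|A_1|,$$
while the lower sandwich bound gives
$$I\ge\int_0^1 s^n|A_1|e^{-s}\,ds=C_n'|A_1|,$$
with $C_n,C_n'>0$ constants depending only on the dimension. Both inequalities together yield $I\approx |\{\vec{w}\,:\,f(\vec{w})\le 1\}|$, as desired. There is no real obstacle here: the only substantive ingredient is Claim \ref{cl:B-M}, and the role of the assumption $\nabla f(\vec{0})=\vec{0}$ is simply to guarantee nonnegativity of $f$ so that the layer-cake integral runs over $[0,\infty)$ with $|A_s|$ finite whenever $|A_1|$ is.
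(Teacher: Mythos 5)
Your proof is correct and essentially mirrors the paper's argument: both hinge entirely on Claim \ref{cl:B-M}, with your continuous layer-cake decomposition $I=\int_0^\infty |A_s|e^{-s}\,ds$ playing the role of the paper's discrete shell decomposition over $\{j\le f\le j+1\}$. The only minor difference is that the paper's lower bound comes from the trivial estimate $\int_{\{f\le 1\}}e^{-f}\ge e^{-1}|A_1|$ rather than a second application of Claim \ref{cl:B-M}, but this is cosmetic.
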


\begin{proof}
 Without loss of generality we can assume $f \not\equiv 0.$ Notice that under these hypothesis $f(\vec{v}) \ge 0\,\, \forall \vec{v} \in \mathbb{R}^n.$ A lower bound for $I$ can be easily obtained, since

$$\int_{\mathbb{R}^n}e^{-f(\vec{w})}\,d\vec{w} \ge \int_{\{\vec{w}\, : \, f(\vec{w}) \le 1 \}}e^{-f(\vec{w})}\,d\vec{w} \ge \frac{1}{e} \, |\{ \vec{w} \,:\, f(\vec{w}) \le 1 \}|.$$

\bigskip

\noindent To obtain an upper bound we can write 

\begin{equation}\label{eq:descomposicion}
\int_{\mathbb{R}^n}e^{-f(\vec{w})}\,d\vec{w} = \int_{\{\vec{w}\, : \, f(\vec{w}) \le 1 \}}e^{-f(\vec{w})}\,d\vec{w} \, + \, \sum_{j=1}^\infty \int_{\{\vec{w}\,:\, j \le f(\vec{w}) \le j+1\}} e^{-f(\vec{w})} \, d\vec{w}. 
\end{equation}

\noindent But 

$$ \int_{\{\vec{w}\,:\, j \le f(\vec{w}) \le j+1\}} e^{-f(\vec{w})} \, d\vec{w} \le e^{-j} \, |\{\vec{w} \, : \, f(\vec{w}) \le j+1 \} |.$$

\bigskip

\noindent But by Claim \ref{cl:B-M}, and taking $\lambda  = \frac{1}{j+1}$ and $x=j+1,$ it follows that 

$$ |\{ \vec{w}\,:\, f(\vec{w}) \le j+1 \}| \le (j+1)^n |\{ \vec{w}\,:\, f(\vec{w}) \le 1 \}|.$$

\noindent Hence, by equation (\ref{eq:descomposicion}), we have that

$$ I \le |\{ \vec{w} \, : \, f(\vec{w}) \le 1\}| \left( 1+\sum_{j=1}^\infty e^{-j}(j+1)^n\right).$$

\noindent Since the sum converges we get the desired upper bound. 

\end{proof}

\section{Acknowledgments} 
The results presented in this paper first appeared in my Ph.D. thesis \cite{Be14}.
I would like to thank my advisor Alexander Nagel for his guidance and contributions to this work. I would also like to thank an anonymous referee for many insightful comments and suggestions. This work was partially supported by a doctoral fellowship from CONICYT (Chile).


\end{document}